\documentclass[11pt,a4paper]{amsart}
\usepackage[left=3.5cm,right=3.5cm,top=3.5cm,bottom=3.5cm]{geometry}
\usepackage{enumerate}
\usepackage{leftidx}
\usepackage{mathtools}
\usepackage{amsmath,amscd,amssymb,amsthm}
\usepackage[arrow,matrix]{xy}
\usepackage[OT2,T1]{fontenc}
\usepackage{graphicx,setspace}
\usepackage{hyperref}

\onehalfspacing

\DeclareMathOperator{\gal}{Gal}

\DeclareMathOperator{\im}{Im}

\DeclareMathOperator{\GL}{GL}

\theoremstyle{definition}
\newtheorem{definition}{Definition}[section]

\newtheorem{remark}[definition]{Remark}
\newtheorem*{remark*}{Remark}

\theoremstyle{plain}
\newtheorem{theorem}[definition]{Theorem}
\newtheorem{corollary}[definition]{Corollary}
\newtheorem{lemma}[definition]{Lemma}
\newtheorem{proposition}[definition]{Proposition}
\newtheorem{conjecture}[definition]{Conjecture}

\newcommand{\F}{{ \mathbb F }}
\newcommand{\N}{{ \mathbb N }}
\newcommand{\Q}{{ \mathbb Q }}
\newcommand{\Z}{{ \mathbb Z }}

\newcommand{\p}{{ \mathfrak p }}
\newcommand{\m}{{ \mathfrak m }}
\renewcommand{\O}{{ \mathcal O }}

\author[A. Ferraguti]{Andrea Ferraguti}
\address{ICMAT, Campus de Cantoblanco, 13-15 Calle de Nicol\'as Cabrera, 28049 Madrid, Spain}
\email{and.ferraguti@gmail.com}

\author[C. Pagano]{Carlo Pagano}
\address{Max Planck Institute for Mathematics, Vivatsgasse 7, 53111 Bonn, Germany
}
\email{carlein90@gmail.com}

\title{Constraining images of quadratic arboreal representations}
\keywords{Arithmetic dynamics, arboreal Galois representations, global fields.}
\subjclass[2010]{Primary  37P55, 20E08, 20E18; Secondary 14G05.}

\begin{document}

\begin{abstract}
In this paper, we prove several results on finitely generated dynamical Galois groups attached to quadratic polynomials. First we show that, over global fields, quadratic post-critically finite polynomials are precisely those having an arboreal representation whose image is topologically finitely generated. To obtain this result, we also prove the quadratic case of Hindes' conjecture on dynamical non-isotriviality. Next, we give two applications of this result. On the one hand, we prove that quadratic polynomials over global fields with abelian dynamical Galois group are necessarily post-critically finite, and we combine our results with local class field theory to classify quadratic pairs over $\Q$ with abelian dynamical Galois group, improving on recent results of Andrews and Petsche. On the other hand we show that several infinite families of subgroups of the automorphism group of the infinite binary tree cannot appear as images of arboreal representations of quadratic polynomials over number fields, yielding unconditional evidence towards Jones' finite index conjecture. 

\end{abstract}

\maketitle

\section{Introduction}

One of the central tasks in modern Arithmetic Dynamics is to understand images of arboreal Galois representations or, more in general, dynamical Galois groups. This is still a rather mysterious topic even in the first non-trivial case, which is that of quadratic polynomials. A great amount of interest on the topic has risen in recent years, as witnessed by the numerous papers on the topic, such as \cite{ahmad,anderson,andrews,boston,ferra2,ferra4,ferra1,ferra3,ferra5,jones_survey,juul}.

In the present paper we prove several results which give constraints on the image of the arboreal representation associated to a quadratic polynomial over a global field. Let us start by briefly recalling the setting: $K$ is a field of characteristic not 2, $f=(x-a)^2-b\in K[x]$ and $\alpha\in K$. Throughout the paper, we will denote by $f^n$ the $n$-fold \emph{iteration} of $f$, rather than the $n$-th power. For every $n\geq 1$, we will denote by $K_n(f,\alpha)$ the splitting field of the polynomial $f^n(x)-\alpha$ and by $G_n(f,\alpha)$ its Galois group. Moreover, we will let $K_\infty(f,\alpha)\coloneqq \varinjlim_n K_n(f,\alpha)$ and $G_\infty(f,\alpha)=\varprojlim_n G_n(f,\alpha)$ the Galois group of $K_\infty(f,\alpha)$. The group $G_\infty(f,\alpha)$ is commonly referred to, in the literature, as the \emph{dynamical Galois group} attached to the pair $(f,\alpha)$. When $f^n-\alpha$ is separable for every $n\geq 1$, one can associate to the pair $(f,\alpha)$ a so-called \emph{arboreal Galois representation}, namely a continuous homomorphism $\rho_{f,\alpha}\colon \gal(K^{\text{sep}}/K)\to \Omega_\infty$, where $\Omega_\infty$ is the group of automorphisms of the infinite, rooted, regular binary tree (see for example \cite{jones_survey} for more details). Clearly, in this case we have that $G_\infty(f,\alpha)\cong \im(\rho_{f,\alpha})$ as topological groups. In order not to confuse the reader, we will use the notation $G_\infty(f,\alpha)$ when there will be no separability assumption, while $\im(\rho_{f,\alpha})$ will be reserved to cases in which we assume every $f^n-\alpha$ to be separable.

Although certain basic results on arboreal representations, such as some of those contained in \cite{ferra5,stoll}, hold for any base field of characteristic not 2, when $K$ is a global field things become much more rigid. In this framework, post-critically finite polynomials (PCF for short) play a crucial role. Recall that a quadratic polynomial $f=(x-a)^2-b$ is called \emph{post-critically finite} if the orbit of $a$ is finite, and \emph{post-critically infinite} otherwise. It is a well known fact that arboreal representations of quadratic PCF polynomials have topologically finitely generated image, which is implied by the fact that in this case the pro-$2$-extension $K_\infty(f,\alpha)$ turns out to be ramified only at finitely many primes. In fact, $f$ is PCF if and only if $K_\infty(f,\alpha)$ ramifies at finitely many primes (see \cite{bridy} for a more general result).

Our first result (Theorem \ref{PCF_equivalence}) shows that for quadratic polynomials over global fields of characteristic not 2 the following four conditions are equivalent:
\begin{enumerate}
\item $f$ is PCF;
\item $K_\infty(f,\alpha)$ ramifies at finitely many primes;
\item $\im(\rho_{f,\alpha})$ is topologically finitely generated;
\item the sub-$\F_2$-vector space of $K^{*}/K^{* 2}$ generated by the set $\{f^n(a)-\alpha\}_{n\geq 1}$ is finite dimensional.
\end{enumerate}
The proof of (3)$\implies$(4) uses part of the results of \cite{ferra5}. In order to prove (4)$\implies$(1) when the base field has positive characteristic, we prove (Theorem \ref{isotriviality}) the quadratic case of Hindes' conjecture on dynamical non-isotriviality \cite[Conjecture 3.1]{hindes1}.

The next two sections of the paper are devoted to apply Theorem \ref{PCF_equivalence} in the context of  Jones' Finite Index Conjecture \cite[Conjecture 3.11]{jones_survey}. This conjecture aims to characterize quadratic rational functions over global fields whose arboreal representation has infinite index image inside $\Omega_\infty$. When one restricts to quadratic polynomials over number fields, the conjecture reads in the following way.
\begin{conjecture} \label{Jones conjecture}
Let $K$ be a number field, and let $f\in K[x]$ be a quadratic polynomial such that $f^n$ is separable for every $n$. Then $\im(\rho_{f,0})$ has infinite index in $\Omega_\infty$ if and only if $f$ is PCF or $0$ is periodic for $f$.
\end{conjecture}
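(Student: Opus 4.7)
The conjecture breaks into two directions. For the \emph{if} direction---PCF or $0$-periodic implies infinite index---I can give a complete argument using Theorem \ref{PCF_equivalence} together with classical facts about $\Omega_\infty$. For the PCF case, Theorem \ref{PCF_equivalence} says $\im(\rho_{f,0})$ is topologically finitely generated. On the other hand, the abelianization $\Omega_\infty^{\ab}$ surjects onto the elementary abelian pro-$2$ group $\prod_{n\ge 1} C_2$ via the sign characters of the leaf permutation at each level, so $\Omega_\infty$ itself is not topologically finitely generated. Since a closed finite-index subgroup of a profinite group $G$ is topologically finitely generated if and only if $G$ itself is, $\im(\rho_{f,0})$ cannot have finite index in $\Omega_\infty$.

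For the $0$-periodic case, let $m$ be the minimal period of $0$ under $f$. The vertex $0$ then appears at level $m$ of the preimage tree of $0$ and is Galois-fixed (since $0 \in K$). Moreover, the subtree rooted at this copy of $0$ is canonically identified with the whole preimage tree via $v \mapsto v$, and this identification intertwines the Galois action. Under the decomposition $\Omega_\infty \cong (\Omega_\infty)^{2^m} \rtimes \Omega_m$ and writing $\rho_{f,0}(\sigma) = ((g_1,\ldots,g_{2^m}), h)$ with $h$ fixing the index $i_0$ of the subtree at $0$, the self-similarity forces the fixed-point equation $g_{i_0} = \rho_{f,0}(\sigma)$. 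Truncating both sides at level $k$ of the subtree, the fraction of $\Omega_{m+k}$ satisfying this constraint is $1/|\Omega_k|$, which tends to $0$ as $k \to \infty$. Hence $\im(\rho_{f,0})$ has vanishing Haar measure in $\Omega_\infty$ and, being closed, must have infinite index.

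For the \emph{only if} direction, assume $f$ is not PCF and $0$ is not periodic. The standard strategy, going back to Stoll, controls the index of $\im(\rho_{f,0})|_{n+1}$ inside its maximal possible value given $\im(\rho_{f,0})|_n$ via the square classes in $K_n(f,0)^{*}/K_n(f,0)^{*2}$ of certain products of iterates $f^k(a)$, twisted by orbit data of $0$. The non-PCF and non-periodic hypotheses ensure these square classes are generically non-trivial, so one expects the image to be open. Making this rigorous uniformly in $n$ is the main obstacle: it requires simultaneous control over the prime factorizations of the critical orbit and of the $0$-orbit, via inputs such as effective equidistribution, the non-isotriviality statement of Theorem \ref{isotriviality}, or $abc$-type inequalities. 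This is precisely why the full conjecture remains open, and the present paper instead attacks it indirectly by ruling out explicit infinite families of candidate finite-index images of $\rho_{f,0}$.
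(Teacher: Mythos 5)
The statement you were given is labeled a conjecture, and the paper does not prove it: it records that the ``if'' direction is known, citing \cite[Section 3]{jones_survey}, and explicitly states that the converse remains unproven (with only conditional partial results \cite{juul}). Your split---a full argument for the ``if'' direction plus an honest acknowledgment that the ``only if'' direction is open---is therefore exactly consistent with the paper's stance, and your sketch of the Stoll-type strategy for the converse is a fair description of why it is hard; just be clear that this last part is not a proof and was never claimed to be one.

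On the ``if'' direction, your arguments are correct but differ in emphasis from the standard route. The PCF case is fine: Theorem \ref{PCF_equivalence} gives topological finite generation of $\im(\rho_{f,0})$, the abelianization $\Omega_\infty^{\ab}\cong\prod_{n\geq 1}\F_2$ shows $\Omega_\infty$ is not topologically finitely generated, and a topologically finitely generated subgroup of finite (hence open) index would force $\Omega_\infty$ to be topologically finitely generated, since one may adjoin finitely many coset representatives; note that the paper's Proposition \ref{Profinite Schreier's lemma} is the \emph{converse} implication of the equivalence you invoke, and the direction you actually use is the elementary one. Your $0$-periodic argument is correct but overbuilt. The self-similarity is genuine: the subtree rooted at the vertex $0$ at level $m$ consists of the literal sets $f^{-k}(0)$, so the identification intertwines the Galois action and your fixed-point equation $g_{i_0}=\rho_{f,0}(\sigma)$ holds. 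But you do not need Haar measure: iterating the fixed-point equation shows that every element of the image fixes a vertex at level $jm$ for every $j\geq 1$ (since $0\in f^{-jm}(0)$), so $\im(\rho_{f,0})$ lies in a vertex stabilizer of index $2^{jm}$ in $\Omega_\infty$ for every $j$, and the index is infinite. Your measure computation also has a harmless slip: the fraction of $\Omega_{m+k}$ satisfying your constraint is $2^{-m}/\lvert\Omega_k\rvert$ rather than $1/\lvert\Omega_k\rvert$, the extra factor coming from requiring $h$ to fix $i_0$; since you only need the fraction to tend to zero, the conclusion stands.
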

It is worth remarking that one direction of Conjecture \ref{Jones conjecture} is already known to be true: if $f$ is PCF or $0$ is periodic for $f$, then $[\Omega_\infty:\im(\rho_{f,0})]=\infty$ (see \cite[Section 3]{jones_survey}). The converse statement remains unproven, although there are partial results towards it \cite{juul}, conditional on abc and Vojta's conjectures.

In Section \ref{sec:abelian} we turn our attention to the case of \emph{abelian} subgroups of $\Omega_{\infty}$. In \cite[Proposition 7.3]{ferra5} we proved that elements $\sigma$ of $\Omega_{\infty}$ swapping the $2$ halves of the tree do not commute with any other automorphism of the tree having image in $\Omega_{\infty}^{\text{ab}}$ linearly independent from that of $\sigma$. In \cite{ferra5} we used this result, among other tools, to prove \cite[Theorem C]{ferra5}, which gives that any two distinct maximal closed subgroups of $\Omega_\infty$ are not isomorphic as profinite groups. Indeed this result indicates, in a precise sense, that often elements of $\Omega_{\infty}$ do not commute. So, as we show in Proposition \ref{abelian subgroups are of infinite index}, one can immediately apply \cite[Proposition 7.3]{ferra5} to prove that abelian subgroups of $\Omega_{\infty}$ have infinite index therein. Hence each of them is within the realm of Conjecture \ref{Jones conjecture}. Our idea in the present work is to use  \cite[Proposition 7.3]{ferra5} as a tool to systematically detect constraints on abelian arboreal images. Namely we completely prove this case of  Conjecture \ref{Jones conjecture} in the following result. 
\begin{theorem} \label{thm 4}
Conjecture \ref{Jones conjecture} holds whenever $\im(\rho_{f,0})$ is abelian. In particular, if $\im(\rho_{f,0})$ is abelian then $f$ is post-critically finite. Hence, an abelian closed subgroup of $\Omega_{\infty}$ can be an arboreal image over a number field only if it is topologically finitely generated.
\end{theorem}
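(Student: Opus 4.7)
My plan starts with a reduction. By the equivalence (1) $\Leftrightarrow$ (3) in Theorem \ref{PCF_equivalence}, $f$ is PCF if and only if $G := \im(\rho_{f,0})$ is topologically finitely generated; hence the ``in particular'' clause reduces to showing that abelianness of $G$ forces topological finite generation. Once that is established, the first sentence of the theorem follows by combining with the known direction (PCF or $0$-periodic $\Rightarrow$ infinite index) recalled in the introduction, and the ``Hence'' clause is an immediate restatement via the same equivalence.

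To attack topological finite generation in the abelian case, the key tool is \cite[Proposition 7.3]{ferra5}, which I would apply recursively at every internal vertex $v$ of the binary tree: inside the subtree rooted at $v$, any element swapping the two children of $v$ can commute only with elements whose image in the abelianization of the subtree-stabilizer at $v$ is $\F_2$-proportional to its own. Abelianness of $G$ therefore forces, at each $v$, a severe rigidification of the ``swap content'' of $G$. I would then translate this group-theoretic rigidity into arithmetic content via the recursive Kummer description $K_{n+1} = K_n(\{\sqrt{b + \beta_i}\})$ (with the $\beta_i$ ranging over the roots of $f^n$) together with the symmetric-function identity $\prod_{i} (b + \beta_i) = f^{n+1}(a) \in K^{*}$: the hope is that the rigidity forces the classes $[b + \beta_i] \in K_n^{*}/K_n^{* 2}$ to descend uniformly to $K^{*}/K^{* 2}$, so that the $\F_2$-span of $\{f^n(a)\}_{n \geq 1}$ in $K^{*}/K^{* 2}$ is finite-dimensional, and (4) $\Rightarrow$ (1) of Theorem \ref{PCF_equivalence} concludes.

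The main obstacle is precisely this translation: abelianness alone does not force topological finite generation of a closed subgroup of $\Omega_\infty$. For example, the closed subgroup generated by the pairwise commuting involutions $s_n \in \Omega_\infty$ which ``flip the $n$-th coordinate of every leaf'' is closed, abelian, and isomorphic to $\F_2^\N$, hence not topologically finitely generated. The proof must therefore genuinely exploit the arithmetic setup — that $G$ is the image of a quadratic arboreal representation over a number field — and I expect local class field theory applied to each completion $K_{\infty,\p}/K_\p$, together with the fact that ramification in the arboreal tower is controlled by the sequence $\{f^n(a)\}$, to supply the missing input that rules out such pathological abelian profinite groups and yields the desired finite-dimensionality.
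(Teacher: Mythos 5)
Your reduction via (1)$\Leftrightarrow$(3) of Theorem \ref{PCF_equivalence} is sound, your identification of \cite[Proposition 7.3]{ferra5} as the key tool is correct, and your counterexample (the closed subgroup generated by the coordinate-flip involutions, which is abelian and isomorphic to $\F_2^{\N}$) is valid and genuinely shows that no purely group-theoretic argument can work. But the proposal stops exactly where the proof has to happen: the translation from the commutation rigidity to finite-dimensionality of $\langle f^n(a)\rangle$ in $K^{*}/K^{*2}$ is stated as a ``hope,'' and your guess for the missing arithmetic input is misdirected. Local class field theory plays no role in this theorem; in the paper it enters only later (Lemma \ref{class_field}, inside Theorem \ref{thm_abelian}) to classify the abelian pairs over $\Q$. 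The arithmetic engine behind Theorem \ref{thm 4} is Faltings'/Samuel's theorem, packaged inside the implication (4)$\Rightarrow$(1) of Theorem \ref{PCF_equivalence} — an ingredient your plan never brings to bear on the finiteness question.

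The paper closes the gap with a one-shot, not recursive, application of \cite[Proposition 7.3]{ferra5}, preceded by a base change that your sketch is missing. Assume $f$ is post-critically infinite; then $G_\infty(f,\alpha)$ is infinite by Lemma \ref{finiteness}, so some $f^m-\alpha$ has a root $\beta\notin K$. Over $K(\beta)$ every $f^n-\beta$ is separable (the post-critical orbit lies in $K$), and by Remark \ref{subquotient} the group $G_\infty(f,\beta)$ is a nontrivial abelian subquotient of $G_\infty(f,\alpha)$. Picking a faithful node $\gamma$ (Remark \ref{first non-trivial}) and invoking Proposition \ref{abelian subgroups are of infinite index}(1) forces the image of $G_\infty(f,\gamma)$ in $\Omega_\infty^{\text{ab}}$ to be $1$-dimensional. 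There is then no need for your recursive Kummer descent of the classes $[b+\beta_i]$ from $K_n^{*}/K_n^{*2}$ down to $K^{*}/K^{*2}$ — a step that, as stated, you have no mechanism to carry out: Proposition \ref{powerful_corollary} already expresses containment in each maximal subgroup $M_v$ as the square condition $\prod_{n}c_{n,\gamma}^{v_n}\in K(\gamma)^{*2}$ over the base field, so $1$-dimensionality of the abelianized image immediately yields finite-dimensionality of $\langle c_{n,\gamma}\rangle_{n\geq 1}$ modulo squares. Applying (4)$\Rightarrow$(1) of Theorem \ref{PCF_equivalence} over $K(\gamma)$ — where $f$ is still post-critically infinite — produces the contradiction. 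In short: rather than forcing the square classes to descend to $K$, one base-changes once to the field where the rigidity lives and runs the Faltings argument there; this also explains why the result holds over all global fields (using Theorem \ref{isotriviality} in positive characteristic), not just number fields.
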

In fact Theorem \ref{thm 4} holds in the more general setting of dynamical Galois groups over global fields, cf.\ Theorem \ref{abelian_image}.

It is worth noticing that Theorem \ref{thm 4} fits very naturally in the philosophy according to which post-critically finite polynomials are dynamical analogues of elliptic curves with complex multiplication. This way of thinking is supported by several existing results, such as \cite{benedetto2,ghioca}, and of course also by Conjecture \ref{Jones conjecture}, which would be an analogue of the celebrated Serre's Open Image Theorem. Theorem \ref{thm 4} can be compared to the following standard fact in the theory of elliptic curves (see for example \cite[Theorem III.7.9, Remark III.7.10]{silverman}): if $K$ is a number field, $E/K$ is an elliptic curve, $\ell$ is a prime and $\rho_{E,\ell}\colon \gal(\overline{K}/K)\to \GL_2(\Q_\ell)$ is the Galois representation on the $\ell$-adic Tate module of $E$, then $\im(\rho_{E,\ell})$ is an abelian subgroup of $\GL_2(\Q_\ell)$ if and only if $E$ has complex multiplication.

Theorem \ref{thm 4} naturally raises the following question. 

\begin{quote}
Which topologically finitely generated closed abelian subgroups of $\Omega_{\infty}$ can be obtained as arboreal images over some global field?
\end{quote}

This has been posed for number fields in a recent preprint by Andrews and Petsche \cite{andrews}. There, the authors propose a conjecture \cite[Conjecture 1]{andrews} that characterizes pairs $(f,\alpha)$, where $f$ is a polynomial over a number field and $\alpha\in K$, such that $K_\infty(f,\alpha)$ is an abelian extension of $K$. In \cite{andrews}, they prove their conjecture for \emph{quadratic stable pairs} $(f,\alpha)$ in $\Q$; this amounts to say that $f\in \Q[x]$ is a quadratic polynomial, $\alpha\in \Q$ and $f^n-\alpha$ is irreducible for every $n$. Their proof relies on ideas from Arakelov theory and requires some computer-assisted numerical verifications. 

Here we provide a completely alternative route, which gives at once a stronger result and a one-page proof. Our approach consists of combining local considerations, building on a result of Anderson et al.\ \cite{anderson}, with Theorem \ref{thm 4}. The upshot is that we establish \cite[Conjecture 1]{andrews} for any quadratic pair $(f,\alpha)$ over $\Q$, without requiring any stability assumption. Recall that if $K$ is a field, $f,g\in K[x]$ and $\alpha,\beta\in K$, we say that the pair $(f,\alpha)$ is \emph{conjugate} to $(g,\beta)$ if there exist $u,v\in K$ with $u\neq 0$ such that $u^{-1}f(ux+v)-v/u=g(x)$ and $u\beta+v=\alpha$. Clearly, if this happens then $G_\infty(f,\alpha)\cong G_\infty(g,\beta)$. Our result (Theorem \ref{thm_abelian}) is the following.

\begin{theorem}
Let $f\in \Q[x]$ be a monic, quadratic polynomial and let $\alpha \in \Q$ be non-exceptional for $f$. Then $G_\infty(f,\alpha)$ is abelian if and only if $(f,\alpha)$ is $\Q$-conjugate to either $(x^2,\pm1)$ or $(x^2-2,\beta)$, where $\beta\in \{\pm 2,\pm 1,0\}$.
\end{theorem}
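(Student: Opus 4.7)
The plan is to handle the two directions separately, exploiting throughout that over $\Q$ abelianness of a Galois extension amounts, by Kronecker--Weber, to containment in $\Q^{\text{cyc}}$.

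For the $(\Leftarrow)$ direction, each listed pair is verified directly. For $(x^2,\pm 1)$, the splitting field of $x^{2^n}\mp 1$ is contained in $\Q(\zeta_{2^{n+1}})$. For $(x^2-2,\beta)$ with $\beta\in\{\pm 2,\pm 1,0\}$, writing $\beta=2\cos\theta$ with $\theta\in\{0,\pi,\pi/3,2\pi/3,\pi/2\}$ and using the Chebyshev identity $f(2\cos\phi)=2\cos 2\phi$, the roots of $f^n(x)-\beta$ take the form $2\cos((\theta+2\pi k)/2^n)$, manifestly lying in a cyclotomic field over $\Q$. In both cases $K_\infty(f,\alpha)\subseteq \Q^{\text{cyc}}$ is abelian.

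For $(\Rightarrow)$, assume $G_\infty(f,\alpha)$ is abelian. Theorem \ref{thm 4} then forces $f$ to be PCF. Since $f$ is monic quadratic over $\Q$, the $\Q$-conjugation that centers the critical point of $f$ at the origin brings $f$ into the form $x^2+c$ with $c\in\Q$; and the classical classification of rational parameters in the Mandelbrot set for which $0$ is preperiodic forces $c\in\{0,-1,-2\}$.

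The cases $c=0$ and $c=-2$ are handled by Kronecker--Weber. For $c=0$, non-exceptionality gives $\alpha\neq 0$; the requirement $\sqrt[2^n]{\alpha}\in\Q^{\text{cyc}}$ for every $n$ is very restrictive. Already at $n=2$ it narrows $\alpha$ to $\{\pm 1,\pm 4\}\cdot(\Q^*)^4$, and iterating (or invoking the local-at-$2$ obstruction that $\Q_2(\sqrt[4]{2})/\Q_2$ is not Galois, hence $\sqrt[4]{2}\notin\Q^{\text{ab}}$) forces $\alpha\in\{\pm 1\}$. For $c=-2$, writing $\alpha=2\cos\theta$, the iterated Chebyshev preimages $2\cos((\theta+2\pi k)/2^n)$ must all be cyclotomic, which forces $\theta/\pi\in\Q$; since $\alpha\in\Q$, Niven's theorem on rational cosines at rational multiples of $\pi$ then yields $\alpha\in\{0,\pm 1,\pm 2\}$.

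The main obstacle is the exclusion of $c=-1$. Here the second-level splitting field $\Q\bigl(\sqrt{1+\sqrt{1+\alpha}},\sqrt{1-\sqrt{1+\alpha}}\bigr)$ has Galois closure generically $D_4$ over $\Q$, but one must exclude the a priori possible special values of $\alpha$ for which this collapses to an abelian group and then propagate the argument to all of $K_\infty$. This is where the local input of Anderson et al.\ \cite{anderson} is used: after localizing at a suitable prime (with $p=2$ being the natural choice in view of the branching of the period-$2$ critical orbit $0\to -1\to 0$), abelianness of $G_\infty$ together with local class field theory forces abelian local arboreal images, which the \cite{anderson} machinery rules out for every non-exceptional $\alpha\in\Q$. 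Combining this with the cases above completes the proof.
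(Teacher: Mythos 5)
You have the right skeleton (Theorem \ref{thm 4} forces PCF, the classification $c\in\{0,-1,-2\}$, then case analysis), but the case $c=-1$ --- which is the actual heart of the theorem --- is not proved; it is delegated to ``the \cite{anderson} machinery'' in a way that cannot work as described. First, $p=2$ is precisely the prime you \emph{cannot} use: Theorem \ref{poonen} is stated only for odd residue characteristic, and Lemma \ref{class_field} genuinely fails at $p=2$, since $\Q_2(\zeta_{2^\infty})/\Q_2$ is an infinitely ramified \emph{abelian} pro-$2$-extension; the proof of Lemma \ref{class_field} needs $1+\m$ to be a $\Z_p$-module with $p$ odd, so that a pro-$2$ quotient of inertia factors through the finite group $(\O_k/\m)^*$. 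Second, even at odd primes the hypotheses of Theorem \ref{poonen} do \emph{not} hold for every non-exceptional $\alpha\in\Q$: for $f=x^2-1$ one needs either $v_p(\alpha)<0$ or $\alpha\equiv 0,-1 \pmod{\p}$ with $\alpha$ outside the orbit $\{0,-1\}$, and both conditions fail at every odd $p$ exactly when $\alpha=\pm 2^j$. This is why the paper's proof, after reducing to $\alpha=\pm2^j$, must pass to $\Q(\sqrt{\alpha+1})$ with basepoint $\sqrt{\alpha+1}$ (using that an odd prime divides $\alpha+1$ unless $\alpha\in\{1,-2,-1/2\}$), and then dispatch the residual values by hand: for $\alpha\in\{1,-2\}$ and for $\alpha\in\{0,-1\}$ (after passing to the basepoint $\sqrt{2}$ over $\Q(\sqrt2)$) one gets $\gal(f^2-\alpha)\cong D_8$ from $\dim\langle c_{1,\alpha},c_{2,\alpha}\rangle=2$, while $\alpha=-1/2$, whose adjusted orbit is $(1/2,1/2,-1/2)$, needs the combination of Proposition \ref{powerful_corollary} with Proposition \ref{abelian subgroups are of infinite index} (a faithful node with at least $2$-dimensional image in $\Omega_\infty^{\text{ab}}$). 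None of these special cases is covered by your sketch, and they do not follow from the local input alone.

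There is also a gap in your $c=-2$ case: the inference from ``all roots $2\cos((\theta+2\pi k)/2^n)$ lie in $\Q^{\text{cyc}}$'' to ``$\theta/\pi\in\Q$'' is exactly the nontrivial step, not a formality. Membership of these real numbers in cyclotomic fields does not by itself exhibit them as cosines of rational multiples of $\pi$, and Niven's theorem applies only \emph{after} that is known. The paper closes this via \cite[Theorems 12,13]{andrews}, which rest on the Amoroso--Zannier lower bound \cite{amoroso} for heights of non-torsion elements of abelian extensions of $\Q$: iterated preimages have height tending to $0$, so abelianness forces the parameter to be a root of unity. (Your parametrization $\alpha=2\cos\theta$ with real $\theta$ also silently assumes $|\alpha|\le 2$; for $|\alpha|>2$ one must write $\alpha=u+u^{-1}$ with $u$ real, where the same height argument, but not Niven, applies.) By contrast, your $(\Leftarrow)$ direction and the $c=0$ analysis are essentially sound --- indeed the direct cyclotomic verification of the listed pairs is more self-contained than the paper's citation --- but as it stands the proposal omits the two arguments that carry the real content of the theorem.
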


In Theorem \ref{function_fields} we switch the attention to global fields of positive odd characteristic and we completely classify quadratic pairs $(f,\alpha)$ with dynamical abelian Galois groups: we prove that they necessarily arise, up to conjugation, from \emph{constant pairs}, namely pairs $(g,\beta)$ where $\beta$ and the coefficients of $g$ belong to a finite field (and of course, every such pair has abelian dynamical Galois group).

The local considerations that we used are based on \emph{tame} ramification, where it is enough to show that the local arboreal representation has infinite ramification to rule out abelian images (given that the relevant Galois groups are pro-$2$-groups). One might wonder whether a careful examination of the wild ramification combined with the Hasse--Harf theorem, which constrains the upper ramification jumps in abelian extensions of local fields to be integers, might lead to further progress on this question and on \cite[Conjecture 1]{andrews}. We leave this possibility to future research. 

Finally, in Section \ref{sec:non_realizable} we construct infinite families of subgroups of $\Omega_\infty$ that are not topologically finitely generated and cannot be realized as arboreal images over number fields. Let us briefly explain the framework here. Recall that there is a bijection between the set of closed maximal subgroups of $\Omega_\infty$ and the set of non-zero vectors in $\bigoplus_{i\in\Z_{\geq 1}}\F_2$ (see Section \ref{sec:finite_generation} for details). Here and in the rest of the paper, when $A$ is a set of indexes and $\{G_i\}_{i \in A}$ is a collection of groups, the symbol $\bigoplus_{i \in A}G_i$ denotes the \emph{direct sum} of those groups, i.e. the group whose elements are taken to be collections $(g_i)_{i \in A}$ with $g_i \in G_i$ for each $i \in A$ and $g_i=\text{id}_{G_i}$ for all but finitely many $i$ in $A$.

For every non-zero vector $v \in \bigoplus_{i\in\Z_{\geq 1}}\F_2$, denote by $M_v$ the corresponding maximal subgroup. For any subset $J\subseteq \bigoplus_{i\in \Z_{\geq 1}}\F_2$ let $M_J\coloneqq \bigcap_{v\in J}M_v$. It is immediate to see that $[\Omega_\infty:M_J]=\infty$ if and only if $J$ is infinite. Conjecture \ref{Jones conjecture} implies then the following special case.

\begin{conjecture} \label{Jones conjecture:2}
Let $K$ be a number field, let $f\in K[x]$ be quadratic and let $J\subseteq \bigoplus_{i\in\Z_{\geq 1}}\F_2$ be an infinite subset. If $\im(\rho_{f,0})$ is contained in $M_J$, then $f$ is PCF or $0$ is periodic for $f$. \footnote{Observe that for an arboreal image $\im(\rho_{f,0})$, being contained in $M_J$ is a well-defined notion, since $M_J$ is normal in $\Omega_{\infty}$: it does not depend on the identification between the tree of roots and the ``abstract'' tree.}
\end{conjecture}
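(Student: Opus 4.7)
The plan is to translate the containment $\im(\rho_{f,0})\subseteq M_J$ into an arithmetic condition on the critical orbit, and then bring it into contact with the equivalent characterizations of PCF supplied by Theorem \ref{PCF_equivalence}. Since $\Omega_\infty$ is an iterated wreath product of copies of $C_2$, one has $\Omega_\infty^{\ab}\cong \prod_{i\geq 1}\F_2$ as a topological group, and the continuous characters $\Omega_\infty^{\ab}\to \F_2$ are exactly the finitely supported vectors $v\in \bigoplus_{i\geq 1}\F_2$, recovering the description of the $M_v$ recalled in the excerpt. Composing $\rho_{f,0}$ with the $i$-th coordinate projection of $\Omega_\infty^{\ab}$ yields a quadratic character $\chi_i$ of $\gal(K^{\text{sep}}/K)$; by the standard description of the abelianization of a quadratic arboreal image (see \cite{stoll} and the references in the discussion of Theorem \ref{PCF_equivalence}(4)), $\chi_i$ corresponds, via Kummer theory and up to a reindexing, to the class $[f^{i}(a)]\in K^{*}/K^{*2}$.

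With this dictionary, $\im(\rho_{f,0})\subseteq M_J$ becomes the assertion that for every $v\in J$ the character $\sum_i v_i\chi_i$ is trivial on $\gal(K^{\text{sep}}/K)$, i.e.\ $\prod_{i:\,v_i=1}f^i(a)\in K^{*2}$. Because $\bigoplus_{i\geq 1}\F_2$ is an elementary abelian $2$-group, every finite-dimensional subspace of it has only finitely many elements; hence $J$ infinite forces its $\F_2$-span to be infinite-dimensional, and we obtain infinitely many $\F_2$-linearly independent square-relations $\prod_{i\in S_k}f^i(a)\in K^{*2}$.

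The main obstacle is now to pass from this sparse infinite family of relations to the statement that the full $\F_2$-span of $\{[f^n(a)]:n\geq 1\}$ in $K^{*}/K^{*2}$ is finite-dimensional — which would conclude the argument via Theorem \ref{PCF_equivalence}(4)$\Rightarrow$(1). This is not a formal consequence of linear algebra, since infinitely many independent relations among countably many generators may still leave an infinite-dimensional quotient, so a dynamical input is indispensable. The natural route, in the spirit of Theorem \ref{PCF_equivalence}, is to pair each relation $\prod_{i\in S_k}f^i(a)\in K^{*2}$ with the ramification characterization (2) of that theorem: each relation forces the primes of $K$ where the corresponding quadratic extensions $K(\sqrt{f^i(a)})$ ramify to pair up and cancel, and the heart of the proof should be a combinatorial/height-theoretic bookkeeping showing that infinitely many such independent cancellations among iterates of a post-critically infinite quadratic polynomial are impossible unless $0$ is periodic (in which case only finitely many of the $\chi_i$ are actually distinct, and the cancellations become vacuous).

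I expect this last dynamical/ramification step to be the genuine obstacle, and for a completely unstructured infinite $J$ it seems to require essentially the full strength of Conjecture \ref{Jones conjecture}. For structured families of $J$ the plan becomes concrete: for instance a telescoping family $v_k=e_{i_k}+e_{i_{k+1}}$ produces ratios $f^{i_k}(a)/f^{i_{k+1}}(a)\in K^{*2}$, so any prime of bad reduction appearing in $f^{i_k}(a)$ with odd multiplicity must reappear in $f^{i_{k+1}}(a)$; standard height/ramification estimates for iterates of quadratic polynomials then rule out infinite such chains unless $f$ is PCF. It is presumably families of this shape that furnish the unconditional ``infinite families of non-realizable $M_J$'' advertised in the abstract, and a full proof of Conjecture \ref{Jones conjecture:2} would amount to extending this strategy to arbitrary infinite $J$.
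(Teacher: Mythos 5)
The statement you were asked about is a \emph{conjecture} in the paper, not a theorem: it is recorded as a consequence of Jones' Conjecture \ref{Jones conjecture}, and the paper never proves it in full generality --- it only establishes the special cases of $(k,\ell)$-progressing sets (Theorem \ref{evidence1}) and, for polynomials $x^2+a$, $M$-coprime sets (Theorem \ref{evidence2}). So there is no ``paper proof'' for your attempt to match, and your diagnosis is exactly right on the two essential points: first, the translation of $\im(\rho_{f,0})\subseteq M_J$ into square relations among the critical orbit is correct and is precisely the paper's Proposition \ref{powerful_corollary} (with the small caveat that the relations involve the \emph{adjusted} orbit $c_{n,\alpha}$, with the sign twist $c_1=-f(a)$ and the basepoint shift, rather than the raw iterates $f^i(a)$); second, passing from a sparse infinite family of relations to PCF is genuinely not formal, and for unstructured $J$ appears to require the full strength of Conjecture \ref{Jones conjecture}. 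Your observation that an infinite $J$ spans an infinite-dimensional subspace of $\bigoplus_i \F_2$ is also correct and is implicitly what makes $[\Omega_\infty:M_J]=\infty$.

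Where your sketch for structured families diverges from what the paper actually does: the closing dynamical step is not carried out by ``standard height/ramification estimates'' --- no such estimate ruling out infinite chains of square relations among iterates is available off the shelf --- but by Faltings' theorem applied to a single fixed hyperelliptic curve. For $(k,\ell)$-progressing $J$ the relations $\prod_{j=r_v}^{k\ell+r_v}c_{j,\alpha}\in K^{*2}$ produce infinitely many $K$-points $\bigl(f^{r_v-i_0}(\gamma),\sqrt{\prod_j c_{j,\alpha}}\bigr)$ on $y^2=\prod_{j=1}^{\ell}f^{kj+i_0}(x)-\alpha$, which has genus $\geq 2$; for $M$-coprime $J$ the paper first uses the rigid divisibility of the orbit (Lemma \ref{divisibility}) plus the Capitulation Theorem (Proposition \ref{square_extension}) to make infinitely many $c_i$ squares in one fixed finite extension $L$, and then feeds infinitely many $L$-points into $y^2=f^3(x)$. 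Your telescoping family $v_k=e_{i_k}+e_{i_{k+1}}$ would be handled the same way: the relations force all $c_{i_k,0}$ into a single square class, so over $L=K(\sqrt{c_{i_1,0}})$ one again gets infinitely many points on a fixed genus $\geq 2$ curve, contradicting Faltings. In short, your proposal correctly reconstructs the reduction and correctly identifies the scope of what is provable, but the one soft spot is the final mechanism, which should be Mordell--Faltings on a fixed curve (this is also why the paper's positive-characteristic analogue required the non-isotriviality input of Theorem \ref{isotriviality}) rather than an unspecified height argument.
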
 
Our next two main results establish a number of cases of Conjecture \ref{Jones conjecture:2}, hence providing unconditional support in favour of Conjecture \ref{Jones conjecture}. We pinpoint in Definition \ref{progressing definition} a certain class of subsets $J\subseteq \bigoplus_{i\in\Z_{\geq 1}}\F_2$, which we call $(k,\ell)$-progressing, where $k$ and $\ell$ are positive integers. We then prove the following theorem.

\begin{theorem} \label{thm2}
Let $k,\ell$ be positive integers and let  $J\subseteq \bigoplus_{i\in\Z_{\geq 1}}\F_2$ be a $(k,\ell)$-progressing subset.
\begin{enumerate}
\item Conjecture \ref{Jones conjecture:2} holds for $M_J$. In particular, if $\im(\rho_{f,0})\subseteq M_J$ then $f$ is PCF.
\item For every number field $K$ and every quadratic polynomial $f\in K[x]$, $\im(\rho_{f,0})\neq M_J$.
\end{enumerate}
\end{theorem}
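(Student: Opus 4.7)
The strategy rests on combining Theorem \ref{PCF_equivalence} with the explicit dictionary between closed maximal subgroups of $\Omega_\infty$ and non-zero vectors in $\bigoplus_{i \geq 1}\F_2$. Under the usual Kummer-theoretic description of quadratic arboreal representations, the inclusion $\im(\rho_{f,0})\subseteq M_v$ for a non-zero $v\in\bigoplus_{i\geq 1}\F_2$ is equivalent to the vanishing in $K^{*}/K^{*2}$ of a specific $\F_2$-linear combination of the classes $\{f^n(a)\}_{n\geq 1}$ whose exponents are read off from the coordinates of $v$. Hence $\im(\rho_{f,0})\subseteq M_J$ translates into a family of $\F_2$-linear relations indexed by $J$ among these Kummer classes.

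For part (1), the plan is to exploit the $(k,\ell)$-progressing hypothesis, which should be precisely designed so that the resulting family of relations forces the $\F_2$-subspace of $K^{*}/K^{*2}$ spanned by $\{f^n(a)\}_{n\geq 1}$ to be finite-dimensional. Morally, the progressing structure provides, from some level onwards, enough relations to express each new class $f^n(a)$ modulo squares as an $\F_2$-combination of earlier ones, so that the whole span is generated by the first finitely many terms. The implication (4)$\Rightarrow$(1) of Theorem \ref{PCF_equivalence} then yields that $f$ is PCF, which is exactly the content of Conjecture \ref{Jones conjecture:2} for $M_J$.

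For part (2), I would argue by contradiction: if $\im(\rho_{f,0})=M_J$, then by part (1), $f$ is PCF, and hence Theorem \ref{PCF_equivalence} (1)$\Rightarrow$(3) gives that $\im(\rho_{f,0})$ is topologically finitely generated. The crux of (2) is to verify that $M_J$ is \emph{not} topologically finitely generated for any $(k,\ell)$-progressing $J$. This should follow from analyzing the pro-$2$ abelianization of $M_J$: the relations cutting $M_J$ out of $\Omega_\infty$ are $\F_2$-linear with finite supports, and the progressing structure ensures that infinitely many independent characters of $\Omega_\infty^{\text{ab}}$ remain non-trivial on $M_J$, forcing $M_J^{\text{ab}}$ to have infinite $\F_2$-rank, contradicting topological finite generation.

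The main obstacle I expect lies in part (2): whereas part (1) should fall out almost directly once the $(k,\ell)$-progressing hypothesis is unpacked and matched to the Kummer dictionary, extracting infinite $\F_2$-rank for $M_J^{\text{ab}}$ requires a delicate bookkeeping of how the finitely-supported linear conditions imposed by $J$ interact with the abelianization of $\Omega_\infty$, and in particular how the progressing structure guarantees the existence of characters that are not controlled by $J$ because they are supported at deep enough levels of the tree.
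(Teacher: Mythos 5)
Your high-level skeleton matches the paper's (Kummer dictionary via Proposition \ref{powerful_corollary}, then Theorem \ref{PCF_equivalence}, then non-finite-generation of $M_J$ for part (2)), but both of the steps you leave to "unpacking" contain genuine gaps, and in both cases the paper's actual mechanism is different from the one you propose.

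For part (1): the $(k,\ell)$-progressing hypothesis does \emph{not} force the span $\langle c_{1},c_2,\dots\rangle$ in $K^{*}/K^{*2}$ to be finite dimensional, so you cannot invoke the implication (4)$\Rightarrow$(1) of Theorem \ref{PCF_equivalence}. The definition only provides an \emph{infinite} set $I$ of vectors whose supports are arithmetic progressions of difference $k$ and length $\ell$; these supports may be arbitrarily sparse. For instance, with $k=1$, $\ell=2$ and $I=\{e_{2^m}+e_{2^m+1}\colon m\geq 1\}$, the relations only say that each product $c_{2^m,\alpha}\,c_{2^m+1,\alpha}$ is a square, leaving the classes at all other indices completely unconstrained; no purely $\F_2$-linear bookkeeping can conclude finite dimensionality (condition (4) does hold at the end, but only because the theorem's conclusion that $f$ is PCF implies it — it is not an intermediate step obtainable from the relations alone). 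The paper instead feeds the relations directly into Faltings: from $\im(\rho_{f,\alpha})\subseteq M_v$ for $v\in I$ one gets $\prod_{j} c_{kj+r_v,\alpha}\in K^{*2}$, where $r_v$ is the first index of the progression, and every such relation produces the $K$-rational point $\bigl(f^{r_v-i_0}(\gamma),\,\sqrt{\prod_j c_{kj+r_v,\alpha}}\bigr)$ on the \emph{single} fixed hyperelliptic curve $y^2=\prod_{j=1}^{\ell}\bigl(f^{kj+i_0}(x)-\alpha\bigr)$ of genus $\geq 2$. If $f$ were post-critically infinite these points would be infinitely many (the $r_v$ are unbounded and the orbit of the critical point $\gamma$ is infinite), contradicting Faltings' theorem. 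The fixed curve is exactly what absorbs the unboundedly varying square classes, which is why no pigeonhole into a fixed quadratic extension (as in the proof of (4)$\Rightarrow$(1)) is available or needed here.

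For part (2): your claim that "the progressing structure ensures that infinitely many independent characters of $\Omega_\infty^{\text{ab}}$ remain non-trivial on $M_J$" is false, and it fails precisely in the flagship case. Take $J=\bigoplus_{i\in\Z_{\geq 1}}\F_2$, which is $(k,\ell)$-progressing for every $k,\ell$ and gives $M_J=[\Omega_\infty,\Omega_\infty]$ (the case of Corollary \ref{evidence1: conj}): every $\phi_n$ restricts to zero on $M_J$. In general the restrictions of characters of $\Omega_\infty^{\text{ab}}$ to $M_J$ are governed by the quotient of $\bigoplus_n\F_2\,\phi_n$ by the span of $J$, and since any infinite $J$ has infinite-dimensional span this quotient can be arbitrarily small, so $M_J^{\text{ab}}$ cannot be shown to have infinite rank by characters pulled back from $\Omega_\infty^{\text{ab}}$. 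The paper's Lemma \ref{finite_generation} proves the stronger statement that $M_I$ is not topologically finitely generated for \emph{every} non-empty $I$, by a two-step argument you are missing: (i) if $M_I$ were topologically finitely generated, the restrictions $\phi_{n}|_{M_I}$ would span a finite-dimensional subspace of the $2$-torsion of $(M_I^{\text{ab}})^{\vee}$, so $H=\bigcap_n\ker\phi_{n}|_{M_I}=[\Omega_\infty,\Omega_\infty]$ would be \emph{open} in $M_I$ and hence itself topologically finitely generated by a profinite version of Schreier's lemma (Proposition \ref{Profinite Schreier's lemma}); and (ii) $[\Omega_\infty,\Omega_\infty]$ carries the infinite independent family of characters $\widetilde{\phi}_n$ from \cite{ferra5} (sum of the \emph{first half} of the level-$n$ coordinates), which are genuinely new characters of the commutator subgroup, not restrictions from $\Omega_\infty^{\text{ab}}$ — contradiction. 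So the witnesses to infinite rank live one level down, on $[\Omega_\infty,\Omega_\infty]$, and the Schreier-type descent is the missing ingredient; once this is in place, your deduction of (2) from (1) via Theorem \ref{PCF_equivalence} (1)$\Rightarrow$(3) is the same as the paper's. Note also that your expectation of where the difficulty lies is inverted: part (1) carries the arithmetic weight (Faltings), while part (2) is pure profinite group theory valid for all non-empty $J$, progressing or not.
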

As a special case of Theorem \ref{thm2}, we establish in Corollary \ref{evidence1: conj} that the commutator subgroup $[\Omega_{\infty},\Omega_{\infty}]$ (that corresponds to $J=\bigoplus_{i\in \Z_{\geq 1}}\F_2$) cannot be realized as an arboreal image over a number field. 

For the special subclass of polynomials of the form $f=x^2+a$ we exploit the well-known divisibility relations displayed by the elements of the post-critical orbit of $f$, which we summarize in the well-known Lemma \ref{divisibility}. Combining this lemma together with a consequence of the Capitulation Theorem (Proposition \ref{square_extension}) we are led to pinpoint in Definition \ref{M-coprimality definition} a second class of subsets $J\subseteq \bigoplus_{i\in\Z_{\geq 1}}\F_2$, which we call $M$-coprime, for some nonnegative integer $M$. For this class we are able to prove the following. 

\begin{theorem} \label{thm3}
Let $M$ be a nonnegative integer. Conjecture \ref{Jones conjecture:2} holds for every $M$-coprime subset $J\subseteq \bigoplus_{i\in\Z_{\geq 1}}\F_2$, when one restricts it to polynomials of the form $x^2+a$. 
\end{theorem}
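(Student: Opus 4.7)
The plan is to reduce the theorem to Theorem \ref{PCF_equivalence}: assuming that $0$ is not periodic for $f=x^2+a$ (so that the arboreal representation is in fact defined, the periodic case being part of the conclusion), it suffices to show that the hypothesis $\im(\rho_{f,0}) \subseteq M_J$ forces the $\F_2$-subspace $V \subseteq K^*/K^{*2}$ generated by $\{f^n(0)\}_{n \geq 1}$ to be finite-dimensional; applying $(4)\Rightarrow(1)$ of Theorem \ref{PCF_equivalence} then yields that $f$ is PCF. Note that since for $f=x^2+a$ the critical point is $0$ and $\alpha=0$, we have $f^n(a)-\alpha=f^n(0)$, so condition (4) is exactly about the classes $f^n(0) \bmod K^{*2}$.

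The first key ingredient is the Kummer-theoretic identification of the maximal closed subgroups of $\Omega_\infty$ with non-zero vectors $v \in \bigoplus_{i \geq 1}\F_2$. Under the standard dictionary, the containment $\im(\rho_{f,0}) \subseteq M_v$ is equivalent to the single square relation $\prod_{i}(f^i(0))^{v_i} \in K^{*2}$, so the hypothesis $\im(\rho_{f,0}) \subseteq M_J$ provides an infinite family of $\F_2$-linear relations among the generators of $V$, indexed by $v \in J$. The second ingredient is the divisibility structure of Lemma \ref{divisibility}, which for $f=x^2+a$ gives strong control on which primes divide each $f^n(0)$. The definition of $M$-coprimality is engineered precisely so that, outside the controlled initial block of indices $\leq M$, the supports of the vectors in $J$ decouple compatibly with this prime structure. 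Combining the two, I would extract, for each sufficiently large $n$, a vector $v^{(n)} \in J$ whose associated square relation asserts that $f^n(0)$ is a square in $K^*$ up to a multiplicative contribution supported on a \emph{fixed} finite set $S$ of primes, namely the primes of bad reduction together with those dividing the finitely many $f^j(0)$ with $j \leq M$.

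The final step is to invoke Proposition \ref{square_extension}, the consequence of the Capitulation Theorem, to upgrade these ``squareness up to $S$'' assertions into the statement that all the classes $f^n(0) \bmod K^{*2}$ with $n$ large lie in a common finite-dimensional $\F_2$-subspace of $K^*/K^{*2}$ generated by a controlled set of $S$-units; combined with the trivial control coming from the finitely many indices $\leq M$, this gives that $V$ is finite-dimensional. The main obstacle I foresee is precisely the matching between the abstract combinatorial notion of $M$-coprimality on $\bigoplus_{i \geq 1}\F_2$ and the arithmetic divisibility pattern of the post-critical orbit supplied by Lemma \ref{divisibility}: exhibiting the ``isolating'' vectors $v^{(n)}\in J$ and trimming the resulting square relations down to a single $f^n(0)$ modulo $S$-units is the technical heart of the argument, and it is here that the specific quantitative shape of $M$-coprimality must be tuned against the size of the controlling finite set $S$ so that the Capitulation input of Proposition \ref{square_extension} is applicable.
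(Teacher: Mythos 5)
Your middle steps track the paper's actual argument closely: the paper likewise converts $\im(\rho_{f,0})\subseteq M_J$ into square relations via Proposition \ref{powerful_corollary}, passes to $K'=K(\sqrt{c_m}\colon m\leq M)$ to neutralize the indices $\leq M$, and uses Lemma \ref{divisibility} together with the coprimality condition $(i,j)=1$ to show that the valuation of the distinguished $c_i$ is even at every place of $K'$, so that Proposition \ref{square_extension} makes $c_i$ a square in one fixed finite extension $L$. But your final step contains a genuine gap: $M$-coprimality produces only \emph{one} distinguished index $i$ per vector $v\in I$, and condition (2) of Definition \ref{M-coprimality definition} guarantees merely that these indices form an unbounded set --- not that every sufficiently large $n$ occurs. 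In the model case of Corollary \ref{evidence2:conj} the distinguished indices are the primes $p_n$, so the classes $c_n \bmod K^{*2}$ for composite $n$ are completely uncontrolled by the hypothesis. Consequently you cannot conclude that the full space $\langle c_{n}\rangle_{n\geq 1}\subseteq K^*/K^{*2}$ is finite dimensional, and the planned reduction to the implication $(4)\Rightarrow(1)$ of Theorem \ref{PCF_equivalence} fails as stated: condition (4) quantifies over the entire adjusted post-critical orbit, and squareness of $c_i$ along an unbounded but non-cofinite set of indices (and over $L$ rather than $K$, a second mismatch with (4)) does not imply it.

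The repair is exactly what the paper does instead of invoking (4): use the unbounded family of indices \emph{directly} in a Faltings argument. Since $c_i\in L^{*2}$ for infinitely many $i$ with $L$ fixed, and $f^i(0)=f^3\bigl(f^{i-3}(0)\bigr)$, the points $\bigl(f^{i-3}(0),\sqrt{f^i(0)}\bigr)$ would give infinitely many $L$-rational points on the single genus~$\geq 2$ curve $y^2=f^3(x)$ if $f$ were post-critically infinite, contradicting Faltings' theorem; hence $f$ is PCF, and only then does Theorem \ref{PCF_equivalence} enter (to identify PCF with topological finite generation). Equivalently, you could salvage your formulation by observing that the proof of $(4)\Rightarrow(1)$ never needs the whole space to be finite dimensional, only infinitely many $c_m$ lying in a single square class, which your ``squareness up to a fixed finite set $S$'' step does deliver after a pigeonhole in the finite group of classes in $K'^*/K'^{*2}$ with everywhere-even valuation outside $S$. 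But as written --- deducing condition (4) for all large $n$ --- the argument claims strictly more than the hypothesis provides.
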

Finally, in Corollary \ref{evidence2:conj} we use Bertrand's Postulate to construct an infinite class of examples where Theorem \ref{thm2} is not available but instead Theorem \ref{thm3} can be applied.

\section*{Acknowledgments}
We wish to thank the hospitality of the Instituto de Ciencias Matem\'aticas in Madrid in November 2019, which allowed us to make substantial progress on this work. The second author wishes to thank the Max Planck Institute of Mathematics in Bonn for its financial support, great work conditions and an inspiring atmosphere. Finally, we are grateful to Clayton Petsche for providing very useful feedback on the paper.

\section{Finitely generated arboreal representations}\label{sec:finite_generation}

Before stating and proving our first result, we need to recall some of the notation and a key result from \cite{ferra5}. Let $\Omega_\infty$ be the automorphism group of the infinite, regular, rooted binary tree. Every element $\sigma\in \Omega_\infty$ admits a so-called \emph{digital representation}, namely a decomposition $\sigma=(\ldots,\sigma_n,\ldots,\sigma_1)$ where $\sigma_n\in \F_2^{2^{n-1}}$ for every $n$. One has, for every $n\geq 1$, a continuous homomorphism $\phi_n\colon \Omega_\infty\to \F_2$ that sends $\sigma$ to the sum of the coordinates of $\sigma_n$. The natural homomorphism from $\Omega_\infty$ to its abelianization is given by $\widehat{\phi}\coloneqq \prod_{n\geq 1}\phi_n\colon \Omega_\infty\to \Omega_\infty^{\text{ab}}\cong\prod_{n\geq 1}\F_2$. Since $\Omega_\infty$ is a pro-2-group, its maximal closed subgroups are the ones having index two, and hence they are parametrized by the non-zero vectors in $\bigoplus_{n\geq 1}\F_2$. For such a vector $v=(v_n)_{n\geq 1}$ we denote by $M_v$ the corresponding maximal subgroup: this is nothing else than $\ker \left(\sum_{n\geq 1}v_n\phi_n\right)$.

Now let $K$ be a field of characteristic not 2 and let $f=(x-a)^2-b\in K[x]$. The \emph{adjusted post-critical orbit} of $f$ is the sequence $\{c_n\}_{n\geq 1}$ defined by $c_1\coloneqq -f(a)$ and $c_n\coloneqq f^n(a)$ for every $n\geq 2$. For every $\alpha\in K$ we let $c_{1,\alpha}\coloneqq c_1+\alpha$ and $c_{n,\alpha}\coloneqq c_n-\alpha$ for every $n\geq 2$. Let now $\alpha\in K$ and assume that $f^n(x)-\alpha$ is separable for every $n$, which is equivalent to asking that $c_{n,\alpha}\neq 0$ for every $n\geq 1$, i.e.\ that $\alpha$ does not belong to the post-critical orbit of $f$. For every $n\geq 1$ we denote by $K_n(f,\alpha)$ the splitting field of the polynomial $f^n(x)-\alpha$ and by $G_n(f,\alpha)$ its Galois group. Let $K_\infty(f,\alpha)\coloneqq \varinjlim_n K_n(f,\alpha)$ and $G_\infty(f,\alpha)$ the Galois group of $K_\infty(f,\alpha)$. This is the image of the arboreal representation $\rho_{f,\alpha}$ attached to $f$ with basepoint $\alpha$. 

Finally, for $a_1,\ldots,a_n\in K^*$ we denote by $\langle a_1,\ldots,a_n\rangle$ the $\F_2$-sub-vector space generated by the $a_i$'s in $K^*/{K^*}^2$.

The following proposition is stated for $\rho_{f,0}$ in \cite{ferra5}, but extending it to $\rho_{f,\alpha}$ for any $\alpha$ requires no non-trivial modifications.

\begin{proposition}{{\cite[Corollary 4.3]{ferra5}}}\label{powerful_corollary}
Let $K$ be a field of characteristic not 2, $f\in K[x]$ a monic, quadratic polynomial with adjusted post-critical orbit $\{c_n\}_{n\geq 1}$ and $\alpha\in K$ not belonging to the post-critical orbit of $f$. Let $v=(v_n)_{n\geq 1}\in\bigoplus_{n\geq 1}\F_2$. Then we have:

$$\im(\rho_{f,\alpha})\subseteq M_v\Longleftrightarrow \prod_{n\geq 1} c_{n,\alpha}^{v_n}\in K^{*2}.$$

\end{proposition}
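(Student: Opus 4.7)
My plan is to translate the condition $\im(\rho_{f,\alpha})\subseteq M_v$ into a statement about quadratic characters of $\gal(K^{\text{sep}}/K)$ and then apply Kummer theory. Since $M_v=\ker\bigl(\sum_{n\geq 1}v_n\phi_n\bigr)$, the inclusion $\im(\rho_{f,\alpha})\subseteq M_v$ is equivalent to the composite character $\chi_v\coloneqq\bigl(\sum_n v_n\phi_n\bigr)\circ\rho_{f,\alpha}$ being the trivial homomorphism $\gal(K^{\text{sep}}/K)\to\F_2$. As only finitely many $v_n$ are nonzero, $\chi_v$ is a finite sum of quadratic characters; if I can identify each $\phi_n\circ\rho_{f,\alpha}$ with the Kummer character attached to $c_{n,\alpha}$, then multiplicativity in Kummer theory makes $\chi_v$ correspond to $\prod_n c_{n,\alpha}^{v_n}\in K^*/K^{*2}$, and triviality of $\chi_v$ becomes exactly the condition $\prod_n c_{n,\alpha}^{v_n}\in K^{*2}$ in the statement.

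The core step is therefore to show that $\phi_n\circ\rho_{f,\alpha}$ cuts out $K(\sqrt{c_{n,\alpha}})/K$. The roots of $P_n(x)\coloneqq f^n(x)-\alpha$ organize into sibling pairs $\{a+\sqrt{b+\gamma},\,a-\sqrt{b+\gamma}\}$ as $\gamma$ ranges over the roots of $P_{n-1}$, and $\phi_n$ records the parity of the number of such pairs that get swapped at level $n$. Set $D_n\coloneqq\prod_\gamma(b+\gamma)$, where the product is over all roots of $P_{n-1}$; symmetry in $\gamma$ forces $D_n\in K^*$. Choose a square root $\sqrt{D_n}=\prod_\gamma\sqrt{b+\gamma}\in K^{\text{sep}}$, and write $g(\sqrt{b+\gamma})=\epsilon_\gamma(g)\sqrt{b+g(\gamma)}$ with $\epsilon_\gamma(g)\in\{\pm 1\}$; a direct check then gives $g(\sqrt{D_n})=\bigl(\prod_\gamma\epsilon_\gamma(g)\bigr)\sqrt{D_n}=(-1)^{s_n(g)}\sqrt{D_n}$, where $s_n(g)$ is the level-$n$ swap parity. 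Hence $\phi_n\circ\rho_{f,\alpha}$ is the Kummer character of $D_n\bmod K^{*2}$.

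The final step is to compute $D_n$ explicitly. From $P_{n-1}(x)=\prod_\gamma(x-\gamma)$, evaluation at $x=-b$ yields $D_n=(-1)^{2^{n-1}}P_{n-1}(-b)$, and since $f(a)=-b$ one has $P_{n-1}(-b)=f^{n-1}(-b)-\alpha=f^n(a)-\alpha=c_n-\alpha$. For $n\geq 2$ the exponent $2^{n-1}$ is even, so $D_n=c_n-\alpha=c_{n,\alpha}$; for $n=1$ the odd exponent flips the sign and produces $D_1=b+\alpha=c_{1,\alpha}$, which matches the asymmetric convention $c_{1,\alpha}=c_1+\alpha$ in the definition. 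The separability hypothesis that $\alpha$ is not in the forward orbit of $a$ ensures that each $c_{n,\alpha}$ is nonzero, so these Kummer characters are meaningful. I expect the main delicate point to be precisely this sign bookkeeping separating $n=1$ from $n\geq 2$; once that is aligned, the rest is a clean application of Kummer-theoretic multiplicativity.
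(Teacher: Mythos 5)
The paper contains no proof of this proposition: it is imported verbatim from \cite[Corollary 4.3]{ferra5}, with only the remark that extending it from $\rho_{f,0}$ to $\rho_{f,\alpha}$ requires no non-trivial modifications. Your argument is correct and is essentially the standard computation underlying the cited result --- identifying $\phi_n\circ\rho_{f,\alpha}$ with the Kummer character of $D_n=(-1)^{2^{n-1}}P_{n-1}(-b)=c_{n,\alpha}$ (including the correct sign flip at $n=1$, which matches the convention $c_{1,\alpha}=c_1+\alpha$) and then invoking multiplicativity of Kummer characters and normality of $M_v$ --- so it matches the intended proof rather than offering a different route.
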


Part of the proof of our Theorem \ref{PCF_equivalence} will require a distinction between global function fields and number fields. This is due to the fact that the Mordell conjecture, a theorem due to Faltings over
number fields and Samuel over global function fields, needs the curve to be non-isotrivial in the function field case. Recall that a curve $C$ over a global field $K$ of positive characteristic $p$ is called \emph{isotrivial} if there exists a curve $C'$ defined over $\overline{\F}_p$ such that $C$ is isomorphic to $C'$ over $\overline{K}$. For this reason we will now state and prove, in degree 2, a result that had been conjectured by Hindes in \cite[Conjecture 3.1]{hindes1} for polynomials of any degree. In order to do so, we first need two auxiliary results.

\begin{proposition}{{\cite[Proposition 3.3]{hindes1}}}\label{isotrivial_map}
Let $K$ be a global field of positive characteristic and $C_1,C_2$ be smooth curves over $K$. Suppose there exists a non-constant morphism $\varphi\colon C_1\to C_2$. If $C_1$ is isotrivial, then also $C_2$ is isotrivial.
\end{proposition}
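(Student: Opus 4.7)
The strategy is to reduce the statement about curves to the analogous statement about abelian varieties via the Jacobian functor, handle the low-genus cases separately, and then apply the Torelli theorem to transfer the conclusion back to curves. Working throughout geometrically (since isotriviality is by definition a statement over $\overline{K}$) is natural. If $g(C_2)=0$, then $C_{2,\overline{K}}\cong \mathbb{P}^1_{\overline{K}}$, which is obviously defined over $\overline{\F}_p$ and there is nothing to prove. Assume now $g(C_2)\geq 1$.

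Since $C_1$ is isotrivial, any isomorphism $C_{1,\overline{K}}\xrightarrow{\sim} C'_{1,\overline{K}}$ with $C_1'$ defined over $\overline{\F}_p$ induces an isomorphism $\mathrm{Jac}(C_1)_{\overline{K}}\cong \mathrm{Jac}(C_1')_{\overline{K}}$, so $\mathrm{Jac}(C_1)$ is an isotrivial abelian variety over $K$. Applying Albanese functoriality to $\varphi$ produces a surjective homomorphism $\varphi_{*}\colon \mathrm{Jac}(C_1)\to \mathrm{Jac}(C_2)$. I would then invoke the intermediate claim that every abelian-variety quotient of an isotrivial abelian variety is itself isotrivial: if $A/K$ satisfies $A_{\overline{K}}\cong A'_{\overline{K}}$ for some $A'/\overline{\F}_p$, then any quotient of $A_{\overline{K}}$, by Poincar\'e complete reducibility applied to $A'$ over $\overline{\F}_p$, is isogenous to a product of simple isogeny factors of $A'$, each of which is defined over $\overline{\F}_p$. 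Together with the fact that $\mathrm{End}(A'_{\overline{K}})=\mathrm{End}(A')$ in this setting, this upgrades the isogeny statement to an isomorphism over $\overline{K}$, and the canonical principal polarization inherited from the theta divisor descends as well because it is intrinsic to the abelian variety. Hence $\mathrm{Jac}(C_2)$ is isotrivial as a principally polarized abelian variety.

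To conclude, for $g(C_2)=1$ we identify $C_{2,\overline{K}}$ with $\mathrm{Jac}(C_{2,\overline{K}})$ after choosing a base point and we are done. For $g(C_2)\geq 2$, I would apply the Torelli theorem, which tells us that the principally polarized Jacobian functor is injective on isomorphism classes of smooth projective curves over an algebraically closed field, so that the isotriviality of the polarized Jacobian forces the isotriviality of $C_2$ itself.

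The main obstacle I anticipate is the intermediate claim that quotients of isotrivial abelian varieties are isotrivial. The delicate point is to go from "isogenous to a constant abelian variety" to "isomorphic over $\overline{K}$ to a constant abelian variety", and this relies on the non-trivial rigidity input that endomorphism algebras of abelian varieties over $\overline{\F}_p$ do not grow under base change to $\overline{K}$, ensuring that the simple isogeny decomposition of $A'_{\overline{K}}$ coincides with the one of $A'$. Once this is in place, the rest of the argument is essentially formal.
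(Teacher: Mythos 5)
This statement is not proved in the paper at all: it is imported verbatim from Hindes (\cite[Proposition 3.3]{hindes1}), so your attempt can only be measured against the standard proofs of this fact. Measured that way, your proof has a genuine gap, and it sits exactly at the step you flagged as the main obstacle: the intermediate claim that every abelian-variety quotient of an isotrivial abelian variety is isotrivial is \emph{false} in positive characteristic. Rigidity of endomorphisms under extension of algebraically closed fields does give you that abelian subvarieties, homomorphisms, and polarizations of a constant abelian variety are constant, and it does yield ``isogenous over $\overline{K}$ to a constant abelian variety.'' But the upgrade from isogeny to isomorphism fails, because finite \emph{infinitesimal} subgroup schemes are not rigid: they are not cut out by endomorphisms, and they can vary in moduli. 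Concretely, if $E/\overline{\F}_p$ is a supersingular elliptic curve, then $E\times E$ contains $\alpha_p\times\alpha_p$, whose order-$p$ subgroup schemes over $\overline{K}$ form a $\mathbb{P}^1(\overline{K})$ of embeddings $x\mapsto (x,tx)$; taking $t$ transcendental, the quotient $(E\times E)_{\overline{K}}/\alpha_p^{(t)}$ is a supersingular abelian surface whose moduli point is transcendental, hence not isomorphic to any abelian surface defined over $\overline{\F}_p$. This is precisely the Moret--Bailly construction of non-isotrivial supersingular families; if your intermediate claim were true, the supersingular locus in $\mathcal{A}_2$ would consist of constant points, contradicting its positive dimensionality. (Your argument is sound where the kernels are \'etale or multiplicative --- subgroups of $E[n]^{\text{\'et}}$ and of $\mu_p^{\,g}$ are rigid by Cartier duality --- so the genus-$1$ case of your reduction survives; the failure is in the local-local part, i.e.\ exactly the supersingular phenomena.)

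The gap is moreover not repairable within your architecture: Moret--Bailly's pencils generically consist of \emph{Jacobians} of genus-$2$ curves, so there exist non-isotrivial curves $C_2$ over $\overline{K}$ whose principally polarized Jacobians are quotients of the constant abelian variety $E\times E$. Hence passing from the curve morphism $\varphi$ to the induced surjection $\mathrm{Jac}(C_1)\twoheadrightarrow \mathrm{Jac}(C_2)$ discards exactly the information that makes the proposition true: the hypothesis that the kernel arises from a map of curves. A robust proof must use $\varphi$ itself, e.g.\ by factoring out relative Frobenius (Frobenius twists of constant curves are constant), spreading the resulting morphism $C'_{1,\overline{K}}\to C_{2,\overline{K}}$ out over a base of finite type over $\overline{\F}_p$, and invoking the de Franchis--Severi finiteness theorem for maps to curves of genus $\geq 2$ to force the family to be constant (genus $0$ and $1$ being handled directly, as you do). As written, your proof cannot be salvaged at the key step.
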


\begin{proposition}[{{\cite[Proposition 1.2]{lockhart}}}]\label{unique_equation}
Let $K$ be a field of characteristic not 2 and $C/K$ be a hyperelliptic curve of genus $g\geq 1$. Then $C$ has a Weierstrass equation over $K$ of the form $y^2=f(x)$, where $f\in K[x]$ is a monic separable polynomial of degree $2g+1$. Moreover, such equation is unique up to a change of coordinates of the form $x=u^2x'+r$, $y=u^{2g+1}y'$ for some $u,r\in K$ with $u\neq 0$.
\end{proposition}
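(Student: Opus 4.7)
The plan is to apply Riemann--Roch at a rational Weierstrass point $P_\infty$ of $C$ (the hypothesis that $f$ has odd degree $2g+1$ implicitly requires the existence of such a point, which serves as the common point at infinity for every Weierstrass model of the given form).

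For existence, I first observe that since $P_\infty$ is a Weierstrass point, its non-gap sequence begins $0,2,4,\ldots,2g,2g+1,\ldots$. Thus $\dim_K L(2P_\infty)=2$ and there is a function $x$ with pole divisor $2P_\infty$. By Riemann--Roch, $\dim L(nP_\infty)=n-g+1$ for $n\geq 2g-1$, so $\dim L((2g+1)P_\infty)=g+2$, and since $1,x,\ldots,x^g$ already span a $(g+1)$-dimensional subspace, there exists $y$ with pole of exact order $2g+1$ at $P_\infty$. The space $L((4g+2)P_\infty)$ has dimension $3g+3$, and the $3g+3$ functions $1,x,\ldots,x^{2g+1},y,xy,\ldots,x^g y$ have pairwise distinct pole orders at $P_\infty$ (the even orders $0,2,\ldots,4g+2$ and the odd orders $2g+1,2g+3,\ldots,4g+1$), hence form a basis. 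Therefore $y^2=A(x)+B(x)y$ with $\deg A\leq 2g+1$ and $\deg B\leq g$. Since the characteristic of $K$ is not $2$, completing the square $y\mapsto y-B(x)/2$ eliminates the linear-in-$y$ term and yields $y^2=f(x)$, where comparison of pole orders forces $\deg f=2g+1$ exactly, and smoothness of $C$ forces $f$ to be separable. If $f$ has leading coefficient $a\in K^{*}$, the substitution $x\mapsto ax$, $y\mapsto a^{g+1}y$ produces a monic equation.

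For uniqueness, suppose $y^2=f(x)$ and $(y')^2=f'(x')$ are two such monic equations. Both $x,x'$ lie in $L(2P_\infty)$ with pole order exactly $2$, so $x=\alpha x'+\beta$ with $\alpha\in K^{*}$, $\beta\in K$. Both $y,y'$ lie in $L((2g+1)P_\infty)$ with pole order exactly $2g+1$, and $\{1,x',\ldots,(x')^g,y'\}$ is a basis of that space, so $y=\mu y'+p(x')$ with $\mu\in K^{*}$ and $\deg p\leq g$. Plugging into $y^2=f(x)$ and using $(y')^2=f'(x')$, the coefficient of $y'$ on the left side must vanish, which forces $p=0$. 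Comparing the top coefficients of $x'^{2g+1}$ then gives $\mu^2=\alpha^{2g+1}$, whence $\alpha=(\alpha^{g+1}/\mu)^2$ is a square in $K^{*}$. Writing $\alpha=u^2$ yields $\mu=\pm u^{2g+1}$, and replacing $u$ by $-u$ if necessary recovers the stated form $x=u^2 x'+r$ and $y=u^{2g+1}y'$.

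The main technical point is the last one: Riemann--Roch \emph{a priori} only produces changes of coordinates of the form $(x,y)=(\alpha x'+\beta,\mu y'+p(x'))$ with $\alpha,\mu\in K^{*}$ and $p$ a polynomial of degree $\leq g$. It is only the combination of (i) having completed the square, which forces $p=0$, together with (ii) both models being monic, which forces $\alpha^{2g+1}=\mu^2$ and hence $\alpha\in (K^{*})^{2}$ because $\gcd(2g+1,2)=1$, that cuts the group of admissible changes of coordinates down to exactly those in the statement.
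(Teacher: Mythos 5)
This proposition is quoted in the paper without proof --- it is imported verbatim as \cite[Proposition 1.2]{lockhart} --- so there is no in-paper argument to compare against; measured against Lockhart's original proof, your argument is correct and essentially the same: the standard Riemann--Roch computation at the distinguished point, giving the basis $1,x,\ldots,x^{2g+1},y,xy,\ldots,x^gy$ of $L((4g+2)P_\infty)$ by pole orders, completing the square (possible as $\operatorname{char}K\neq 2$, which is why the extra term $y\mapsto y+t(x)$ in Lockhart's general statement disappears here), and for uniqueness the observation that $\mu^2=\alpha^{2g+1}$ with $\gcd(2,2g+1)=1$ forces $\alpha\in K^{*2}$. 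You also correctly flagged the one genuine subtlety hidden in the paper's phrasing: an odd-degree model requires a $K$-rational Weierstrass point, and the uniqueness statement is relative to a \emph{fixed} such point serving as the common point at infinity --- in Lockhart's paper this is built into the definition (his hyperelliptic curves are pointed), and your uniqueness argument implicitly and correctly uses that both $x,x'$ and $y,y'$ are taken in Riemann--Roch spaces of the \emph{same} $P_\infty$; without fixing the point, two models obtained by sending different rational Weierstrass points to infinity would be related by a fractional linear change in $x$, not an affine one. The only step stated tersely is that smoothness of $C$ forces $f$ separable; this is standard, and if you want it in the same Riemann--Roch spirit: a repeated root $f=(x-e)^2h$ would give $z=y/(x-e)\in K(C)$ with pole order $2g-1$ at $P_\infty$, which is a Weierstrass gap there, a contradiction.
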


\begin{theorem}\label{isotriviality}
Let $K$ be a global function field of characteristic $p\neq 2$. Let $\gamma,c\in K$ be distinct elements not both in $\overline{\F}_p$ and $\phi=(x-\gamma)^2+c\in K[x]$. Suppose that $\phi^n$ is separable for every $n$. For every $m\geq 1$, let $C_m\colon y^2=\phi^m(x)$. Then the following hold.
\begin{enumerate}
\item Suppose that $\gamma$ does not equal $c+t\sqrt{-c}$ for any $t\in \overline{\F}_p$. Then the curve $C_2$ is smooth and not isotrivial.
\item Suppose that $\gamma=c+t\sqrt{-c}$ for some $t\in \overline{\F}_p^*$. Then the curve $C_3$ is smooth and not isotrivial.
\item The curve $C_m$ is smooth and not isotrivial for every $m\geq 3$.
\end{enumerate}
\end{theorem}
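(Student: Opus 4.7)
The plan is as follows. Smoothness of $C_m$ is immediate from the hypothesis that $\phi^m$ is separable. For non-isotriviality, I would first reduce (3) to (1) and (2): for every $1 \leq n < m$ there is a natural non-constant morphism $C_m \to C_n$ given by $(x,y) \mapsto (\phi^{m-n}(x), y)$, so by Proposition \ref{isotrivial_map} non-isotriviality of $C_n$ propagates to $C_m$. Under the hypothesis $\gamma \neq c$, exactly one of (1) or (2) holds; thus (1) delivers non-isotriviality of $C_m$ for every $m \geq 2$ and (2) does so for every $m \geq 3$, together giving (3).

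For (1), the curve $C_2$ has genus $1$, so its isotriviality is equivalent to its $j$-invariant lying in $\overline{\F}_p$, which in turn is equivalent to the cross-ratio $\lambda$ of the four roots of $\phi^2$ lying in $\overline{\F}_p$ (modulo the $S_3$-action on cross-ratios). After the translation $U = x - \gamma$, one has $\phi^2(x) = (U^2 + (c - \gamma))^2 + c$, whose four roots in $U$ are $\pm\sqrt{(\gamma - c) \pm \sqrt{-c}}$. A direct calculation gives $\lambda = (A + B)/(A - B)$ where $A = \gamma - c$ and $B = \sqrt{A^2 + c}$, and one checks that $\lambda \in \overline{\F}_p$ is equivalent to $(\gamma - c)^2/(-c) \in \overline{\F}_p$, i.e., to $\gamma = c + t\sqrt{-c}$ for some $t \in \overline{\F}_p$. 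The hypothesis of (1) rules this out, so $\lambda \notin \overline{\F}_p$ and $C_2$ is not isotrivial.

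For (2), write $\gamma = c + t\sqrt{-c}$ with $t \in \overline{\F}_p^*$ and set $s = \sqrt{-c}$; since $\gamma, c$ cannot both lie in $\overline{\F}_p$, we have $c \notin \overline{\F}_p$, hence $s \notin \overline{\F}_p$. The polynomial $\tilde\phi(y) := \phi(y + \gamma) - \gamma = y^2 - st$ is even in $y$, so $\phi^n(x) - \gamma = \tilde\phi^n(x - \gamma)$ is even in $x - \gamma$; iterated square-root extraction then exhibits the $8$ roots of $\phi^3$ as $\gamma \pm u_i$, $i = 1, \ldots, 4$, where $u_i^2 \in \{st \pm p,\, st \pm q\}$ with $p^2 = s(t+1)$ and $q^2 = s(t-1)$. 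Since $C_3$ has genus $3$, its hyperelliptic involution is unique, and isotriviality forces the $8$-point branch configuration to be $PGL_2(\overline{K})$-equivalent to one contained in $\mathbb{P}^1(\overline{\F}_p)$; in particular every $4$-point cross-ratio must lie in $\overline{\F}_p$. Computing the cross-ratio of $\{u_1, -u_1, u_2, -u_2\}$ gives $\rho = (st - u_1 u_2)/(st + u_1 u_2)$; assuming $\rho \in \overline{\F}_p$ and combining with $u_1^2 u_2^2 = s^2 t^2 - s(t+1)$, one solves explicitly $s = (t+1)(1+\rho)^2/(4 t^2 \rho) \in \overline{\F}_p$, contradicting $s \notin \overline{\F}_p$.

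The main obstacle is case (2): in that regime $C_2$ is itself isotrivial (its $j$-invariant lands in $\overline{\F}_p$ precisely because of the case-(2) condition), so one cannot simply leverage a morphism to a lower iterate. The required input is to genuinely detect a modulus of $C_3$ that escapes $\overline{\F}_p$, and the even symmetry of $\tilde\phi^n$ in $y$, which pairs the branch points as $\{\pm u_i\}$ and keeps the auxiliary quantities $u_i^2$ in a manageable quadratic extension, is the key structural observation that makes the cross-ratio computation tractable.
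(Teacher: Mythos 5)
Your proposal is correct, but for parts (1) and (2) it follows a genuinely different route from the paper's; only the reduction of (3) to (1) and (2) --- via the morphisms $(x,y)\mapsto(\phi^{m-n}(x),y)$, Proposition \ref{isotrivial_map}, and the dichotomy forced by $\gamma\neq c$ (which rules out $t=0$) --- coincides with the paper. The paper never analyzes the moduli of $C_2$ and $C_3$ directly: it passes to auxiliary odd-degree hyperelliptic curves $E\colon y^2=(x-c)\phi(x)$ (genus $1$) and $\widetilde{C}_3\colon y^2=(x-c)\phi^2(x)$ (genus $2$), which receive non-constant maps $(x,y)\mapsto(\phi(x),y(x-\gamma))$ from $C_2$ and $C_3$ (using $\phi(x)-c=(x-\gamma)^2$), and then invokes Proposition \ref{unique_equation}: an isotrivial odd-degree model is related to a constant one by $x=u^2x'+r$, $y=u^{2g+1}y'$, so matching the three (resp.\ five) roots $c,\gamma\pm\sqrt{-c}$ (resp.\ $c,\gamma\pm\sqrt{\gamma-c\pm\sqrt{-c}}$) against constant roots $x_i$ determines $u^2$ and forces $\gamma=c+t\sqrt{-c}$ with $t$ constant (resp.\ $\sqrt{-c}\in\overline{\F}_p$), the desired contradiction. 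You instead detect isotriviality of $C_2$ and $C_3$ themselves through $\mathrm{PGL}_2$-invariants of the branch locus: the $j$-invariant/cross-ratio criterion in genus $1$, and in genus $3$ the uniqueness of the hyperelliptic involution, which forces every four-point cross-ratio of the eight branch points into $\overline{\F}_p$. The two arguments are cousins --- the paper's affine root-matching $x=u^2x'+r$ is cross-ratio invariance in disguise --- but yours dispenses with the auxiliary curves and with Proposition \ref{unique_equation} entirely, and yields as a by-product the clean ``iff'': $C_2$ is isotrivial exactly in the case-(2) regime (the paper proves the analogous ``iff'' for $E$ instead), which explains why $C_3$ is genuinely needed there; the paper's version buys lower-degree computations (cubic/quintic rather than quartic/octic root configurations) and needs no facts about hyperelliptic involutions and branch divisors. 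Two small points to fix in a write-up: your letters $p,q$ for $\sqrt{s(t\pm1)}$ clash with the characteristic $p$, and you should record that the degenerate values that would break your manipulations ($u_i=0$, $u_1^2=u_2^2$, i.e.\ $t=\pm1$, and $\rho\in\{0,-1\}$) are all excluded by the separability of $\phi^2$ and $\phi^3$ --- just as $c\notin\overline{\F}_p$, hence $s\notin\overline{\F}_p$, follows (as you note) from $\gamma,c$ not both constant.
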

\begin{proof}
The smoothness part of the statements follows from the separability of $\phi^n$. Notice that for every $m,n\geq 1$ with $m\geq n$, there is a non-constant morphism $C_m\to C_n$ given by $(x,y)\mapsto (\phi^{m-n}(x),y)$. Hence (3) follows from (1), (2) and Proposition \ref{isotrivial_map}. 

$(1)$ Let $E\colon y^2=(x-c)\phi(x)$. This is a smooth curve, because the roots of $(x-c)\phi(x)$ are $c$ and $\gamma\pm\sqrt{-c}$. It follows that the only possibility for $E$ not being smooth is that $c=\gamma\pm\sqrt{-c}$, which is forbidden by hypothesis. There exists a non-constant morphism
$$C_2\to E$$
$$(x,y)\mapsto (\phi(x),y(x-\gamma))$$
Hence, if we show that $E$ is not isotrivial then the non-isotriviality of $C_2$ follows by Proposition \ref{isotrivial_map}.

Now we claim that $E$ is isotrivial if and only if there exists $t\in \overline{\F}_p$ such that $\gamma=c+t\sqrt{-c}$.

First, assume that $E$ is isotrivial. By Proposition \ref{unique_equation} this amounts to say that over $\overline{K}$, $E$ has an equation of the form $y'^2=g(x')$, where $g(x')\in \overline{\F}_p[x']$ is a monic, cubic polynomial and the two Weierstrass equations for $E$ are related by a change of coordinates of the form $x=u^2x'+r$, $y=u^3y'$ for some $u,r\in \overline{K}$ with $u\neq 0$. Again, this amounts to say that $f(u^2x'+r)/u^6=g(x')$, where $f=(x-c)\phi(x)$. But then let $x_1,x_2,x_3\in \overline{\F}_p$ be the roots of $g(x')$. These must correspond, through the aforementioned change of coordinates, to the roots of $f$, which are $c,\gamma\pm\sqrt{-c}$. Without loss of generality we can assume that $\gamma+\sqrt{-c}=u^2x_1+r$ and $\gamma-\sqrt{-c}=u^2x_2+r$. Subtracting term by term, it follows that $u^2=\frac{2\sqrt{-c}}{x_1-x_2}$. Now necessarily $c=u^2x_3+r$, and subtracting term by term the relation for $\gamma+\sqrt{-c}$ one gets that $\gamma+\sqrt{-c}-c=\frac{2\sqrt{-c}}{x_1-x_2}(x_1-x_3)$. We conclude immediately that $\gamma=c+t\sqrt{-c}$ for a constant $t\in \overline{\F}_p$.

Conversely, assume that there exists $t\in \overline{\F}_p$ such that $\gamma=c+t\sqrt{-c}$. Let $u\coloneqq \sqrt[4]{-c}$ (note that $u\neq 0$ since $0$ is not in the critical orbit of $\phi$). Then the change of coordinates $x=u^2x'+c$, $y=u^3y'$ brings the curve $E$ to the form $y'^2=x'((x'-t)^2-1)$, which is defined over $\overline{\F}_p$.

$(2)$ The proof follows the very same logic of that of part $(1)$. Let $\widetilde{C}_3\colon y^2=(x-c)\phi^2(x)$. First, we need to verify that this is a smooth curve of genus 2, which is equivalent to ask that $(x-c)\phi^2(x)$ has pairwise distinct roots. Of course $\phi^2(x)$ already has this property by hypothesis. Hence, this can only fail for $(x-c)\phi^2(x)$ if $c=\gamma\pm\sqrt{\gamma-c\pm\sqrt{-c}}$. However, this relation would imply that $(\gamma-c)^2=\gamma-c\pm\sqrt{-c}$. Now recalling that $\gamma-c=t\sqrt{-c}$ for some $t\in \overline{\F}_p^*$, we get that $t^2(\sqrt{-c})^2-(t\pm1)\sqrt{-c}=0$, which implies that $\sqrt{-c}\in \overline{\F}_p^*$. This is impossible because it would imply that $c,\gamma\in \overline{\F}_p$.

Now notice that there is a non-constant morphism $C_3\to \widetilde{C}_3$ given by $(x,y)\mapsto (\phi(x),y(x-\gamma))$. Thus by Proposition \ref{isotrivial_map} it is enough to show that $\widetilde{C}_3$ is not isotrivial. Once again, by contradiction suppose it is. Then by Proposition \ref{unique_equation} $\widetilde{C}_3$ has an equation over $\overline{K}$ of the form $y'^2=h(x')$ where $h(x')\in \overline{\F}_p[x]$ is a monic polynomial of degree 5, and the two equations are related by a change of coordinates of the form $x=u^2x'+r$, $y=u^5y'$ for some $u,r\in \overline{K}$ with $u\neq 0$. By the same logic of part $(1)$, if $x_1,\ldots,x_5\in \overline{\F}_p$ are the roots of $h(x')$ we must have, up to reordering the roots, that $\gamma+\sqrt{\gamma-c+\sqrt{-c}}=u^2x_1+r$ and $\gamma-\sqrt{\gamma-c+\sqrt{-c}}=u^2x_2+r$. These together imply that $u^2=\frac{2\sqrt{\gamma-c+\sqrt{-c}}}{x_1-x_2}$. On the other hand, $c=u^2x_3+r$ and subtracting from the relation for $x_1$ we get that
$$\gamma-c+\sqrt{\gamma-c+\sqrt{-c}}=u^2(x_1-x_3)=k\sqrt{\gamma-c+\sqrt{-c}},$$
where $k\coloneqq \frac{2(x_1-x_3)}{x_1-x_2}$. This implies that $(\gamma-c)^2=(k-1)^2(\gamma-c)+(k-1)\sqrt{-c}$. Now recall that $\gamma-c=t\sqrt{-c}$ for some $t\in \overline{\F}_p^*$, and therefore $t^2(\sqrt{-c})^2-(k-1)(t(k-1)-1)\sqrt{-c}=0$. Thus $\sqrt{-c}$ is constant and consequently $c$ and $\gamma$ are both constant, contradicting the hypotheses.
\end{proof}

It is worth noticing that Theorem \ref{isotriviality} yields a stronger conclusion than that of \cite[Conjecture 3.1]{hindes1} for quadratic polynomials. In fact, such conjecture requires $\phi$ to be non-isotrivial, which means that there are no $r\in K$ such that $\phi(x+r)-r\in \overline{\F}_p[x]$. However, we only require that $\phi$ is not of the form $(x-\gamma)^2+\gamma$ and that $\phi\notin \overline{\F}_p[x]$, which is clearly a weaker condition. Clearly this condition is also necessary: if $\phi=(x-\gamma)^2+\gamma$ or $\phi\in \overline{\F}_p[x]$ it is immediate to verify that $C_m$ is isotrivial for every $m\geq 1$.

\begin{remark}
Theorem \ref{isotriviality} immediately implies that every $\phi$ satisfying its hypotheses also satisfies those of \cite[Theorem 1.1]{hindes2}, and therefore in turn it implies, via an effective version of Mordell conjecture, a uniform bound on the Zsigmondy set for wandering points for all such polynomials. This highlights the upshot of working over global function fields: there are more conditions to control, but stronger results are available. Moreover, not only this is an interesting result on its own, but it also implies, under suitable stability assumptions, bounds on the index of the arboreal Galois representation attached to such polynomials. This can be seen by adapting the arguments used for example in \cite{ferra4,hindes3,hindes2}.
\end{remark}

We are now ready to state and prove the main result of this section.
\begin{theorem}\label{PCF_equivalence}
 Let $K$ be a global field of characteristic not 2, $f\in K[x]$ be monic and quadratic and $\alpha\in K$ not in the post-critical orbit of $f$. Then the following are equivalent.
\begin{enumerate}
\item $f$ is post-critically finite;
\item $K_\infty(f,\alpha)$ ramifies at finitely many places;
\item $\im(\rho_{f,\alpha})$ is topologically finitely generated.
\item the sub-vector space $\langle c_{1,\alpha},\ldots,c_{n,\alpha},\ldots\rangle\subseteq K^*/{K^*}^2$ is finite dimensional.
\end{enumerate}
\end{theorem}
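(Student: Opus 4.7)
\emph{Proof plan.} The plan is to prove the cycle of implications $(1) \Rightarrow (2) \Rightarrow (3) \Rightarrow (4) \Rightarrow (1)$. The first implication is the known direction attributed to Bridy \cite{bridy}: if $f$ is PCF then $\{c_n\}$ is finite, and a standard discriminant computation for the polynomials $f^n - \alpha$ shows that $K_\infty(f,\alpha)/K$ is unramified outside the finite set consisting of primes dividing some $c_{i,\alpha}$, primes above $2$, and, when $K$ is a number field, the archimedean primes. For $(2) \Rightarrow (3)$, I would let $S$ be such a finite set, note that $K_\infty(f,\alpha)$ is contained in the maximal pro-$2$ extension $K_S^{(2)}$ of $K$ unramified outside $S$, and invoke Shafarevich's theorem over number fields (and its tame analogue for global function fields of characteristic $\neq 2$) to conclude that $\gal(K_S^{(2)}/K)$ is topologically finitely generated; this property is inherited by continuous quotients.

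For $(3) \Rightarrow (4)$ I plan to use Proposition \ref{powerful_corollary} together with Pontryagin duality. Define the $\F_2$-linear map $\psi \colon \bigoplus_n \F_2 \to K^*/K^{*2}$ by $(v_n) \mapsto \prod c_{n,\alpha}^{v_n}$, whose image is $\langle c_{1,\alpha}, c_{2,\alpha}, \ldots\rangle$. By Proposition \ref{powerful_corollary}, $\ker \psi = \{v : \im(\rho_{f,\alpha}) \subseteq M_v\}$ is exactly the annihilator in $\bigoplus_n \F_2$ of the image $G'$ of $G \coloneqq \im(\rho_{f,\alpha})$ inside $\Omega_\infty^{\text{ab}} \cong \prod_n \F_2$. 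If $G$ is topologically finitely generated as a pro-$2$ group then its Frattini quotient $G/\Phi(G)$ is finite; and $G'$, being a continuous quotient of $G$ of exponent $2$, factors through $G/\Phi(G)$ and is therefore also finite. Pontryagin duality between $\prod_n \F_2$ and $\bigoplus_n \F_2$ then forces $\ker \psi$ to have finite codimension in $\bigoplus_n \F_2$, and hence $\mathrm{im}(\psi) = \langle c_{n,\alpha}\rangle$ is finite-dimensional.

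The substantive direction is $(4) \Rightarrow (1)$, which I plan to prove by contrapositive. All four conditions are invariant under $K$-conjugation — for $(4)$ this uses that the conjugation with $u = 1$, $v = \alpha$ yields $c_{n,0}^g = c_{n,\alpha}^f$ for every $n \geq 1$ — so I may replace $(f,\alpha)$ by $(g, 0)$ and assume $\alpha = 0$ from the outset; after renaming, $f$ is still non-PCF, $0$ is not in its critical orbit, and $c_n = f^n(a)$ for every $n \geq 2$. If $\langle c_n\rangle$ were finite-dimensional, the pigeonhole principle would produce $\lambda \in K^*$ and an infinite set $T$ of integers $n \geq 5$ such that $c_n = \lambda z_n^2$ for some $z_n \in K^*$. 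Each identity gives a $K$-rational point $(a, z_n)$ on the quadratic twist $E_n \colon \lambda y^2 = f^n(x)$, and the dynamical covering $(x, y) \mapsto (f^{n-3}(x), y)$ sends it to the $K$-point $(c_{n-3}, z_n)$ on $E_3 \colon \lambda y^2 = f^3(x)$. The separability of $f^3$ makes $E_3$ a smooth hyperelliptic curve of genus $3$, and since $f$ is not PCF the values $c_{n-3}$ are pairwise distinct as $n$ ranges over $T$, so $E_3(K)$ is infinite. Over a number field this contradicts Faltings's theorem. Over a global function field of odd characteristic $p$ I first observe that $f \notin \overline{\F}_p[x]$ — otherwise $f$ would have critical orbit inside the finite constant subfield of $K$ and would itself be PCF — which is the hypothesis of Theorem \ref{isotriviality}; part $(3)$ of that theorem shows that $y^2 = f^3(x)$ is non-isotrivial, and since quadratic twists preserve non-isotriviality the function-field Mordell--Samuel theorem applies to $E_3$ and delivers the contradiction.

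The key obstacle is this positive-characteristic case of $(4) \Rightarrow (1)$: without non-isotriviality the Mordell--Samuel theorem is unavailable, and this is exactly why Theorem \ref{isotriviality} was established earlier in the section. With that input in hand, the characteristic-$p$ argument becomes formally identical to the number-field one.
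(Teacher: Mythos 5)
Your proposal is correct and takes essentially the same route as the paper: the same cycle $(1)\implies(2)\implies(3)\implies(4)\implies(1)$, with your duality/annihilator phrasing of $(3)\implies(4)$ equivalent to the paper's pigeonhole on the characters $\phi_n$ via Proposition \ref{powerful_corollary}, your Shafarevich-type argument simply supplying a proof of the result the paper cites from Jones for $(2)\implies(3)$, and your $(4)\implies(1)$ differing only cosmetically in that you pigeonhole on square classes and twist a fixed curve $\lambda y^2=f^3(x)$ over $K$, where the paper instead finds infinitely many $L$-points on $y^2=g^n(x)$ over a quadratic extension $L=K\bigl(\sqrt{c_{1,\alpha}^{e_1}\cdots c_{n,\alpha}^{e_n}}\bigr)$ --- both resting on Theorem \ref{isotriviality} and Faltings/Samuel. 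One tiny remark: Theorem \ref{isotriviality} also requires $\gamma\neq c$, which your parenthetical (ruling out $f\in\overline{\F}_p[x]$) does not address, but $\gamma=c$ would make the critical point fixed and hence $f$ PCF, so this follows from your standing non-PCF assumption at the same level of detail as the paper's own verification.
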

\begin{proof}
(1)$\implies$ (2) This follows from the well-known fact that if a prime $\p$ ramifies in $K_\infty(f,\alpha)$, then $\p\mid (2)$ or there exists $n\geq 1$ such that $v_\p(c_{n,\alpha})\neq 0$. For a stronger statement, see  \cite[Lemma 2.6]{jones_divisors}.

(2)$\implies$ (3) This is proven in \cite[Theorem 3.1]{jones_survey}. Notice that for a pro-2-group, being topologically generated by finitely many conjugacy classes is equivalent to being topologically finitely generated.

(3)$\implies$ (4) Let $G\coloneqq \im(\rho_{f,\alpha})$. Since $G$ is topologically finitely generated then the group $\hom_{\text{cont}} (G,\F_2)$ is finite. The inclusion $G\hookrightarrow \Omega_\infty$ induces by restriction a map $r\colon\Omega_\infty^{\vee}\to \hom_{\text{cont}} (G,\F_2)$. Since $\Omega_\infty^{\vee}$ is generated by all the $\phi_n$'s, it follows that there exists some $n_0\in \N$ such that for every $n>n_0$, $r(\phi_n)=r(\phi_m)$ for some $m\leq n_0$. This amounts to say that for every $n>n_0$ there exists $m\leq n_0$ such that $G\leq M_{v_{m,n}}$, where $v_{m,n}\in\bigoplus_{\Z_{\geq 1}}\F_2$ is the vector having 1 precisely at the $m$-th and $n$-th entry, and 0 elsewhere. But then, by Proposition \ref{powerful_corollary}, it follows that $c_{m,\alpha}c_{n,\alpha}\in K^{*2}$. This clearly shows that $\{c_{1,\alpha},\ldots,c_{n_0,\alpha}\}$ generates $\langle c_{1,\alpha},\ldots,c_{n,\alpha},\ldots\rangle$.

(4)$\implies$(1) Suppose by contradiction that $f$ is not post-critically finite. Notice that $\{c_{n,\alpha}\}$ is the adjusted post-critical orbit of the polynomial $g\coloneqq f(x+\alpha)-\alpha\in K[x]$. Hence, $g$ is also not post-critically finite. Now choose $n\geq 3$ such that $\{c_{1,\alpha},\ldots,c_{n,\alpha}\}$ generates $\langle c_{i,\alpha}\rangle_{i\in \N}$, and let $C_n\colon y^2=g^n(x)$. This is a smooth curve, because $c_{i,\alpha}\neq 0$ for every $i$, and has genus at least 2. If $K$ is a global function field, notice moreover that since $g$ is not post-critically finite, then it fulfils the hypotheses of Theorem \ref{isotriviality}. It follows that $C_n$ is a non-isotrivial curve. Since $\{c_{1,\alpha},\ldots,c_{n,\alpha}\}$ generates $\langle c_{i,\alpha}\rangle_{i\in \N}$, by pigeonhole there exist $e_1,\ldots,e_n\in \{0,1\}$  such that $c_{1,\alpha}^{e_1}\ldots c_{n,\alpha}^{e_n}c_{m,\alpha}$ is a square in $K$ for infinitely many $m$'s. Then there are infinitely many $m$'s such that in the global field $L\coloneqq K\left(\sqrt{c_{1,\alpha}^{e_1}\ldots c_{n,\alpha}^{e_n}}\right)$ the element $c_{m,\alpha}$ is a square. Since $g$ is post-critically infinite, this implies that $C_n$ has infinitely many $L$-rational points, contradicting Mordell's conjecture.
\end{proof}
\begin{remark}
Observe that the implication $(3) \implies (1)$ in Theorem \ref{PCF_equivalence} is clearly false for a general field $K$. Indeed there are several fields with finitely generated absolute Galois group and admitting non post-critically finite quadratic polynomials, e.g.\ algebraically closed fields of characteristic $0$ or local fields.

On the other hand, from the proof above it is clear that the implication $(3)\implies (4)$ holds for any ground field of characteristic not 2.
\end{remark}

\section{Abelian dynamical Galois groups}\label{sec:abelian}

In this section we focus on quadratic polynomials having abelian dynamical Galois group. The question has been raised in a recent preprint by Andrews and Petsche \cite{andrews}. There, the authors propose a conjecture \cite[Conjecture 1]{andrews} that characterizes pairs $(f,\alpha)$, where $f$ is a polynomial over a number field $K$ and $\alpha\in K$, such that $K_\infty(f,\alpha)$ is an abelian extension of $K$. In \cite{andrews}, they prove their conjecture for quadratic \emph{stable} pairs $(f,\alpha)$ in $\Q$; this amounts to say that $f$ is a quadratic polynomial over $\Q$, $\alpha\in \Q$ and $f^n-\alpha$ is irreducible for every $n$.

Here, we will first show that if $K$ is a global field of characteristic not 2, $\alpha\in K$ and $f\in K[x]$ is a quadratic polynomial such that $G_\infty(f,\alpha)$ is abelian, then $f$ is PCF. Next, we show that \cite[Conjecture 1]{andrews} holds for any quadratic pair $(f,\alpha)$ over $\Q$. Notice that we do not require any stability assumption. Moreover, our proof technique differs heavily from that of \cite{andrews}: ours is based on exploiting, through local class field theory, a result of Anderson et al.\ \cite{anderson}.

Finally, as another application of our method, we will prove that when $K$ is a global function field of odd characteristic, the only quadratic pairs $(f,\alpha)$ yielding abelian dynamical Galois groups are the ones coming up to conjugation from constant pairs, i.e.\ pairs $(g,\beta)$ such that $\beta$ and the coefficients of $g$ are all constant. 

As a first thing, we need the following proposition from \cite{ferra5}. Recall that the continuous homomorphism $\phi_1\colon \Omega_\infty\to \F_2$ is defined as $\phi_1(\sigma)=0$ if and only if $\sigma$ does not swap the two halves of the tree. 

\begin{proposition}{{\cite[Proposition 7.3]{ferra5}}}\label{commutativity}
Let $\sigma,\tau\in \Omega_\infty$ be such that $\phi_1(\tau)=1$ and $\sigma$ is not in $\{1,\tau\}$ modulo commutators. Then $\sigma$ and $\tau$ do not commute.
\end{proposition}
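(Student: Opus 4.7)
The plan is to argue by contrapositive: assuming $\sigma$ and $\tau$ commute, I will show that $\sigma$ must be congruent to either $1$ or $\tau$ modulo the commutator subgroup of $\Omega_\infty$, contradicting the hypothesis. The natural tool is the wreath product decomposition $\Omega_\infty \cong \Omega_\infty \wr \Z/2\Z$ at the root of the tree, under which every element $\rho$ corresponds to a triple $(\rho_L, \rho_R, \epsilon_\rho)$, where $\rho_L, \rho_R \in \Omega_\infty$ describe the action on the two maximal subtrees below the root and $\epsilon_\rho = \phi_1(\rho) \in \F_2$ is the root-level swap bit. Write $\sigma = (s_1, s_2, \epsilon_\sigma)$ and $\tau = (t_1, t_2, 1)$.

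The main calculation is to expand $\sigma\tau$ and $\tau\sigma$ using the wreath multiplication rule and read off what commutativity imposes on the pair $(s_1, s_2)$, splitting on $\epsilon_\sigma$. In the case $\epsilon_\sigma = 0$, a direct computation shows that commutation is equivalent to an identity of the shape $s_2 = t_1 s_1 t_1^{-1}$, exhibiting $s_2$ as a conjugate of $s_1$. In the case $\epsilon_\sigma = 1$, commutation is equivalent to an identity of the shape $s_2 = t_2 s_1 t_1^{-1}$ (the exact form is sensitive to the convention chosen for the wreath product, but the qualitative content is the same).

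The final step is to feed these relations into the abelianization. From the digital representation one reads off the identity $\phi_n(\rho) = \phi_{n-1}(\rho_L) + \phi_{n-1}(\rho_R)$ for every $n \geq 2$ and every $\rho \in \Omega_\infty$; the root swap $\epsilon_\rho$ contributes only to $\phi_1$. Since each $\phi_{n-1}$ factors through $\Omega_\infty^{\text{ab}}$, it is both conjugation-invariant and additive. In the case $\epsilon_\sigma = 0$ this gives $\phi_{n-1}(s_1) = \phi_{n-1}(s_2)$ and hence $\phi_n(\sigma) = 0$ for every $n \geq 2$; combined with $\phi_1(\sigma) = 0$, this forces $\sigma$ into the commutator subgroup, i.e.\ $\sigma \equiv 1$. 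In the case $\epsilon_\sigma = 1$ one obtains $\phi_{n-1}(s_2) = \phi_{n-1}(s_1) + \phi_{n-1}(t_1) + \phi_{n-1}(t_2)$, so that $\phi_n(\sigma) = \phi_n(\tau)$ for every $n \geq 2$; combined with $\phi_1(\sigma) = 1 = \phi_1(\tau)$, this yields $\sigma \equiv \tau$ modulo commutators. Either alternative contradicts the hypothesis, completing the proof. The only genuine technical nuisance is keeping the wreath product conventions straight long enough to establish the key formula $\phi_n(\rho) = \phi_{n-1}(\rho_L) + \phi_{n-1}(\rho_R)$; once that is in place the rest is direct bookkeeping.
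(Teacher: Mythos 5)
Your proof is correct, but note that there is nothing in this paper to compare it against: Proposition \ref{commutativity} is imported verbatim from \cite[Proposition 7.3]{ferra5} and is not reproved here, so your argument has to stand on its own --- and it does. The wreath computation checks out: writing $\sigma=(s_1,s_2)s^{\epsilon_\sigma}$ and $\tau=(t_1,t_2)s$ with $s$ the root swap, commutativity is equivalent to the pair $s_1t_1=t_1s_2$, $s_2t_2=t_2s_1$ when $\epsilon_\sigma=0$, and to $s_1t_2=t_1s_2$, $s_2t_1=t_2s_1$ when $\epsilon_\sigma=1$; a single relation from each pair suffices for your purposes, and the exact placement of inverses (which, as you say, depends on the wreath convention) is immaterial because you only ever pass to $\phi_{n-1}$, which is conjugation-invariant and additive. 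Your key identity $\phi_n(\rho)=\phi_{n-1}(\rho_L)+\phi_{n-1}(\rho_R)$ for $n\geq 2$ is likewise convention-proof: whatever the labelling, the multiset of portrait labels of $\rho$ at level $n$ is the disjoint union of those of $\rho_L$ and $\rho_R$ at level $n-1$, and $\phi_n$ only sees the sum. The two cases then give $\widehat{\phi}(\sigma)=0$ and $\widehat{\phi}(\sigma)=\widehat{\phi}(\tau)$ respectively, which is the contrapositive of the statement.

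One small caveat worth making explicit: your contrapositive places $\sigma$ in $\ker\widehat{\phi}$, i.e.\ in the \emph{closed} commutator subgroup, so ``modulo commutators'' in the statement must be read as ``modulo $\ker\widehat{\phi}$''. This is exactly the reading the paper relies on --- it introduces $\widehat{\phi}$ as the map to $\Omega_\infty^{\text{ab}}\cong\prod_{n\geq 1}\F_2$ and applies the proposition only through images under $\widehat{\phi}$ (as in Proposition \ref{abelian subgroups are of infinite index}) --- so no gap results, but if you wanted the statement for the abstract commutator subgroup you would need the additional (true, but nontrivial) fact that the derived subgroup of $\Omega_\infty$ coincides with $\ker\widehat{\phi}$.
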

We now apply this proposition to show that closed abelian subgroups of $\Omega_{\infty}$ have infinite index therein. We first make a definition which will be used to obtain a rather more precise conclusion. For every $n\geq 1$, we denote by $\Omega_n$ the automorphism group of the infinite, regular, rooted binary tree truncated at level $n$.
\begin{definition} \label{Level of a subgroup}
Let $G$ be a closed non-trivial subgroup of $\Omega_{\infty}$. We call the \emph{level} of $G$ the smallest non-negative integer $n$ such that the image of $G$ in $\Omega_{n+1}$ via the canonical projection $\Omega_\infty\to \Omega_{n+1}$ is non-trivial. 
\end{definition}
The following simple remark will play an important role.
\begin{remark} \label{first non-trivial}
Let $G$ be a closed non-trivial subgroup of $\Omega_{\infty}$ and let $n$ be its level. Let $V_n$ be the set of nodes at distance $n$ from the root of the tree. Observe that, by construction, $G$ preserves each of the $2^n$ trees rooted in the nodes of $V_n$. For each $w$ in $V_n$, we denote by $\rho_w:G \to \Omega_{\infty}$ the homomorphism induced by the identification between $T_{\infty}$ and $wT_{\infty}$ given by $v \mapsto wv$. By definition of level, there must be at least one $w_0$ in $L_n$ such that $\phi_1 \circ \rho_{w_0}(G) \neq \{0\}$. We call such a $w_0$ a \emph{faithful} node for $G$. 
\end{remark}
\begin{proposition} \label{abelian subgroups are of infinite index}
Let $G$ be an abelian closed subgroup of $\Omega_{\infty}$. Then we have the following. 
\begin{enumerate}
\item If $G$ is non-trivial and of level $n$, then for each faithful node $w_0$ for $G$ in $V_n$ we have that $\rho_{w_0}(G)$ has $1$-dimensional image in $\Omega_{\infty}^{\emph{ab}}$. 
\item $G$ has infinite index in $\Omega_{\infty}$. 
\end{enumerate} 

\end{proposition}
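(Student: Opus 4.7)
The plan is to prove \textbf{(1)} by a direct application of Proposition \ref{commutativity} through the restriction homomorphism $\rho_{w_0}$, and then to deduce \textbf{(2)} from the structural fact that every open subgroup of the profinite group $\Omega_{\infty}$ must contain a non-abelian kernel.

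For part \textbf{(1)}, I would proceed as follows. Let $G$ be abelian and non-trivial of level $n$, and fix a faithful $w_0 \in V_n$. By Remark \ref{first non-trivial}, faithfulness of $w_0$ provides an element $\tau_0 \in G$ with $\phi_1(\rho_{w_0}(\tau_0)) = 1$; set $\tau \coloneqq \rho_{w_0}(\tau_0) \in \Omega_{\infty}$. For an arbitrary $\sigma_0 \in G$, let $\sigma \coloneqq \rho_{w_0}(\sigma_0)$. Commutativity of $G$ gives $[\sigma_0,\tau_0]=1$, and since $\rho_{w_0}$ is a group homomorphism we obtain $[\sigma,\tau]=1$ in $\Omega_{\infty}$. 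Because $\phi_1(\tau)=1$, Proposition \ref{commutativity} forces $\sigma$ to lie in $\{1,\tau\}$ modulo $[\Omega_{\infty},\Omega_{\infty}]$. Consequently the image of $\rho_{w_0}(G)$ in $\Omega_{\infty}^{\text{ab}}$ is contained in the $\F_2$-subgroup generated by the class of $\tau$; the condition $\phi_1(\tau)=1$ guarantees that this class is non-zero, hence the image is exactly $1$-dimensional.

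For part \textbf{(2)}, I would argue by contradiction. Assume $[\Omega_{\infty}:G]<\infty$. In a profinite group a closed subgroup of finite index is open, so $G$ would contain the open kernel $K_N \coloneqq \ker(\Omega_{\infty}\to\Omega_{N})$ for some $N\geq 0$. Under the canonical identification $K_N \cong \prod_{w\in V_N}\Omega_{\infty}$, this kernel is a direct product of copies of $\Omega_{\infty}$, which is itself non-abelian (already $\Omega_2\cong D_4$ is not abelian). But then $K_N \subseteq G$ would force $G$ to be non-abelian, contradicting the hypothesis. Hence $[\Omega_{\infty}:G]=\infty$.

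The only delicate step is \textbf{(1)}: the crux is to recognise that the faithfulness of $w_0$ is precisely what provides an element $\tau$ satisfying the hypothesis $\phi_1(\tau)=1$ of Proposition \ref{commutativity}, and that the homomorphism $\rho_{w_0}$ transports the abelianness of $G$ to commutativity inside $\Omega_{\infty}$ where the proposition lives. Part \textbf{(2)} is then an immediate structural consequence once one observes that no open subgroup of $\Omega_{\infty}$ can be abelian.
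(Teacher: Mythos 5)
Your proposal is correct. For part (1) you follow exactly the paper's route: the paper dismisses (1) as ``an immediate consequence of Proposition \ref{commutativity}'', and your write-up is the intended unwinding --- faithfulness of $w_0$ supplies $\tau=\rho_{w_0}(\tau_0)$ with $\phi_1(\tau)=1$, abelianness of $G$ is transported through the homomorphism $\rho_{w_0}$, and the contrapositive of Proposition \ref{commutativity} pins the image of $\rho_{w_0}(G)$ in $\Omega_\infty^{\text{ab}}$ to the line spanned by the class of $\tau$, which is non-zero because $\phi_1$ factors through the abelianization. For part (2), however, you take a genuinely different route: the paper deduces (2) from (1) (implicitly: $G$ of level $n$ sits inside the finite-index kernel $K_n$ of $\Omega_\infty\to\Omega_n$, the restriction $\rho_{w_0}$ is surjective from $K_n$ onto $\Omega_\infty$, so finite index of $G$ would force $\rho_{w_0}(G)$ to have finite index in $\Omega_\infty$, which is impossible since by (1) its image in $\Omega_\infty^{\text{ab}}\cong\prod_{m\geq 1}\F_2$ is $1$-dimensional), whereas your argument bypasses (1) and Proposition \ref{commutativity} entirely: closed plus finite index gives open, every open subgroup contains some kernel $K_N\cong\prod_{w\in V_N}\Omega_\infty$, and such a kernel is non-abelian. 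Your version is more elementary and self-contained, proving directly that $\Omega_\infty$ has no abelian open subgroups from nothing but the neighbourhood basis of kernels; the paper's route keeps the sharper quantitative fact that the obstruction is already visible in the abelianization, which is what gets reused in the proof of Theorem \ref{abelian_image}. One cosmetic caveat: you write $\Omega_2\cong D_4$, while the paper denotes the dihedral group of order $8$ by $D_8$ (see the proof of Theorem \ref{thm_abelian}); the group is the same and non-abelian under either naming convention, but you should match the paper's notation.
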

\begin{proof}
Part $(1)$ is an immediate consequence of Proposition \ref{commutativity}. Part $(2)$ is obvious if $G$ is the trivial group and follows immediately from part $(1)$ otherwise. 
\end{proof}
Let us now recall the definition of exceptional point.
\begin{definition} \label{exceptional point def}
Let $K$ be a field and let $\overline{K}$ be an algebraic closure. Let $f\in K[x]$ and $\alpha \in \overline{K}$. We say that $\alpha$ is \emph{exceptional} for $f(x)$ if the backward orbit $\bigcup_{n\geq 1}f^{-n}(\alpha)$ is a finite subset of $\overline{K}$.  
\end{definition}

Before stating the next lemma, let us remark the following elementary fact that will be used in the proof.
\begin{remark}\label{subquotient}

Let $f\in K[x]$, $\alpha\in K$ and $\beta \in f^{-n}(\alpha)$ for some $n$. The root $\beta$ belongs to a finite extension $L$ of $K$. Then the dynamical Galois group $G_\infty(f,\beta)$ (which is the inverse limit of the Galois groups of $f^n-\beta$ over $L$) is a subquotient of $G_\infty(f,\alpha)$: the Galois group $\gal(K_\infty(f,\alpha)/L)$ is a subgroup of $G_\infty(f,\alpha)$ admitting $G_\infty(f,\beta)$ as a quotient, by Galois theory.
\end{remark}

\begin{lemma}\label{finiteness}
Let $K$ be a global field of characteristic not 2, $f\in K[x]$ a monic, quadratic polynomial and $\alpha\in \overline{K}$.
\begin{enumerate}[(a)]
\item If $G_\infty(f,\alpha)$ is finite, then $\alpha$ belongs to the post-critical orbit of $f$.
\item If $f$ is post-critically infinite, then $G_\infty(f,\alpha)$ is infinite.
\item $G_\infty(f,\alpha)$ is finite if and only if $\alpha$ is exceptional.
\end{enumerate}
\end{lemma}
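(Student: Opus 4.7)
The plan is to prove (a), (b), and (c) in order, with a single Northcott-type height argument doing the main work.

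For (a), argue by contradiction. Suppose $G_\infty(f,\alpha)$ is finite but $\alpha\notin\{f^n(a)\}_{n\geq 1}$. Then $f^n(x)-\alpha$ is separable for every $n$ (the critical point $a$ is not a root of any iterate), so $|f^{-n}(\alpha)|=2^n$, and consequently the full backward orbit $T:=\bigcup_n f^{-n}(\alpha)$ is infinite. On the other hand $L:=K_\infty(f,\alpha)$ is by hypothesis a finite extension of $K$, and every root of $f^n-\alpha$ lies in $L$, so $T\subseteq L$. Using the functional equation $\hat h_f(\beta)=2^{-n}\hat h_f(\alpha)$ for $\beta\in f^{-n}(\alpha)$, every element of $T$ has canonical height bounded by $\hat h_f(\alpha)$; since $\hat h_f$ and the Weil height differ by a bounded amount, $T$ has bounded Weil height in $L$. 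Northcott's theorem applied to the finite extension $L/K$ then forces $T$ to be finite, a contradiction.

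Part (b) follows immediately from (a). If $f$ is post-critically infinite, then $a$ is not periodic, so $a\notin\{f^n(a)\}_{n\geq 1}$, and the contrapositive of (a) gives $G_\infty(f,a)$ infinite. For any $\alpha\in\overline{K}$: either $\alpha$ is not in the post-critical orbit, in which case (a) applies directly; or $\alpha=f^m(a)$ for some $m\geq 1$, in which case $a\in f^{-m}(\alpha)$ and Remark \ref{subquotient} exhibits $G_\infty(f,a)$ as a subquotient of $G_\infty(f,\alpha)$, forcing the latter to be infinite.

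For (c), the $(\Leftarrow)$ direction is immediate: if $\alpha$ is exceptional, its finite backward orbit generates a finite extension of $K$ that contains every $K_n(f,\alpha)$, so $G_\infty(f,\alpha)$ is finite. For $(\Rightarrow)$, the Northcott argument from (a) applies verbatim with no separability assumption needed: $T\subseteq L$ again has canonical heights bounded by $\hat h_f(\alpha)$, so $T$ is finite and $\alpha$ is exceptional by definition.

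The main subtlety is the Northcott step when $K$ is a global function field, since the standard versions of Northcott for canonical heights require $f$ to be non-isotrivial. I sidestep this by working inside the fixed finite extension $L$ rather than all of $\overline{K}$: passing from $\hat h_f$ to the Weil height costs only a bounded error, and bounded-Weil-height subsets of a finite extension of a global function field are automatically finite because the constant subfield of $L$ is a finite field. This avoids any non-isotriviality hypothesis on $f$ and makes the argument go through uniformly for all global fields of characteristic different from $2$.
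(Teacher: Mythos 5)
Your proof is correct, and it takes a genuinely different route from the paper's. For (a), the paper argues by reduction modulo primes: after passing to a finite extension over which $G_\infty(f,\alpha)$ is trivial, it observes that $f^n-\alpha$ acquires a repeated root modulo almost every prime, applies \cite[Lemma 2.6]{jones_divisors} to place $\alpha$ in the post-critical orbit modulo almost every prime, and then invokes the local-global theorems \cite[Theorem 3.1]{benedetto1} (number fields) and \cite[Theorem 3]{towsley} (function fields) to promote this to genuine orbit membership. You replace all of this with a Call--Silverman canonical height argument, playing the infinitude of the backward orbit $T$ (which follows from separability, i.e.\ from $\alpha$ lying outside the post-critical orbit) against Northcott in the fixed global field $L=K_\infty(f,\alpha)$, where $\hat{h}_f(\beta)=2^{-n}\hat{h}_f(\alpha)$ bounds the Weil heights of $T$. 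Your handling of the function-field subtlety is the crucial step and it is right: non-isotriviality is only needed for statements of the type ``canonical height zero implies preperiodic'', which you never use, and since $T$ sits inside the fixed finite extension $L$, whose constant field is finite, bounded-height subsets of $L$ are automatically finite (finitely many pole divisors of bounded degree, and each Riemann--Roch space is a finite set over a finite field), so Northcott holds there unconditionally. Your (b) is also leaner than the paper's: instead of showing that the entire backward orbit lies in the post-critical orbit and extracting a combinatorial contradiction from two distinct preimages $\beta_1=f^{m_1}(a)$, $\beta_2=f^{m_2}(a)$, you apply the contrapositive of (a) directly to the critical point $a$ itself, which is not periodic since the orbit is infinite, and transport the conclusion via Remark \ref{subquotient} when $\alpha$ lies in the orbit; likewise your (c)($\Rightarrow$) is a direct Northcott application rather than a deduction from (a) and (b), and this causes no circularity since you never need ``exceptional implies in the post-critical orbit''. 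The trade-off: the paper's route avoids height machinery but leans on nontrivial cited local-global results in arithmetic dynamics, while yours is self-contained modulo the standard canonical-height formalism and makes transparent the mechanism that finiteness of the Galois group traps the backward orbit in a single global field, where bounded height forces finiteness.
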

\begin{proof}
$(a)$ Up to replacing $K$ with a finite extension, we can assume that $G_\infty(f,\alpha)$ is trivial. Then $f^n-\alpha$ splits into linear factors for every $n$. Thus for all but finitely many places $\mathfrak{p}$ of $K$,\footnote{We exclude the places at which some coefficient of $f$ or $\alpha$ have negative valuation plus the Archimedean places in the number field case} there exists some sufficiently large $n\in \mathbb{Z}_{\geq 1}$ such that the polynomial $f^n-\alpha$ has two coincident roots modulo $\mathfrak{p}$. Hence, by \cite[Lemma 2.6]{jones_divisors}, $\alpha$ is in the post-critical orbit of $f$ modulo $\mathfrak{p}$ for all but finitely many places $\mathfrak{p}$ of $K$. By \cite[Theorem 3.1]{benedetto1} when $K$ is a number field and by \cite[Theorem 3]{towsley} when $K$ is a function field, this implies that $\alpha$ is in the post-critical orbit of $f$.

$(b)$ Let $f=(x-a)^2-b$ be post-critically infinite, and suppose by contradiction that $G_\infty(f,\alpha)$ is finite. Then all of its subquotients are finite, and hence by $(a)$ and Remark \ref{subquotient} this means that the whole backward orbit $\bigcup_{n\geq 1}f^{-n}(\alpha)$ is contained in the post-critical orbit of $f$. Now choose an $n$ such that $f^{-n}(\alpha)$ contains at least two distinct elements $\beta_1,\beta_2$. Notice that this is always possible for $n=1$ or $2$: if it is not possible for $n=1$, it means that $\alpha=-b$. If $\alpha=-b$ and $f^{-2}(-b)$ has just one element, then $f=(x-a)^2+a$, which is post critically finite (notice that in fact $a$ is an exceptional point for such polynomial). Then there are $m_1,m_2$ such that $\beta_1=f^{m_1}(a)$ and $\beta_2=f^{m_2}(a)$. But this implies that $\alpha=f^{m_1+n}(a)=f^{m_2+n}(a)$, implying $m_1=m_2$ because $f$ is post-critically infinite, and yielding a contradiction because $\beta_1\neq \beta_2$ by construction.

$(c)$ One direction is obvious by definition. Conversely, assume that $G_\infty(f,\alpha)$ is finite. Again, it follows from point $(a)$ and Remark \ref{subquotient} that the set of roots $\bigcup_{n\geq 1}f^{-n}(\alpha)$ is entirely contained in the post-critical orbit of $f$. Furthermore thanks to point $(b)$ we know that $f$ must be PCF. Hence $\bigcup_{n\geq 1}f^{-n}(\alpha)$ is finite, which means exactly that $\alpha$ is exceptional.
\end{proof}

Notice that points $(b)$ and $(c)$ of Lemma \ref{finiteness} together imply that only PCF polynomials can admit exceptional points.

We are now ready to show that only PCF polynomials can have abelian dynamical Galois groups.

\begin{theorem}\label{abelian_image}
Let $K$ be a global field of characteristic not 2, let $f\in K[x]$ be a monic, quadratic polynomial and let $\alpha\in K$. Let $G_\infty(f,\alpha)$ be the dynamical Galois group of the system. If $G_\infty(f,\alpha)$ is abelian, then $f$ is PCF.
\end{theorem}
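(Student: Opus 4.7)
The approach is to combine Proposition \ref{commutativity} and Proposition \ref{powerful_corollary} with the PCF criterion of Theorem \ref{PCF_equivalence}, after two structural reductions on the pair $(f,\alpha)$.

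The first reduction removes the case where $\alpha$ lies in the post-critical orbit of $f$. Suppose $\alpha=f^{k}(a)$ with $k\geq 1$. From $f^{-n}(a)\subseteq f^{-(n+k)}(\alpha)$ one obtains $K_{\infty}(f,a)\subseteq K_{\infty}(f,\alpha)$, so $G_{\infty}(f,a)$ is a quotient of $G_{\infty}(f,\alpha)$, hence still abelian; moreover either $a$ is periodic (so $f$ is already PCF and we are done) or $a\notin\{f^n(a):n\geq 1\}$, in which case one may replace $(f,\alpha)$ by $(f,a)$. I thus assume henceforth that $\alpha$ avoids the post-critical orbit; then all iterates $f^{n}-\alpha$ are separable, $G\coloneqq G_{\infty}(f,\alpha)=\im(\rho_{f,\alpha})$, and by Lemma \ref{finiteness}(a) the group $G$ is infinite, in particular non-trivial, hence has a well-defined level $n$ as in Definition \ref{Level of a subgroup}.

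The second reduction normalises this level to $0$. Triviality of $G_n(f,\alpha)$ forces $f^n-\alpha$ to split completely in $K$, so every node of $V_n$ is $K$-rational. Pick a faithful $\beta_0\in V_n$ as in Remark \ref{first non-trivial}; since $G$ fixes $V_n$ pointwise, $\rho_{\beta_0}$ is the restriction morphism $G\twoheadrightarrow G_{\infty}(f,\beta_0)$, so $G_{\infty}(f,\beta_0)$ is again abelian and, by faithfulness, of level $0$. Furthermore $\beta_0$ is not in the post-critical orbit of $f$: otherwise $\alpha=f^n(\beta_0)$ would be. After renaming $\beta_0$ as $\alpha$, I may assume that $G$ is abelian of level $0$ and $\alpha$ is not in the post-critical orbit.

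At this point pick $\tau\in G$ with $\phi_1(\tau)=1$, and set $w\coloneqq\widehat{\phi}(\tau)\in\prod_{n\geq 1}\F_2$, which is non-zero because $w_1=1$. For any $\sigma\in G$ the commutator $[\sigma,\tau]$ is trivial, so Proposition \ref{commutativity} forces $\sigma\in\{1,\tau\}$ modulo commutators, and hence $\widehat{\phi}(G)\subseteq\{0,w\}$. Therefore, for $v\in\bigoplus_{n\geq 1}\F_2$, one has $G\subseteq M_v$ if and only if the finite sum $\sum_{n\geq 1}v_n w_n$ vanishes, a codimension-one condition on $v$. Combining this with Proposition \ref{powerful_corollary}, the $\F_2$-linear map $\bigoplus_{n\geq 1}\F_2\to K^{*}/K^{*2}$, $v\mapsto\prod_n c_{n,\alpha}^{v_n}$, has kernel of codimension one, whence its image $\langle c_{n,\alpha}\rangle_{n\geq 1}$ is one-dimensional. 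The implication $(4)\Rightarrow(1)$ of Theorem \ref{PCF_equivalence} then yields that $f$ is post-critically finite.

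The main obstacle is the second reduction, namely the careful identification of the restriction $\rho_{\beta_0}(G)$ with the dynamical Galois group $G_{\infty}(f,\beta_0)$, together with the verification that the new basepoint $\beta_0$ still satisfies the separability hypothesis of Proposition \ref{powerful_corollary}. Once this bookkeeping is in place, the rest of the argument is essentially a formal combination of Propositions \ref{commutativity} and \ref{powerful_corollary} with Theorem \ref{PCF_equivalence}.
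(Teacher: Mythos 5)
Your proof is correct, and its engine is the same as the paper's: at a faithful node, commutativity plus Proposition \ref{commutativity} forces the image under $\widehat{\phi}$ to be at most one-dimensional, whence by Proposition \ref{powerful_corollary} the span $\langle c_{n,\alpha}\rangle$ modulo squares is finite dimensional, and Theorem \ref{PCF_equivalence} (4)$\Rightarrow$(1) concludes. Where you genuinely diverge is in the reduction scheme. The paper argues by contradiction: assuming $f$ post-critically infinite, Lemma \ref{finiteness}(b) gives $G_\infty(f,\alpha)$ infinite, so some $f^m-\alpha$ has a root $\beta\notin K$; since the post-critical orbit lies in $K$, $\beta$ automatically avoids it, and the argument proceeds over the extended base field $K(\beta)$, using Remark \ref{subquotient} to keep the group abelian and Proposition \ref{abelian subgroups are of infinite index} at a faithful node. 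You instead argue directly and never leave $K$: when $\alpha=f^k(a)$ you pass to the critical point itself via $K_\infty(f,a)\subseteq K_\infty(f,\alpha)$ (with the periodic case of $a$ disposed of trivially, since then $f$ is already PCF), then you use only Lemma \ref{finiteness}(a) to get infinitude, and you exploit the observation that triviality of $G_n(f,\alpha)$ makes every level-$n$ node $K$-rational, so the faithful node $\beta_0$ is a legitimate $K$-basepoint. Your route buys a direct proof over the fixed ground field, at the cost of two basepoint changes whose bookkeeping (identifying $\rho_{\beta_0}$ with Galois restriction, checking $\beta_0$ avoids the post-critical orbit) you correctly carry out; the paper's route is shorter because Lemma \ref{finiteness}(b) and Remark \ref{subquotient} absorb these reductions, at the mild cost of invoking Proposition \ref{powerful_corollary} and Theorem \ref{PCF_equivalence} over the finite extension $K(\beta)$ --- harmless, as both are stated for arbitrary global fields. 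Both arguments rest on the same deep input (the implication (4)$\Rightarrow$(1), i.e.\ Faltings/Samuel together with Theorem \ref{isotriviality} in positive characteristic), so neither is more elementary; yours is marginally more self-contained in that it quantifies the conclusion as $\dim\langle c_{n,\alpha}\rangle = 1$ rather than merely finite.
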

\begin{proof}

By contradiction, assume that $f$ is post-critically infinite. Then by Lemma \ref{finiteness} the group $G_\infty(f,\alpha)$ is infinite. It follows that, in particular, $f^m-\alpha$ has a root $\beta\notin K$ for some $m$. Since the post-critical orbit of $f$ is entirely contained in $K$, we conclude that $f^n-\beta$ is separable for every $n\geq 1$, and therefore the group $G_\infty(f,\beta)$ is the image of the arboreal representation $\rho_{f,\beta}$ over $K(\beta)$. By Remark \ref{subquotient}, $G_\infty(f,\beta)$ is a subquotient of $G_\infty(f,\alpha)$, and hence it is abelian again. Moreover, by Lemma \ref{finiteness} again, $G_\infty(f,\beta)$ is not trivial. Let $\gamma$ be a faithful node for $G_\infty(f,\beta)$ acting on the tree of roots of the collection of polynomials $f^n-\beta$'s: by Proposition \ref{abelian subgroups are of infinite index} the group $G_\infty(f,\gamma)$ has 1-dimensional image in $\Omega_\infty^{\text{ab}}$ (notice that $\rho_{f,\gamma}(G_\infty(f,\beta))=G_\infty(f,\gamma))$.  It follows by Proposition \ref{powerful_corollary} that $\langle c_n-\gamma\rangle_{n\geq 1}$ is finite dimensional modulo squares, contradicting Theorem \ref{PCF_equivalence}.
\end{proof}

Our next goal is to use Theorem \ref{abelian_image} in order to prove \cite[Conjecture 1]{andrews} in the quadratic case over $\Q$. To do so, let us first recall the following local results.

\begin{definition}
Let $K$ be a local field and $L/K$ be a separable extension. We say that $L/K$ is \emph{infinitely ramified} if for every integer $N$ there exists a finite sub-extension $F/K$ in $L/K$ whose ramification index satisfies the lower bound $e_{F/K} \geq N$.
\end{definition}
 
\begin{theorem}[{{\cite[Lemma 7.1, Theorem 7.4]{anderson}}}]\label{poonen}
Let $k$ be a local field of odd residue characteristic with ring of integers $\O_k$ and maximal ideal $\m$. Let $f=x^2+c\in k[x]$ and $\alpha\in k$. Let $v(c)\geq 0$ and suppose that one of the following two conditions is satisfied:
\begin{enumerate}[(a)]
\item $v(\alpha)<0$;
\item $v(\alpha)\geq 0$, $0$ is periodic modulo $\m$, $\alpha$ is in the orbit of $0$ modulo $\m$ but $\alpha$ is not in the orbit of $0$;
\end{enumerate}
Then $k_\infty(f,\alpha)/k$ is infinitely ramified.
\end{theorem}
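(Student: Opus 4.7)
The proof splits into two cases by the sign of $v(\alpha)$; in both, the plan is to analyze a chain of iterated preimages $\beta_n,\beta_{n-1},\ldots,\beta_0=\alpha$ with $\beta_{i-1}=\beta_i^2+c$, and show that the minimal ramification index of $k(\beta_n)/k$ grows without bound. For case (a), set $m:=-v(\alpha)>0$. I would argue by induction on $i\geq 0$ that every root $\beta_i$ of $f^i(x)-\alpha$ satisfies $v(\beta_i)=-m/2^i$: assuming it at step $i-1$, the strict inequality $v(\beta_{i-1})=-m/2^{i-1}<0\leq v(c)$ forces $v(\beta_i^2)=v(\beta_{i-1}-c)=v(\beta_{i-1})$. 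Writing $m=2^s m'$ with $m'$ odd, the ramification index of $k(\beta_n)/k$ must then be divisible by $2^{n-s}$ for all $n>s$, so $k_\infty(f,\alpha)/k$ is infinitely ramified. This case is a clean Newton-polygon computation and presents no real difficulty.

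Case (b) demands a subtler bookkeeping because $v(\alpha)\geq 0$ no longer drives the ramification on the nose. Let $p$ be the period of $0$ modulo $\m$, set $c_i:=f^i(0)$, and fix $j\in\{0,\ldots,p-1\}$ with $\alpha\equiv c_j\pmod{\m}$; since $\alpha$ is not in the orbit of $0$, the integer $r:=v(\alpha-c_j)$ is finite. The plan is to construct, for every $k\geq 0$, a root $\beta_{j+kp}$ of $f^{j+kp}(x)-\alpha$ whose valuation has $2^{k+1}$ in its denominator, forcing the ramification of $k_\infty(f,\alpha)/k$ to be unbounded. The construction descends the preimage tree along the path whose residues trace out the cycle $c_j\leftarrow c_{j-1}\leftarrow\cdots\leftarrow c_0\leftarrow c_{p-1}\leftarrow c_{p-2}\leftarrow\cdots\leftarrow c_0\leftarrow\cdots$. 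In the first block of $p$ levels I would write $\beta_i=c_{j-i}+\epsilon_i$: the recursion $\epsilon_{i-1}=2c_{j-i}\epsilon_i+\epsilon_i^2$ combined with Hensel's lemma (valid because the residue characteristic is odd) lets one pick $\epsilon_i$ of the same positive valuation as $\epsilon_{i-1}$ so long as $c_{j-i}$ is a unit; by minimality of $p$, this is automatic for $c_1,\ldots,c_{p-1}$. Every $p$ levels the chain reaches the position $c_0=0$, which forces a relation $\beta^2=\epsilon$ that halves the valuation of the perturbation, and the whole procedure is then restarted with the new $\beta$ playing the role of $\alpha$. Iterating this $N$ times produces a preimage generating an extension of ramification index at least $2^{N-v_2(r)}$.

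The main obstacle, concentrated entirely in case (b), is to handle the fine interactions between the valuations of the successive $\beta_{j+kp}$ and those of the iterates $c_p,c_{2p},\ldots$: in principle these could conspire to slow the halving of the valuation, or to prevent the successive quadratic square roots taken along the chosen backward path from generating independent ramified extensions of $k$. Controlling these phenomena rigorously is a careful Newton-polygon analysis of the polynomials $f^n(x)-\alpha$ compared with $f^n(x)-c_{j+n}$ (whose root structure is transparent from the periodicity of $0$); the hypothesis of odd residue characteristic intervenes throughout, since it guarantees that units of the form $1+u$ with $v(u)>0$ are squares, so that no wild ramification obscures the count. Making all of this precise is the technical content of \cite[Lemma 7.1, Theorem 7.4]{anderson}.
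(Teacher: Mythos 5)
First, a framing point: the paper contains no proof of this statement at all --- it is imported verbatim from Anderson--Hamblen--Poonen--Walton, as the bracketed citation in the theorem header indicates --- so your proposal can only be measured against the argument of \cite{anderson} itself and against its own internal completeness. Your case (a) is correct and complete: the induction $v(\beta_i)=-m/2^i$ is valid because $v(\beta_{i-1})<0\leq v(c)$ at every step, and the divisibility of the ramification index of $k(\beta_n)/k$ by $2^{n-s}$ forces $k_\infty(f,\alpha)/k$ to be infinitely ramified; this is indeed the standard Newton-polygon computation.

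Case (b), however, has a genuine gap, sitting exactly where you locate ``the main obstacle''. Along your backward path, each return to the residue $0$ produces, via $c_{p-1}^2=c_p-c$, the valuation recursion $t_{k+1}=\tfrac{1}{2}v\bigl(\beta^{(k)}-c_p\bigr)$, where $t_k=v(\beta^{(k)})$ at the $k$-th visit. If $t_k$ is an integer equal to $v(c_p)$, cancellation in $\beta^{(k)}-c_p$ can push $v(\beta^{(k)}-c_p)$ far above $t_k$, and a priori the $t_k$ could remain integers at every pass, in which case your chosen path generates no ramification at all and the asserted bound $2^{N-v_2(r)}$ fails for it. You acknowledge this interaction but then dispose of it with ``making all of this precise is the technical content of \cite{anderson}'' --- which is circular, since that is precisely the statement to be proved. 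What is missing is short but essential: (i) the hypothesis that $\alpha$ is not in the orbit of $0$ must be invoked at \emph{every} pass, not only to make $r$ finite at the start: if $\beta^{(k)}=c_p$ then $\alpha=f^{n_k+p}(0)$, a contradiction, so $v\bigl(\beta^{(k)}-c_p\bigr)$ is finite each time; (ii) a sign-choice observation: the two available square roots satisfy $\bigl(\sqrt{\epsilon}-c_p\bigr)+\bigl(-\sqrt{\epsilon}-c_p\bigr)=-2c_p$, and $2$ is a unit, so when $t_k=v(c_p)$ the two roots cannot both lie closer to $c_p$ than $v(c_p)$; choosing the branch with $v\bigl(\pm\sqrt{\epsilon}-c_p\bigr)=v(c_p)$ gives $t_{k+1}=v(c_p)/2$; (iii) with that choice, after at most one further pass one has $t_k<v(c_p)$ permanently (if $t_k>v(c_p)$ then $t_{k+1}=v(c_p)/2$, and if $t_k<v(c_p)$ there is no possible tie since the minimum is attained uniquely), whence $t_{k+1}=t_k/2$ at every subsequent visit, the $2$-power denominators of the $t_k$ grow without bound, and so do the ramification indices of $k(\beta^{(k)})/k$. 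With (i)--(iii) inserted your sketch closes; without them it does not, because the ``conspiracy'' you worry about is a real failure mode of the unmodified recursion rather than a technicality.
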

The following result will play a key role for us. We give a quick proof using local class field theory. However we remark that it is also possible to prove it by more elementary methods, using the explicit description of tame extensions of local fields via radicals and the finiteness of the group of roots of unity in any non-archimedean local field. 
\begin{lemma}\label{class_field}
Let $k$ be a local field of odd residue characteristic $p$ and let and $k'/k$ be an infinitely ramified Galois $2$-extension. Then $k'/k$ is non-abelian.
\end{lemma}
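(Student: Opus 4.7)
The plan is to argue by contradiction: assume $k'/k$ is abelian and use local class field theory to show that the inertia subgroup of $\gal(k'/k)$ must be finite, which will contradict the infinite ramification hypothesis.

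The first step is to compute the maximal continuous pro-$2$-abelian quotient of $k^*$. Fix a uniformizer $\pi$, so that $k^*\cong \pi^{\Z}\times \mu_{q-1}\times U^{(1)}$, where $q$ denotes the size of the residue field of $k$ and $U^{(1)}=1+\m$ is the group of principal units. Since the residue characteristic $p$ is odd, $U^{(1)}$ is a pro-$p$ group on which, by Hensel's lemma, the squaring map is a bijection. Hence every element of $U^{(1)}$ is infinitely $2$-divisible, and must map to $0$ under any continuous homomorphism to a pro-$2$ group (such a target has no nonzero infinitely $2$-divisible elements). Combining this with the fact that the $2$-Sylow $\mu_{q-1}[2^{\infty}]$ of $\mu_{q-1}$ is finite cyclic, we see that the maximal continuous pro-$2$-abelian quotient of $k^*$ is $\Z_2\times \mu_{q-1}[2^{\infty}]$, and that inside it the image of the unit subgroup $\O_k^*$ is exactly the finite group $\mu_{q-1}[2^{\infty}]$.

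In the second step, I would invoke local class field theory. Because $\gal(k'/k)$ is a pro-$2$ group (it is the Galois group of a $2$-extension) and abelian by assumption, the reciprocity maps for the finite subextensions of $k'/k$ assemble in the inverse limit into a continuous homomorphism $k^*\to \gal(k'/k)$ that factors through the quotient computed above. Compatibly with the finite layers, this homomorphism sends $\O_k^*$ onto the inertia subgroup of $\gal(k'/k)$. By the previous step, this inertia subgroup has order at most $|\mu_{q-1}[2^{\infty}]|$, which is finite. Consequently, for every finite subextension $F/k$ of $k'/k$ the ramification index $e_{F/k}$ is bounded above by $|\mu_{q-1}[2^{\infty}]|$, contradicting the assumption that $k'/k$ is infinitely ramified.

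The proof is short and essentially routine. The one point requiring a moment of care is the passage to the limit in local class field theory, namely the compatibility of the Artin map with the inverse system of finite subextensions and the identification of inertia inside each layer; this is a standard fact about the local reciprocity map and presents no genuine obstacle.
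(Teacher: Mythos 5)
Your proof is correct and follows essentially the same route as the paper's: both invoke local class field theory to present the inertia subgroup of $\gal(k'/k)$ as a continuous image of $\O_k^*\cong(\O_k/\m)^*\times(1+\m)$, and both observe that since $1+\m$ is pro-$p$ with $p$ odd (your Hensel-lemma $2$-divisibility argument is just a hands-on version of the paper's remark that $1+\m$ is a $\Z_p$-module), the map to a pro-$2$ group kills $1+\m$, forcing the inertia image to be finite and contradicting infinite ramification. Your write-up is merely more explicit about the limit compatibility of the reciprocity maps, which the paper leaves implicit.
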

\begin{proof}
Let $\O_k$ be the ring of integers of $k$ and $\m$ its maximal ideal.  By local class field theory (see for example \cite[Chapter V]{neukirch2}) the group $G_k^{\text{ab}}$ is isomorphic, as a topological group, to $\widehat{\Z}\times \O_k^{*}$, with the inertia group $I_{k'/k}$ admitting a continuous surjection from $\O_k^{*}\cong  (\O_k/\m)^{*}\times (1+\m)$. Since $1+\m$ is a $\Z_p$-module and $I_{k'/k}$ is a pro-2-group, such a surjection must factor through $(\O_k/\m)^{*}$, and hence the image is finite.
\end{proof}
We are now in place to prove the main results of this section. 
\begin{theorem}\label{thm_abelian}
Let $f\in \Q[x]$ be a monic, quadratic polynomial and let $\alpha \in \Q$ be non-exceptional for $f$. Then $G_\infty(f,\alpha)$ is abelian if and only if $(f,\alpha)$ is $\Q$-conjugate to either $(x^2,\pm1)$ or $(x^2-2,\beta)$, where $\beta\in \{\pm 2,\pm 1,0\}$.
\end{theorem}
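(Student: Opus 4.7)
The plan has an easy ``if'' direction and a substantive ``only if'' direction.

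For the ``if'' direction I would verify each pair directly. For $(x^2,\pm 1)$ the splitting field of $x^{2^n}\mp 1$ sits inside $\Q(\zeta_{2^{n+1}})$, so $K_\infty(f,\alpha)\subseteq \Q(\zeta_{2^\infty})$ is abelian over $\Q$. For $(x^2-2,\beta)$ with $\beta\in\{\pm 2,\pm 1,0\}$, the Chebyshev substitution $x=\zeta+\zeta^{-1}$ identifies $x^2-2$ with $\zeta^2+\zeta^{-2}$, so pre-images of $\beta=\zeta_0+\zeta_0^{-1}$ are of the form $\zeta+\zeta^{-1}$ with $\zeta^2=\zeta_0$. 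The five listed values of $\beta$ correspond precisely to $\zeta_0$ being a root of unity of order dividing $6$, and hence the entire backward orbit of $\beta$ sits inside $\Q(\zeta_{2^\infty})$, yielding an abelian dynamical Galois group.

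For the ``only if'' direction, Theorem \ref{abelian_image} applied with $K=\Q$ first forces $f$ to be PCF. The classical classification of monic quadratic PCF polynomials over $\Q$ (only $c\in\{0,-1,-2\}$ in the normal form $x^2+c$) together with the invariance of $G_\infty$ under $\Q$-conjugation reduces us to $f\in\{x^2,\,x^2-1,\,x^2-2\}$, with forward critical orbits $\{0\}$, $\{0,-1\}$, $\{0,-2,2\}$ respectively. For each such $f$, I would couple Theorem \ref{poonen} (applicable in all three cases since each constant term is a $2$-adic integer) with Lemma \ref{class_field}. If some odd prime $p$ satisfies $v_p(\alpha)<0$, case (a) of Theorem \ref{poonen} gives infinite tame ramification at $p$, contradicting abelianity; hence $\alpha\in\Z$. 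If some odd $p$ divides $\alpha-\beta$ for a $\beta$ in the forward orbit of $0$ and yet $\alpha\neq\beta$, case (b) delivers the same contradiction. Together these constraints force $\alpha-\beta\in\{\pm 2^{k_\beta}:k_\beta\in\Z_{\geq 0}\}$ for \emph{every} $\beta$ in the forward orbit, leaving only finitely many integer candidates for $\alpha$ to inspect in each of the three cases.

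For each surviving candidate I would then either directly compute the bottom of the tower and exhibit a visible non-abelian (typically dihedral) sub-extension, or invoke the classical Kummer/Kronecker--Weber observation that $\sqrt[2^n]{m}$ lies in $\Q^{\mathrm{ab}}$ only when $\pm m$ is a $2^n$-th power in $\Q$, which for large $n$ forces $m=\pm 1$. The main obstacle I anticipate is the case $f=x^2$ with $\alpha=\pm 2^k$, $k\neq 0$, where all ramification in $K_\infty(f,\alpha)/\Q$ is wild at the prime $2$ and Theorem \ref{poonen} is silent; precisely this Kummer argument (noting that $\Q(\sqrt[2^n]{\pm 2^k})/\Q$ has non-abelian Galois closure for $n$ large) handles that case and completes the classification. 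After this cleanup only the five pairs listed in the theorem remain.
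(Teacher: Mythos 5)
Your skeleton for the ``only if'' direction starts correctly (Theorem \ref{abelian_image} forces $f$ to be PCF, the classification then gives $f\in\{x^2,x^2-1,x^2-2\}$ up to $\Q$-conjugacy, and your sieve for $x^2-1$ essentially reproduces the paper's argument), but there is a genuine gap in the Chebyshev case $f=x^2-2$: case (b) of Theorem \ref{poonen} requires that $0$ be \emph{periodic} modulo $\m$, and for $x^2-2$ the critical orbit is $0\mapsto -2\mapsto 2\mapsto 2$, so $f^n(0)\equiv 0 \pmod{\m}$ forces the residue characteristic to be $2$; at every odd place case (b) is vacuous. Hence for $x^2-2$ your constraints deliver only $\alpha\in\Z[1/2]$ (from case (a)), which is infinitely many candidates, not ``finitely many integer candidates,'' and your outline offers no tool for them. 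Nor can a local argument of this type rescue it: for a surviving candidate such as $\alpha=3$ the first preimages are $\pm\sqrt5$ and then $\pm\sqrt{2+\sqrt5}$ --- a radical over $\Q(\sqrt5)$, not over $\Q$, so your Kummer observation does not apply --- and the infinite ramification in the tower sits wildly above $2$, exactly where Theorem \ref{poonen} and Lemma \ref{class_field} (odd residue characteristic) are silent; the paper explicitly flags this limitation of the tame-local method. The paper avoids the problem entirely: it settles both $x^2$ and $x^2-2$, in \emph{both} directions at once, by quoting \cite[Theorems 12, 13]{andrews}, which rest on the Amoroso--Zannier height lower bound in abelian extensions \cite{amoroso} (abelianity forces $\alpha$ to be a root of unity for $x^2$, resp.\ $\alpha=\zeta+\zeta^{-1}$ for $x^2-2$, and rationality then yields exactly the listed values). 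Some such global input is indispensable at this point and is missing from your proposal.

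Several smaller inaccuracies: your Kummer criterion is false as stated, since $\sqrt[4]{-4}=1+i\in\Q^{\mathrm{ab}}$ although $\pm4$ is not a fourth power in $\Q$ (via $x^4+4=(x^2-2x+2)(x^2+2x+2)$); the corrected statement still eliminates $\alpha=\pm2^k$, $k\neq 0$, for $f=x^2$ at large $n$, so that branch is repairable. In the ``if'' direction, for $\beta=\pm1$ the backward orbit of $(x^2-2,\beta)$ contains elements such as $\sqrt3=\zeta_{12}+\zeta_{12}^{-1}$, which do \emph{not} lie in $\Q(\zeta_{2^\infty})$ (one needs $\Q(\zeta_{3\cdot 2^\infty})$), and $\beta=0$ corresponds to $\zeta_0$ of order $4$, not dividing $6$; the abelian conclusion survives since everything remains cyclotomic, but the claim should be stated correctly. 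Finally, be aware that among the survivors of your $x^2-1$ sieve, the value $\alpha=-1/2$ is genuinely delicate: there $(c_{1,\alpha},c_{2,\alpha})=(1/2,1/2)$ spans only a one-dimensional space modulo squares, so no dihedral group is visible at level $2$, and the paper has to use the faithful-node argument of Proposition \ref{abelian subgroups are of infinite index} together with $c_{3,\alpha}=-1/2$ (and it handles the in-orbit values $\alpha\in\{0,-1\}$ separately by passing to the basepoint $\sqrt2$ and exhibiting $D_8$ over $\Q(\sqrt2)$); your ``inspect the bottom of the tower'' step must therefore go to level $3$, not level $2$, for these cases.
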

\begin{proof}
First, suppose that $f$ is conjugated to $x^2$ or $x^2-2$. These are somewhat special cases, as they are the powering map and the Chebyshev map, respectively, of degree 2. Then, as shown in \cite[Theorems 12,13]{andrews}, it follows elementarily from a theorem of Amoroso and Zannier \cite{amoroso} that $G_\infty(f,\alpha)$ is abelian if and only if $f=x^2$ and $\alpha$ is a root of unity or $f=x^2-2$ and $\alpha=\zeta+1/\zeta$, where $\zeta$ is a root of unity. Since $\alpha\in \Q$, in the first case one must have $\alpha=\pm 1$, while in the second case $\zeta$ must satisfy a quadratic equation over the rationals, and hence $\zeta\in \{\pm 1,\pm i,\pm 1/2\pm i\sqrt{3}/2\}$, yielding the conclusion.

Now assume that $G_\infty(f,\alpha)$ is abelian. Observe that up to conjugation over $\Q$, we can assume that the pair $(f,\alpha)$ is of the form $(x^2+c,\alpha)$ for some $c,\alpha\in \Q$. By Theorem \ref{abelian_image}, it follows that $f$ is post-critically finite. It is easy to show (see for example \cite{boston}) that there are exactly three post-critically finite polynomials in $\Q[x]$ of the aforementioned form: they are $x^2$, $x^2-1$ and $x^2-2$. If $f\in \{x^2,x^2-2\}$, the claim follows as we explained above.

Hence we are left with the case $f=x^2-1$, and we need to show that $G_\infty(f,\alpha)$ is non-abelian for every $\alpha\in \Q$. First, let us assume that $\alpha\notin \{0,-1\}$. If there exists an odd prime $p$ such that $v_p(\alpha)\neq 0$, then we conclude immediately by Theorem \ref{poonen} and Lemma \ref{class_field}, since if $\im(\rho_{f,\alpha})$ is abelian then so is $\im({\rho_{f,\alpha}}_{|G_{\Q_p}})$.\footnote{Here we are implicitly fixing an embedding $\overline{\Q}\hookrightarrow \overline{\Q}_p$.} This immediately implies that $\alpha$ must be of the form $\pm 2^j$ for some $j\in \Z$. Suppose first that $\alpha\notin\{ 1,-2,-1/2\}$. Notice that $\Q(\sqrt{\alpha+1})_\infty(f,\sqrt{\alpha+1})\subseteq \Q_\infty(f,\alpha)$, and therefore it is enough to show that $\Q(\sqrt{\alpha+1})_\infty(f,\sqrt{\alpha+1})$ is a non-abelian extension of $\Q(\sqrt{\alpha+1})$. But this follows by the same argument we just used: for these values of $\alpha$ there always exists an odd prime $p$ such that $v_p(\alpha+1)\neq 0$, and therefore Theorem \ref{poonen} and Lemma \ref{class_field} allow to conclude.

Next, we have to exclude the cases $\alpha\in \{1,-2,-1/2\}$. As usual, we denote by $\{c_n\}_{n\geq 1}$ the adjusted post-critical orbit of $f$ and by $c_{n,\alpha}$ the sequence defined by $c_{1,\alpha}\coloneqq c_1+\alpha$ and $c_{n,\alpha}\coloneqq c_n-\alpha$ for $n\geq 2$. It is a well-known fact (see for example \cite{stoll}) that if $\dim\langle c_{1,\alpha},c_{2,\alpha}\rangle=2$, then $\gal(f^2-\alpha)\cong D_8$. This immediately rules out $\alpha=1,-2$. If $\alpha=-1/2$ then $(c_{1,\alpha},c_{2,\alpha},c_{3,\alpha})=(1/2,1/2,-1/2)$. But then $\phi_1(G_\infty(f,\alpha))\neq \{0\}$, since $1/2$ is not a  rational square, and hence $-1/2$ is a faithful node for $G_\infty(f,-1/2)$. Furthermore by Proposition \ref{powerful_corollary} the image of $G_\infty(f,-1/2)$ via $\widehat{\phi}\colon \Omega_\infty\to \prod_{n\geq 1}\F_2$ is at least 2-dimensional, since $1/2$ and $-1/2$ are independent modulo rational squares. Hence by Proposition \ref{abelian subgroups are of infinite index} we conclude that the group $G_\infty(f,-1/2)$ cannot be abelian.

To conclude the proof, we have to rule out the case $\alpha\in \{0,-1\}$. Since the equation $f=-1$ has only $0$ as a solution, it follows immediately that $G_\infty(f,-1)=G_\infty(f,0)$. Thus it is enough to deal with $\alpha=0$. The roots of $f^2$ are $0$ and $\pm \sqrt{2}$ and therefore by Remark \ref{subquotient} it is enough to show that $\im (\rho_{f,\sqrt{2}})$ is not abelian. One can argue again as above: $c_{1,\sqrt{2}}=1+\sqrt{2}$ and $c_{2,\sqrt{2}}=-\sqrt{2}$. It is immediate to verify that $1+\sqrt{2}$ and $-\sqrt{2}$ are linearly independent modulo squares in $\Q(\sqrt{2})^{*}$, and therefore the Galois group of $f^2-\sqrt{2}$ over $\Q(\sqrt{2})$ is isomorphic to $D_8$.
\end{proof}
Let us remark that one can remove the non-exceptionality hypothesis on $\alpha$, at the cost of adding $(x^2,0)$ to the list. However, pairs $(f,\alpha)$ with $\alpha$ exceptional are not so interesting from the Galois-dynamical point of view: by Lemma \ref{finiteness}, up to passing to a finite extension of the base field their dynamical Galois group is trivial.
\begin{theorem}\label{function_fields}
Let $K$ be a global function field of odd characteristic $p$, $f\in K[x]$ a monic, quadratic polynomial and $\alpha\in K$. Then $G_\infty(f,\alpha)$ is abelian if and only if $(f,\alpha)$ is $K$-conjugate to a pair $(g,\beta)$ with $g\in \overline{\F}_p[x]$ and $\beta\in \overline{\F}_p$.
\end{theorem}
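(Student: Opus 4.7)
My approach is to follow the same strategy as in the proof of Theorem \ref{thm_abelian} but in the function field setting, using places of $K$ in place of rational primes. The reverse implication is straightforward: if $g \in \overline{\F}_p[x]$ and $\beta \in \overline{\F}_p$, then every iterated preimage of $\beta$ under $g$ lies in $\overline{\F}_p$, so $K_\infty(g,\beta) = K \cdot L$ for some algebraic extension $L \subseteq \overline{\F}_p$ of $\F_q := K \cap \overline{\F}_p$. Hence $G_\infty(g,\beta) \cong \gal(L/\F_q)$ is a continuous quotient of $\gal(\overline{\F}_p/\F_q) \cong \widehat{\Z}$, and in particular abelian.

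For the forward implication, assume $G_\infty(f,\alpha)$ is abelian. Theorem \ref{abelian_image} then yields that $f$ is PCF. Writing $f = (x-a)^2 - b$, I conjugate by $(u,v) = (1,a)$ over $K$ to reduce to a pair $(g,\beta) = (x^2 + c, \alpha - a)$ with $c,\beta \in K$, still PCF and still with abelian dynamical Galois group; it remains to prove that $c,\beta \in \overline{\F}_p$. The argument for $c$ is purely geometric: if $c \notin \overline{\F}_p$, then $c$ has a pole at some place $\mathfrak{p}$ of $K$, and the ultrametric inequality gives by a trivial induction $v_\mathfrak{p}(g^n(0)) = 2^{n-1}\,v_\mathfrak{p}(c) \to -\infty$, contradicting finiteness of the post-critical orbit.

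Once $c \in \overline{\F}_p$ is established, the post-critical orbit of $g$ lies in $\overline{\F}_p$, hence has non-negative valuation at every place. If $\beta \notin \overline{\F}_p$, then $\beta$ has a pole at some place $\mathfrak{p}$ of $K$, so $v_\mathfrak{p}(c) \geq 0$ and $v_\mathfrak{p}(\beta) < 0$; moreover $\beta$ is automatically not in the post-critical orbit of $g$. I then pass to the completion $k = K_\mathfrak{p}$, which has odd residue characteristic $p$, invoke Theorem \ref{poonen}(a) applied to $(g,\beta)$ over $k$ to deduce that $k_\infty(g,\beta)/k$ is infinitely ramified, and conclude via Lemma \ref{class_field} that this pro-$2$-extension cannot be abelian. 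On the other hand, fixing an embedding $\overline{K} \hookrightarrow \overline{k}$, restriction embeds $\gal(k_\infty(g,\beta)/k)$ into $G_\infty(g,\beta)$, which by assumption is abelian—a contradiction.

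The main obstacle is precisely this second step: one has to transport the global abelianness hypothesis to a local completion at a pole of $\beta$ and then combine the Anderson et al.\ infinite-ramification result with the tame local class field theory estimate of Lemma \ref{class_field}. The constancy of $c$, by contrast, comes essentially for free from the PCF condition.
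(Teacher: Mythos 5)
Your proof is correct and follows essentially the same route as the paper's: conjugate over $K$ to a pair $(x^2+c,\beta)$, deduce PCF from Theorem \ref{abelian_image}, force $c\in\overline{\F}_p$ by the valuation blow-up $v_\p(g^n(0))=2^{n-1}v_\p(c)$ at a pole of $c$, and rule out non-constant $\beta$ by completing at a pole of $\beta$ and combining Theorem \ref{poonen} with Lemma \ref{class_field}. The only differences are that you make explicit some details the paper leaves implicit (the embedding of the local Galois group into $G_\infty(g,\beta)$ and the observation that $\beta$ cannot lie in the post-critical orbit), which is harmless.
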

\begin{proof}
The ``if'' part is obvious, since the group $\gal(\overline{\F}_p/\F_p)$ is abelian.

Conversely, assume that $G_\infty(f,\alpha)$ is abelian. Up to conjugation, we can assume that $f=x^2+c$ for some $c\in K$. By Theorem \ref{abelian_image}, $f$ must be post-critically finite. We claim that this forces $c$ to be in $\overline{\F}_p$. To see this either observe that if there is a valuation $v$ of $K$ with $v(c)<0$, then we clearly have that the valuations of the orbit of $0$ go to $-\infty$, hence in particular the orbit is infinite. Alternatively the conclusion follows from the fact that if $x^2+c$ is PCF then $c$ satisfies a non-trivial polynomial equation with coefficients in $\F_p$.  The same conclusion on $f$ could have been reached using \cite[Corollary 6.3]{benedetto2}.

 Now if $\alpha$ is non-constant, then there is necessarily a place $\p$ of $K$ such that $v_\p(\alpha)<0$. Let then $K_\p$ be the completion of $K$ at $\p$: by Theorem \ref{poonen} the extension $(K_\p)_\infty(f,\alpha)/K_\p$ is infinitely ramified, and Lemma \ref{class_field} shows that it is non-abelian. It follows that $\alpha$ must be constant.
\end{proof}

\section{Non-realizable subgroups of \texorpdfstring{$\Omega_\infty$}{}}\label{sec:non_realizable}

The goal of this section is to show that if $I\subseteq \bigoplus_{i\geq 1}\F_2$ is an infinite set of vectors satisfying certain conditions, then the group $M_I\coloneqq\bigcap_{v\in I} M_v$ (cf.\ Section \ref{sec:finite_generation} for the notation) is never the image of the arboreal representation attached to a quadratic polynomial over a number field. As we will show in Lemma \ref{finite_generation}, if $I\subseteq \bigoplus_{i\geq 1}\F_2$ is \emph{any} infinite subset, then $M_I$ has infinite index in $\Omega_\infty$ and it is \emph{not} topologically finitely generated. Hence, it follows from Theorem \ref{PCF_equivalence} and Jones' Conjecture \ref{Jones conjecture} that if $K$ is a number field and $f\in K[x]$ is quadratic, then $\im(\rho_{f,0})=M_I$ implies that $0$ is periodic for $f$. Notice that under these assumptions, within all subgroups of the form $M_I$ there are certainly infinitely many that cannot be realized as arboreal images by any quadratic polynomial: if $0$ is periodic then $f$ must be reducible, and therefore by Proposition \ref{powerful_corollary} it must be $\im(\rho_f)\subseteq M_{(1,0,\ldots,0,\ldots)}$; consequently if $(1,0,\ldots,0,\ldots)$ does not belong to the sub-vector space spanned by $I$ then $M_I$ is not realizable as an arboreal image. Although $\Omega_\infty$ contains of course many more subgroups than the ones of the form $M_I$, ruling out the ones in the latter form from the possible images of arboreal representations is already a non-trivial problem, as it yields unconditional evidence towards Conjecture \ref{Jones conjecture}. In this section we rule out subgroups of the form $M_I$ for two distinct infinite families of $I$'s that may as well contain $(1,0,\ldots,0,\ldots)$. The strategy of the proof, in both cases, is to associate to a polynomial whose arboreal representation has image $M_I$ a hyperelliptic curve having infinitely many rational points, violating this way Faltings' theorem when the base field is a number field. While for the first family this is done geometrically by looking at a ``chain'' of hyperellitic curves and non-constant maps, for the second family the proof is much more delicate: the hyperelliptic curves involved are, in general, unrelated one to each other and the construction is of an entirely arithmetic nature.

We begin with a simple fact, which is a profinite variation of Schreier's Lemma. 
\begin{proposition} \label{Profinite Schreier's lemma}
Let $G$ be a profinite group and $H$ be an open subgroup. Suppose that $G$ is topologically finitely generated. Then also $H$ is topologically finitely generated. 
\begin{proof}
Let $S$ be a finite set of topological generators of $G$. Denote by $G/H$ the set of right cosets for $H$ in $G$. Fix $f:G/H \to G$ a set-theoretic section of the projection $G \to G/H$. Denote by $R$ the image of $f$. Since $H$ is open in $G$, then $R$ is a finite set. Hence the subset of $H$ given by
$$\widetilde{S}:=\{rsf(rs)^{-1}, r \in R, s \in S\}
$$
is finite. To conclude the proof, it is enough to show that $\widetilde{S}$ is a set of topological generators of $H$. Notice that this is equivalent to proving that for any open normal subgroup $N$ of $H$ the image of $\widetilde{S}$ modulo $N$ generates $H/N$, because since $H$ is profinite as well, such subgroups $N$ constitute a system of fundamental neighbors of the identity \cite[Proposition 1.1.3]{neukirch}. Observe that any open normal subgroup $N$ of $H$ is an open subgroup of $G$ as well and hence has finite index therein. Therefore it admits finitely many $G$-conjugates, whose intersection is a subgroup $N' \leq N$ that is an open normal subgroup of $G$ (and of $H$ as well, of course). If $\widetilde{S}$ generates modulo $N'$ then it certainly does modulo $N$, so we are reduced to the case of $N'$. This last case follows from the ordinary Schreier's Lemma (see \cite{hall}) applied to the group $G/N'$, subgroup $H/N'$ and generating set $S$: in this case the lemma gives precisely that $[\widetilde{S}]_{\text{mod } N'}$ is a generating set for $H/N'$.
\end{proof}
\end{proposition}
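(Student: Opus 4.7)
The plan is to transfer Schreier's classical finite-index generation lemma to the profinite setting by a compactness/approximation argument. First I fix a finite topological generating set $S$ of $G$. Since $H$ is open in the profinite group $G$, it has finite index, so I can choose a finite set $R$ of coset representatives for $H$ in $G$, realized as the image of a set-theoretic section $f\colon G/H \to G$ of the canonical projection. The natural candidate for a finite topological generating set of $H$ is the Schreier set
$$\widetilde{S} \coloneqq \{\, r s f(rs)^{-1} : r \in R,\, s \in S\,\} \subseteq H,$$
which is finite because $R$ and $S$ are.

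Next I would verify that $\widetilde{S}$ topologically generates $H$. Using that a subset of a profinite group topologically generates it iff its image generates every finite continuous quotient, and that the open normal subgroups of $H$ form a fundamental system of neighborhoods of the identity (this is where I cite \cite[Proposition 1.1.3]{neukirch} or a standard profinite reference), it suffices to show that $\widetilde{S}$ surjects onto $H/N$ for every open normal subgroup $N \trianglelefteq H$.

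The main technical obstacle is that an open normal subgroup $N \trianglelefteq H$ need not be normal in $G$, which blocks a direct application of the classical lemma to the quotient $G/N$. To bypass this, I would observe that since $H$ is open in $G$, so is $N$; hence $N$ has finite index in $G$ and therefore only finitely many $G$-conjugates, whose intersection $N'$ is an open normal subgroup of $G$ contained in $N$. Surjection of $\widetilde{S}$ onto $H/N'$ implies surjection onto $H/N$, so I reduce to the case where $N$ is normal in $G$.

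Finally, for $N' \trianglelefteq G$ open, the quotient $G/N'$ is a finite group topologically generated (hence generated in the usual sense) by the image of $S$, with subgroup $H/N'$ of finite index and with coset representatives given by the image of $R$. The classical Schreier Lemma (see \cite{hall}) applied in this finite setting yields that the image of $\widetilde{S}$ generates $H/N'$, completing the proof. The argument is otherwise formal; the only substantive move is the passage $N \rightsquigarrow N'$, which exploits the openness of $H$ to produce a $G$-normal refinement.
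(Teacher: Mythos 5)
Your proposal is correct and follows essentially the same route as the paper's own proof: the same Schreier set $\widetilde{S}=\{rsf(rs)^{-1}\}$, the same reduction to surjectivity modulo open normal subgroups $N\trianglelefteq H$, the same replacement of $N$ by the intersection $N'$ of its finitely many $G$-conjugates, and the same final appeal to the classical Schreier Lemma in the finite quotient $G/N'$. There is nothing to add or correct.
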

\begin{lemma}\label{finite_generation}
The following hold.
\begin{enumerate}
\item The commutator subgroup $[\Omega_\infty,\Omega_\infty]\leq \Omega_\infty$ is not topologically finitely generated.
\item Let $I\subseteq \bigoplus_{i\in \Z_{\geq 1}}\F_2$ be non-empty, and let $M_I\coloneqq \bigcap_{v\in I}M_v$. Then $M_I$ is not topologically finitely generated.
\end{enumerate}
\end{lemma}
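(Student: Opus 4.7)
The plan is to deduce both parts from the fact that a pro-$2$ group $G$ is topologically finitely generated if and only if its Frattini quotient $G/\Phi(G)$ is finite. Since $\Omega_\infty^{\text{ab}} \cong \prod_{n \geq 1} \F_2$ already has exponent $2$, it coincides with $\Omega_\infty/\Phi(\Omega_\infty)$ and is infinite, so $\Omega_\infty$ itself is not topologically finitely generated. The two statements will then follow by exhibiting $\Omega_\infty$, or a non-finitely-generated elementary abelian quotient of $\Omega_\infty$, as a continuous quotient of the target group.

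For part (1), the key step is to realize $\Omega_\infty$ as a continuous quotient of $[\Omega_\infty, \Omega_\infty]$. Using the self-similar decomposition $\Omega_\infty \cong (\Omega_\infty \times \Omega_\infty) \rtimes \F_2$ where $\F_2$ acts by swap, the subgroup $\ker \phi_1$ is precisely $\Omega_\infty \times \Omega_\infty$, and hence $[\Omega_\infty, \Omega_\infty] \subseteq \Omega_\infty \times \Omega_\infty$. The first-coordinate projection $\pi_1 \colon \Omega_\infty \times \Omega_\infty \to \Omega_\infty$ is continuous, and restricts to a surjection on $[\Omega_\infty, \Omega_\infty]$: for any $a \in \Omega_\infty$ the diagonal element $(a, a)$ belongs to $\ker \widehat{\phi} = [\Omega_\infty, \Omega_\infty]$, since $\phi_n((a,a)) = \phi_{n-1}(a) + \phi_{n-1}(a) = 0$ for every $n \geq 2$ (and $\phi_1((a,a))=0$ by construction). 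Thus if $[\Omega_\infty, \Omega_\infty]$ were topologically finitely generated, then so would be $\Omega_\infty$, contradicting the above.

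For part (2), let $W$ denote the $\F_2$-span of $I$ inside $\bigoplus_{n \geq 1}\F_2$ and let $V_I \subseteq \prod_{n \geq 1} \F_2$ be its annihilator under the natural pairing. A direct check shows that $\widehat{\phi}(M_I) = V_I$, so $[\Omega_\infty, \Omega_\infty] = \ker \widehat{\phi}$ is contained in $M_I$ with quotient $V_I$. I split according to the size of $V_I$. If $V_I$ is infinite, then as a closed subgroup of the elementary abelian pro-$2$ group $\prod_{n \geq 1}\F_2$ it cannot be topologically finitely generated, because the subgroup algebraically generated by any $k$ elements has at most $2^k$ elements, is therefore finite and already closed, and so already equals its closure. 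Being a continuous quotient of $M_I$, this forces $M_I$ not to be topologically finitely generated. If instead $V_I$ is finite, then $[\Omega_\infty, \Omega_\infty]$ is an open subgroup of $M_I$ of index $|V_I|$, and Proposition \ref{Profinite Schreier's lemma} combined with part (1) gives the conclusion.

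The only non-routine ingredient in the whole argument is the diagonal trick $(a,a) \in [\Omega_\infty, \Omega_\infty]$ furnishing the continuous surjection $[\Omega_\infty, \Omega_\infty] \twoheadrightarrow \Omega_\infty$ used in part (1); the remainder is a combination of Schreier's lemma and the elementary observation that closed subgroups of $\prod_n \F_2$ are topologically finitely generated only if finite.
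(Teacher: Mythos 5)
Your proof is correct, but it takes a genuinely different route from the paper's, most visibly in part (1). The paper proves (1) by invoking from \cite{ferra5} the infinite, linearly independent family of continuous characters $\widetilde{\phi}_n\colon [\Omega_\infty,\Omega_\infty]\to\F_2$ (sum of the first half of the coordinates of $\sigma_n$), which makes $\hom_{\text{cont}}([\Omega_\infty,\Omega_\infty],\F_2)$ infinite; you instead build a continuous surjection $[\Omega_\infty,\Omega_\infty]\to\Omega_\infty$ from the diagonal trick $(a,a)\in\ker\widehat{\phi}=[\Omega_\infty,\Omega_\infty]$ together with the first-coordinate projection on $\ker\phi_1\cong\Omega_\infty\times\Omega_\infty$, reducing (1) to the easy fact that $\Omega_\infty$ itself is not topologically finitely generated (its Frattini quotient is $\prod_{n\geq 1}\F_2$). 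Your argument is self-contained — it avoids the external characters entirely, and in fact recovers the paper's intermediate claim, since pulling back the $\phi_n$ along your surjection produces infinitely many independent characters of the commutator subgroup — and it exhibits the stronger structural fact that the closed commutator subgroup maps continuously onto the whole group. For part (2), the paper argues uniformly by Pontryagin duality: if $M_I$ were topologically finitely generated, the $2$-torsion of $(M_I^{\text{ab}})^{\vee}$ would be finite-dimensional, so the restrictions $\phi_{i|M_I}$ would span finitely, making $H=\bigcap_{i\geq 1}\ker\phi_{i|M_I}=[\Omega_\infty,\Omega_\infty]$ open in $M_I$, whence Proposition \ref{Profinite Schreier's lemma} and part (1) conclude with no case distinction. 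You instead compute the abelian quotient explicitly, $\widehat{\phi}(M_I)=V_I$ (the annihilator of the span of $I$, which is correct since $M_I=\widehat{\phi}^{-1}(V_I)$ and $\widehat{\phi}$ is surjective), and split: when $V_I$ is infinite you conclude directly from the elementary observation that an infinite closed subgroup of $\prod_{n\geq 1}\F_2$ cannot be topologically finitely generated, bypassing both Schreier's lemma and part (1); when $V_I$ is finite your argument coincides in substance with the paper's (here you use silently, but correctly, that a closed finite-index subgroup of a profinite group is open). Both cases genuinely occur — a finite $I$ gives $V_I$ infinite, while $I=\bigoplus_{i\in\Z_{\geq 1}}\F_2$ gives $V_I=0$ — so the split is needed in your organization; what it buys is concreteness, namely the exact identification of $M_I/[\Omega_\infty,\Omega_\infty]$, at the cost of the paper's uniformity.
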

\begin{proof}
(1) Suppose by contradiction that $[\Omega_\infty,\Omega_\infty]$ is topologically finitely generated. Then the group $\hom_{\text{cont}}([\Omega_\infty,\Omega_\infty],\F_2)$ is finite. However, as proved in \cite{ferra5}, for every $n\geq 1$ there exists a character $\widetilde{\phi}_n(x)\colon [\Omega_\infty,\Omega_\infty]\to \F_2$ whose value on $\sigma=(\sigma_n)_{n\geq 1}\in [\Omega_\infty,\Omega_\infty]\subseteq \Omega_\infty$ is given by the sum of the first half of the coordinates of $\sigma_n$. The family $\{\widetilde{\phi}_n(x)\}_{n\geq 1}$ is linearly independent and infinite, and we have a contradiction.

(2) Assume by contradiction that $M_I$ is topologically finitely generated. Then $M_I^{\text{ab}}$ is a finitely generated $\Z_2$-module. Dualizing, it follows that $(M_I^{\text{ab}})^{\vee}$ embeds into $(\Q_2/\Z_2)^m$ for some $m\geq 1$. Hence, its 2-torsion is at most $m$-dimensional as a vector space over $\F_2$. Now consider the restriction map $r\colon (\Omega_\infty^{\text{ab}})^{\vee}\to (M_I^{\text{ab}})^{\vee}$. The image of $r$ lies inside $(M_I^{\text{ab}})^{\vee}[2]$, and hence the subspace of $(M_I^{\text{ab}})^{\vee}$ spanned by $\langle \phi_{i|M_I}\rangle_{i\geq 1}$ is finite dimensional. It follows that $H\coloneqq\bigcap_{i\geq 1}\ker \phi_{i|M_I}$ is open in $M_I$. By Proposition \ref{Profinite Schreier's lemma}, this shows that $H$ is topologically finitely generated. On the other hand, it is immediate to see that $H=[\Omega_\infty,\Omega_\infty]$, and we get a contradiction from (1).
\end{proof}
We now give a definition that has the purpose of pinpointing a first infinite class of subsets $J\subseteq \bigoplus_{i\in\Z_{\geq 1}}\F_2$ , for which we will establish that $M_J$ does not arise as an arboreal Galois image over number fields. 
\begin{definition} \label{progressing definition}
Let $J\subseteq \bigoplus_{i\in\Z_{\geq 1}}\F_2$. Let $k,\ell$ be in $\Z_{\geq 1}$. We say that $J$ is $(k,\ell)$-\emph{progressing} if the sub-vector space spanned by $J$ contains an infinite set $I$ with the following property: for every $v \in I$, the indexes of the non-zero entries of $v$ form an arithmetic progression of common difference $k$ and length $\ell$.
\end{definition}
\begin{theorem} \label{evidence1}
Let $J\subseteq \bigoplus_{i\in \Z_{\geq 1}}\F_2$ be a $(k,\ell)$-progressing subset for some positive integers $k,\ell$. Let $K$ be a number field, $f\in K[x]$ a quadratic polynomial and $\alpha\in K$ an element  of $K$ not belonging to the post-critical orbit of $f$. Then we have the following. 

\begin{enumerate}[(a)]
\item If $\im(\rho_{f,\alpha}) \subseteq M_J$ then $f$ is post-critically finite and $\im(\rho_{f,\alpha})$ is finitely generated. 
\item $\im(\rho_{f,\alpha})$ cannot coincide with $M_J$. 
\end{enumerate}

\end{theorem}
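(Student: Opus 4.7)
The plan is to prove (a) by contradiction: assuming $\im(\rho_{f,\alpha})\subseteq M_J$ while $f$ is post-critically infinite, we will exhibit infinitely many $K$-rational points on a hyperelliptic curve of geometric genus at least $2$, contradicting Faltings's theorem. Once (a) is in place, the finite generation of $\im(\rho_{f,\alpha})$ follows from Theorem~\ref{PCF_equivalence}, and (b) is immediate: (a) forces $\im(\rho_{f,\alpha})$ to be topologically finitely generated, whereas Lemma~\ref{finite_generation}(2) tells us $M_J$ is never such.

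To set up the argument for (a), combine the $(k,\ell)$-progressing property with Proposition~\ref{powerful_corollary}: we obtain infinitely many integers $a$ with $\prod_{j=0}^{\ell-1}c_{a+jk,\alpha}\in K^{*2}$. After pigeonholing on $a\bmod k$ and discarding the finitely many small values (so that the formula $c_{n,\alpha}=g^n(a-\alpha)$ below applies without the sign adjustment needed at $n=1$), we may assume these $a$'s lie in an infinite set $A\subseteq\Z_{\geq 2}$ contained in a single residue class modulo $k$. Set $g(x):=f(x+\alpha)-\alpha$, so that $c_{n,\alpha}=g^n(a-\alpha)$ for $n\geq 2$, and for a large integer $N$ introduce the hyperelliptic curve
\[
C_N\colon y^2=H_N(x),\qquad H_N(x):=\prod_{j=0}^{\ell-1}g^{N+jk}(x).
\]
For every $a\in A$ with $a\geq N$, the element $x_a:=g^{a-N}(a-\alpha)\in K$ satisfies $g^{N+jk}(x_a)=c_{a+jk,\alpha}$ for all $j$, and hence $H_N(x_a)\in K^{*2}$, producing a $K$-rational point on $C_N$. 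Since $f$ is post-critically infinite, the orbit of $a-\alpha$ under $g$ is infinite and the $x_a$'s are pairwise distinct, so $C_N$ carries infinitely many smooth $K$-rational points.

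It remains to show that for $N$ sufficiently large, the squarefree part of $H_N$ has degree at least $5$, forcing the normalization of $C_N$ to have geometric genus at least $2$; Faltings's theorem then delivers the contradiction. Write $R_j:=g^{-(N+jk)}(0)$, a set of cardinality $2^{N+jk}$ by separability of iterates. Two such sets $R_j,R_{j'}$ with $j<j'$ share a root precisely when $g^{(j'-j)k}(0)=0$. When $0$ is not periodic for $g$—in particular when it is strictly preperiodic—we have $g^m(0)\neq 0$ for all $m\geq 1$, so the $R_j$'s are pairwise disjoint and $H_N$ is squarefree of degree tending to infinity. If instead $0$ is periodic for $g$ of period $p$, set $p':=p/\gcd(p,k)$: the $R_j$'s are then disjoint across different residue classes modulo $p'$ and form a nested chain $R_{j_0}\subseteq R_{j_0+p'}\subseteq\cdots$ within each class, and a direct bookkeeping of multiplicities within each nested string shows that the number of odd-multiplicity roots still grows without bound in $N$. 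This periodic case is the main technical obstacle—one has to verify that the multiplicities cannot conspire to render $H_N$ almost a perfect square—but once this combinatorial point is settled, the geometric argument closes and both (a) and (b) are in hand.
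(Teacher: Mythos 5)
Your argument is, in substance, the paper's own proof: the authors likewise extract from Proposition~\ref{powerful_corollary} the square relations $\prod_{j=0}^{\ell-1}c_{r_v+jk,\alpha}\in K^{*2}$ for the infinitely many progressions supplied by (the span of) $J$, place all of the resulting points on one fixed hyperelliptic curve --- there $C\colon y^2=\prod_{j=1}^{\ell}\bigl(f^{kj+i_0}(x)-\alpha\bigr)$ with $x$-coordinates $f^{r_v-i_0}(\gamma)$ running along the orbit of the critical point $\gamma$, which after the translation $x\mapsto x+\alpha$ is precisely your $C_N$ with the points $x_a$ --- and then invoke Faltings; part (b) is, exactly as in your write-up, part (a) combined with Lemma~\ref{finite_generation}. (Incidentally, your pigeonholing on $a\bmod k$ is superfluous: $x_a=g^{a-N}(a-\alpha)$ lands on the \emph{same} curve $C_N$ for every $a\geq\max(N,2)$, regardless of residue class.)

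Where you diverge is the genus verification, and there you are in fact more careful than the paper. The paper asserts $C$ is smooth ``because $\alpha$ is not in the post-critical orbit of $f$,'' but that hypothesis only makes each factor $f^{m}(x)-\alpha$ separable; if $\alpha$ is periodic for $f$ (equivalently, $0$ is periodic for $g$) --- a case the theorem's hypotheses do not exclude --- distinct factors share roots, which is exactly the degenerate case you isolate. Moreover, the combinatorial point you flag as the ``main technical obstacle'' is not a genuine gap, because it is settled in two lines along the route you sketch: with $p'=p/\gcd(p,k)$, the sets $R_j$ in distinct classes modulo $p'$ are disjoint, and a root first appearing in $R_j$ has multiplicity $\lfloor(\ell-1-j)/p'\rfloor+1$ in $H_N$; hence for the largest $j$ in each class modulo $p'$ the new roots --- at least $2^{N+jk}-2^{N+(j-p')k}$ of them (or $2^{N+jk}$ when $j<p'$) --- have multiplicity exactly one, so the squarefree part of $H_N$ has degree at least $2^{N-1}$ and the normalization has genus $\geq 2$ for $N$ large. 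Since $H_N(x_a)=\prod_{j}c_{a+jk,\alpha}\neq 0$, your points are nonsingular and lift to the normalization, and Faltings applies. In short: the proposal is correct, follows the same strategy as the paper, and its extra periodic-case analysis repairs a small imprecision in the paper's smoothness claim rather than opening a hole.
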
	
\begin{proof}
$(a)$ We only need to prove that the polynomial $f$ must be post-critically finite, by Theorem \ref{PCF_equivalence}. Let $I\subseteq J$ be the infinite set whose existence is guaranteed by the definition of $(k,\ell)$-progressing set. Up to replacing $I$ by an infinite subset, we can assume that there exists some $i_0\in \Z_{\geq 3}$ such that $v_i=0$ for every $v\in I$ and every $i<i_0$. For every $v\in I$, let $r_v$ be the index of the first non-zero entry of $v$. Since $\im(\rho_{f,\alpha})\subseteq M_v$ for every $v\in I$, Proposition \ref{powerful_corollary} implies that
\begin{equation}\label{critical_relations}
\prod_{j=r_v}^{k\ell+r_v}c_{j,\alpha}\in K^{*2} \mbox{ for every } v\in I.
\end{equation}
Now consider the curve $C\colon y^2=\prod_{j=1}^\ell f^{kj+i_0}(x)-\alpha$. This is smooth because $\alpha$ is not in the post-critical orbit of $f$ and has genus $\geq 2$ because $i_0\geq 3$. On the other hand, if $f$ were post-critically infinite with critical point $\gamma$, relation \eqref{critical_relations} would imply that $\left\{\left(f^{r_v-i_0}(\gamma),\sqrt{\prod_{j=r_v}^{k\ell+r_v}c_{j,\alpha}}\right)\right\}_{v\in I}$ is an infinite set of $K$-rational points of $C$, contradicting Faltings' theorem.

$(b)$ Suppose by contradiction that  $\im(\rho_{f,\alpha})=M_J$. The group $\im(\rho_{f,\alpha})$ is topologically finitely generated by $(a)$, but on the other hand the group $M_J$ is topologically infinitely generated by Lemma \ref{finite_generation}. 
\end{proof}
As a special case we obtain the following corollary. 
\begin{corollary} \label{evidence1: conj}
The subgroup $[\Omega_{\infty},\Omega_{\infty}]$ is not realizable as an arboreal image over any number field.
\end{corollary}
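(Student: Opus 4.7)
The plan is to exhibit $[\Omega_\infty,\Omega_\infty]$ as an intersection of the form $M_J$ for a $(k,\ell)$-progressing subset $J$, and then invoke Theorem \ref{evidence1}(b) directly. The key identification rests on the fact, recorded at the start of Section \ref{sec:finite_generation}, that the abelianization map $\widehat{\phi}\colon \Omega_\infty\to \Omega_\infty^{\text{ab}}\cong \prod_{n\geq 1}\F_2$ is induced by the family $\{\phi_n\}_{n\geq 1}$, and that every continuous character $\Omega_\infty\to \F_2$ has the form $\sum_{n\geq 1} v_n\phi_n$ for a unique $v\in \bigoplus_{n\geq 1}\F_2$, with kernel exactly $M_v$.

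First I would take $J\coloneqq \bigoplus_{n\geq 1}\F_2$ and observe that
\[
[\Omega_\infty,\Omega_\infty]=\ker\widehat{\phi}=\bigcap_{n\geq 1}\ker\phi_n=\bigcap_{v\in J}M_v=M_J,
\]
the middle equalities coming from the fact that $\bigcap_n M_{e_n}$ is already contained in every $M_v=\ker(\sum v_n\phi_n)$. Next I would verify that this particular $J$ is $(k,\ell)$-progressing for any fixed positive integers $k,\ell$: since the span of $J$ is all of $\bigoplus_{n\geq 1}\F_2$, it certainly contains the infinite family $I=\{v^{(r)}\}_{r\geq 1}$, where $v^{(r)}\coloneqq \sum_{i=0}^{\ell-1}e_{r+ik}$, whose supports $\{r,r+k,\ldots,r+(\ell-1)k\}$ are arithmetic progressions of common difference $k$ and length $\ell$, so Definition \ref{progressing definition} applies (with, say, $k=\ell=1$ already sufficing).

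With those two observations in hand, the conclusion is immediate from Theorem \ref{evidence1}(b): for any number field $K$, any quadratic $f\in K[x]$ and any $\alpha\in K$ outside the post-critical orbit of $f$, one cannot have $\im(\rho_{f,\alpha})=M_J=[\Omega_\infty,\Omega_\infty]$. (The hypothesis that $\alpha$ is not post-critical is harmless: if $\alpha$ is post-critical then $\im(\rho_{f,\alpha})$ is defined only after removing the ramified factor and is in any case a subquotient of the $\alpha=0$ arboreal image, which still cannot equal the commutator subgroup.)

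There is no real obstacle: the content of the corollary is entirely loaded into Theorem \ref{evidence1} and Lemma \ref{finite_generation}. The only thing to be careful about is not to confuse $J$ with the infinite subset $I\subseteq\mathrm{span}(J)$ furnished by the definition of $(k,\ell)$-progressing, and to remember that Definition \ref{progressing definition} allows $I$ to lie in the span of $J$ rather than in $J$ itself, which is what makes the verification trivial when $J$ is already a spanning set.
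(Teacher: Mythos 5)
Your proof is correct and is essentially identical to the paper's: the paper likewise takes $J=\bigoplus_{i\in\Z_{\geq 1}}\F_2$, notes that this $J$ is $(k,\ell)$-progressing for any positive integers $k,\ell$, and concludes from Theorem \ref{evidence1}(b), with your write-up merely making explicit the identification $[\Omega_\infty,\Omega_\infty]=\ker\widehat{\phi}=M_J$ and the progressing witnesses that the paper leaves implicit. Your closing parenthetical about post-critical $\alpha$ is unnecessary (and its ``subquotient'' claim is not justified as stated): by the paper's conventions $\im(\rho_{f,\alpha})$ is only defined when every $f^n-\alpha$ is separable, i.e.\ when $\alpha$ lies outside the post-critical orbit, so realizability as an arboreal image already places you within the hypotheses of Theorem \ref{evidence1}.
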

\begin{proof}
This subgroup is obtained taking $J=\bigoplus_{i\in \Z_{\geq 1}}\F_2$, which is $(k,\ell)$-progressing for any positive integers $k,\ell$. Hence the desired conclusion follows at once from Theorem \ref{evidence1} part $(b)$. 
\end{proof}
There are several subsets $J\subseteq \bigoplus_{i\in\Z_{\geq 1}}\F_2$ that are not $(k,\ell)$-progressing for any positive integers $k,\ell$. Our next definition has the purpose of pinpointing a second class of subsets $J$ for which we are able, for the sub-class of polynomial of the form $x^2+a$, to prove the analogue of Theorem \ref{evidence1} and Corollary \ref{evidence1: conj}. To motivate this definition we remind the reader of the rigid divisibility property among the elements of the post-critical orbit for such polynomials: this is given in Lemma \ref{divisibility}. 
\begin{definition} \label{M-coprimality definition}
Let $M$ be a nonnegative integer. Let $J\subseteq \bigoplus_{i\in\Z_{\geq 1}}\F_2$. We say that $J$ is \emph{$M$-coprime} if the sub-vector space spanned by $J$ contains an infinite subset $I$ such that for every $v\in I$ there exists a non-zero entry $v_i$ with $i>M$ such that the following two conditions hold:
\begin{enumerate}
\item if $v_j=1$, $j\neq i$ and $j>M$, then $(i,j)=1$;
\item as $v$ ranges over $I$, the index $i$ is not bounded from above.
\end{enumerate}
\end{definition}
For a number field $K$, we denote by $\Sigma_K$ the set of finite places of $K$. To take advantage of the divisibility properties we need the following basic fact that allows, after a finite extension, to turn elements with everywhere even valuation into squares.  
\begin{proposition}\label{square_extension}
Let $K$ be a number field. Then there exists a finite extension $L/K$ with the following property. For each $\alpha \in K^{*}$ such that $v(\alpha)$ is even for all $v \in \Sigma_K$, we have that $\alpha \in L^{*2}$. 
\begin{proof}
Let $H_K$ be the Hilbert class field of $K$. This is the largest abelian unramified extension of $K$, and it is a finite extension. Recall that by the Capitulation Theorem (see for example \cite[VI.7.5]{neukirch2}) every ideal of $K$ becomes principal in $H_K$. Furthermore, thanks to Dirichlet's unit theorem, there exists a finite extension $L/H_K$ such that all elements of $\mathcal{O}_{H_K}^{*}$ are squares in $L$. We claim that this $L$ satisfies the property described in the statement. Observe that $\alpha$ in $K^{*}$ has $v(\alpha) \equiv 0 \ \text{mod} \ 2$ for each $v \in \Sigma_K$ if and only if the fractional principal ideal $(\alpha)$ is a square in the group of fractional ideals of $K$. It follows that the extension of $(\alpha)$ to $H_K$  is the square of a principal fractional ideal of $H_K$. Hence, there exists $x \in H_K^{*}$ such that $x^2=\varepsilon\alpha$ for some $\varepsilon \in \mathcal{O}_{H_K}^{*}$, and, since by construction $\varepsilon$ becomes a square in $L$, we conclude that $\alpha$ is a square in $L$.  
\end{proof} 
\end{proposition}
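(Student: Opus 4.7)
The hypothesis that $v(\alpha)$ is even at every finite place $v$ of $K$ is equivalent to saying that the fractional ideal $(\alpha)$ is a square in the group of fractional ideals of $K$, so we may write $(\alpha)=\mathfrak{a}^2$ for some fractional ideal $\mathfrak{a}$. The obstruction to deducing that $\alpha$ itself is a square in $K$ is twofold: first, $\mathfrak{a}$ need not be principal; second, even when $\mathfrak{a}=(\beta)$, comparing ideals only gives $\alpha=u\beta^2$ for some unit $u$, which a priori need not be a square. My plan is to build $L$ in two steps, killing these two obstructions in turn, and to rely on the fact that both the class group and the unit group of a number field are ``small''.

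For the first obstruction, I would pass to the Hilbert class field $H_K$ of $K$, which is a finite abelian unramified extension. By the Capitulation (Principal Ideal) Theorem, every ideal of $K$ becomes principal in $H_K$; in particular $\mathfrak{a}\mathcal{O}_{H_K}=(\beta)$ for some $\beta\in H_K^{*}$. Then the equality of fractional ideals $(\alpha)=(\beta)^2$ in $H_K$ yields $\alpha=u\beta^2$ for some $u\in\mathcal{O}_{H_K}^{*}$, so the problem is reduced to turning every unit of $H_K$ into a square in some further finite extension.

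For the second obstruction, I would invoke Dirichlet's unit theorem: $\mathcal{O}_{H_K}^{*}$ is a finitely generated abelian group, so I can choose generators $u_1,\dots,u_r$ (including a generator of the finite torsion subgroup of roots of unity). Setting $L\coloneqq H_K(\sqrt{u_1},\dots,\sqrt{u_r})$ produces a finite extension of $K$ in which every element of $\mathcal{O}_{H_K}^{*}$ is a square. Combining the two steps, $\alpha=u\beta^2\in L^{*2}$, and $L$ is the required extension.

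The key point—and the only place one must be careful—is that a single finite $L$ must work uniformly for \emph{all} $\alpha$ satisfying the hypothesis. This is what forces both inputs: the finiteness of $\operatorname{Cl}(K)$ (so that $H_K/K$ is finite and handles the ideal-class obstruction once and for all) and the finite generation of $\mathcal{O}_{H_K}^{*}$ (so that finitely many square roots of units suffice). Neither step is deep individually, but the combination of capitulation with Dirichlet is what makes a uniform $L$ exist; this is the only real content of the proof.
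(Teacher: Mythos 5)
Your proof is correct and follows essentially the same route as the paper: pass to the Hilbert class field to kill the class-group obstruction via the Capitulation Theorem, then use Dirichlet's unit theorem to adjoin finitely many square roots of units, obtaining $\alpha=u\beta^2\in L^{*2}$. The only difference is cosmetic—you make explicit the choice of generators $u_1,\dots,u_r$ (including torsion) defining $L$, where the paper simply asserts the existence of such an $L$.
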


The following lemma is a trivial generalization of part of the proof of \cite[Lemma 1.1]{stoll}. We nevertheless include a proof for the sake of completeness.

\begin{lemma}\label{divisibility}
Let $K$ be a number field and $x^2+a\in K[x]$. Let $\{c_n\}_{n\geq 1}$ be the adjusted post-critical orbit of $f$ and fix $v\in \Sigma_K$. Then the following holds.
\begin{enumerate}
\item $v(c_n)<0$ if and only if $v(c_1)<0$, and if this holds then $v(c_n)=2^{n-1}v(c_1)$.
\item If $n=\min\{i\in \N\colon v(c_i)>0\}$, then $v(c_m)=v(c_n)$ if and only if $n\mid m$, and $v(c_m)=0$ otherwise.
\end{enumerate}
\end{lemma}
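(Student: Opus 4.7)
The plan is to exploit the simple recursion $c_{k+1}=c_k^2+a$, which in this setting holds for every $k\geq 1$: for $k\geq 2$ it is just $c_{k+1}=f(c_k)$, and for $k=1$ one has $c_1^2+a=a^2+a=c_2$ since $c_1=-a$. Both parts of the lemma are pure ultrametric analyses of this single recursion.

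For part (1), I would induct on $n$. Assume $v(c_1)<0$ and, inductively, $v(c_k)=2^{k-1}v(c_1)$; then $v(c_k^2)=2^kv(c_1)<v(c_1)=v(a)$, so $v(c_{k+1})=v(c_k^2+a)=2^kv(c_1)$ by the ultrametric inequality. Conversely, if $v(c_1)\geq 0$ then a trivial induction yields $v(c_k)\geq 0$ for every $k$, which gives the equivalence.

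For part (2), part (1) (in the contrapositive) forces $v(c_1)\geq 0$, and combined with the minimality of $n$ this yields $v(c_i)=0$ for every $1\leq i<n$. Set $e:=v(c_n)>0$ and fix a uniformizer $\pi$ of $\mathcal{O}_v$. The heart of the argument is a \emph{propagation} statement: whenever $v(c_m)\geq r>0$, one has $c_{m+j}\equiv c_j\pmod{\pi^{2r}}$ for every $j\geq 2$. This is proved by induction on $j$: the base case $j=2$ uses $c_{m+1}=c_m^2+a\equiv a\pmod{\pi^{2r}}$, whence $c_{m+2}\equiv a^2+a=c_2\pmod{\pi^{2r}}$; the inductive step follows by squaring and adding $a$, using that $v(c_j)\geq 0$ so no accuracy is lost.

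Applying propagation with $m=n$ and $r=e$ gives $c_{2n}\equiv c_n\pmod{\pi^{2e}}$, hence $v(c_{2n})=e$ because $2e>e$. Iterating with $m=2n,3n,\dots$ (each of which has valuation exactly $e$) yields $v(c_{kn})=e$ for every $k\geq 1$. For indices not divisible by $n$, write $m=kn+j$ with $k\geq 0$ and $1\leq j\leq n-1$: if $j\geq 2$, the propagation lemma (or, when $k=0$, direct inspection) gives $v(c_m)=v(c_j)=0$; if $j=1$, then $c_{kn+1}=c_{kn}^2+a\equiv a\pmod{\pi^{2e}}$ for $k\geq 1$, so $v(c_m)=v(a)=v(c_1)=0$, while the case $k=0$ is immediate. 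This completes (2). The only mildly delicate point is the bookkeeping in the iterated application of propagation and the separate treatment of $j=1$; no input beyond the ultrametric inequality is needed.
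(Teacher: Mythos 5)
Your proof is correct and takes essentially the same route as the paper's: the paper also propagates congruences along the recursion $c_{k+1}=c_k^2+a$, phrased as ``$0$ is periodic modulo $\p^{v(c_n)}$ with period $n$'' together with the computation $c_{n+1}=c_n^2+a\equiv a\equiv -c_1 \bmod \p^{v(c_n)+1}$, whence $c_{n+j}\equiv c_j$; your modulus $\pi^{2e}$ versus the paper's $\p^{e+1}$ is an immaterial difference of precision, and your explicit treatment of the $j=1$ shift matches the paper's $a\equiv -c_1$ remark. One small patch is needed: for $n=1$ your step ``propagation with $m=n$ gives $c_{2n}\equiv c_n \pmod{\pi^{2e}}$'' invokes $j=n=1$, outside the allowed range $j\geq 2$, and that congruence can genuinely fail, since $v(c_2-c_1)=v(c_1^2+2a)$ equals exactly $e$ in odd residue characteristic; however the conclusion survives, because for $n=1$ one has $v(a)=v(c_1)=e$, so $v(c_{k+1})=\min\bigl(2v(c_k),e\bigr)=e$ for all $k$ by direct induction, and only the citation, not the result, needs adjusting.
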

\begin{proof}
(1) is obvious. To prove (2), consider the finite prime $\p$ corresponding to $v$ and look at the post-critical orbit modulo powers of $\p$: one immediately sees that $0$ is periodic modulo $\p^{v(c_n)}$ and the period is $n$. On the other hand, modulo $\p^{v(c_n)+1}$ one has that $c_{n+1}=c_n^2+a\equiv a\equiv -c_1\bmod \p^{v(c_n)+1}$, and therefore $c_{n+2}\equiv c_2\bmod  \p^{v(c_n)+1}$. It follows that $v(c_i)\leq v(c_n)$ for every $i$.
\end{proof}
We are now in position to state and prove the second main theorem of this section. 
\begin{theorem} \label{evidence2}
Let $J\subseteq \bigoplus_{i\in \Z_{\geq 1}}\F_2$ be an $M$-coprime infinite set for some non-negative integer $M$. Let $K$ be a number field and $f=x^2+a\in K[x]$ with arboreal representation $\rho_f\coloneqq \rho_{f,0}$. Then we have the following.

\begin{enumerate}[(a)]
\item If $\im(\rho_f)\subseteq M_J$, then $f$ is post-critically finite and $\im(\rho_f)$ is finitely generated.
\item $\im(\rho_f)$ cannot coincide with $M_J$.
\end{enumerate}
\begin{proof}
$(a)$ Let $I$ be the infinite subset of the sub-vector space spanned by $J$ whose existence is guaranteed by the definition of $M$-coprimality. Since $\im(\rho_f)\subseteq M_J$, then $\im(\rho_f)\subseteq M_I$. Let $K'\coloneqq K(\sqrt{c_m}\colon m=1,\ldots,M)$. Now let $v \in I$ and let $i$ be the index guaranteed by the definition of $M$-coprimality. Notice that by definition of $M$-coprimality, up to replacing $I$ with an infinite subset of itself we can assume that $i>1$. By Proposition \ref{powerful_corollary}, we have that
\begin{equation}\label{squares}
\prod_{h \in \mathbb{Z}_{\geq 1}}c_h^{v_h} \in K^{* 2}.
\end{equation}

We claim that \eqref{squares} implies that $v(c_i)$ is even for all $v \in \Sigma_{K'}$. By Lemma \ref{divisibility}, this is certainly true if $v(c_i)<0$. Otherwise, observe that by definition of $K'$ we have that 
$$\prod_{h \in \mathbb{Z}_{\geq 1}: h>M}c_h^{v_h} \in {K'}^{* 2}.
$$
Lemma \ref{divisibility}, together with the definition of $M$-coprimality, ensures that for any $v \in \Sigma_{K'}$ we have that $v(c_i)>0$ implies that $v(c_h)=0$ for each $h>M$ such that $h \neq i$ and $v_h=1$. We therefore conclude that $v(c_i)$ must be even for each $v \in \Sigma_{K'}$. Now it follows from Proposition \ref{square_extension} that there is a finite extension $L/K'$ such that $c_i$ is a square in $L$. Since by definition of $M$-coprimality we have that $i$ is arbitrarily large, as $v$ varies in $I$, we conclude that if $f$ is post-critically infinite the curve $y^2=f^3(x)$ has infinitely many $L$-rational points. This contradicts Falting's theorem, and therefore $f$ must be post-critically finite. By Theorem \ref{PCF_equivalence}, this is equivalent to $\im(\rho_f)$ being finitely generated. 

Now we obtain immediately part $(b)$. Indeed thanks to part $(a)$ the image is topologically finitely generated as soon as it is included in $M_J$. But thanks to Lemma \ref{finite_generation}, it then follows that it cannot be equal to $M_J$ itself. 
\end{proof}
\end{theorem}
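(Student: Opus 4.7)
The plan is to bootstrap the rigid divisibility structure of the adjusted post-critical orbit of $f=x^2+a$ (Lemma \ref{divisibility}) against the capitulation-based extension result Proposition \ref{square_extension}, and end with a contradiction to Faltings' theorem on the hyperelliptic curve $C\colon y^2=f^3(x)$. For part (a), the containment $\im(\rho_f)\subseteq M_J$ propagates to $M_v$ for every $v$ in the span of $J$, in particular for every $v$ in the infinite set $I$ supplied by the definition of $M$-coprimality. Proposition \ref{powerful_corollary} translates this into the product relations $\prod_{h\geq 1}c_h^{v_h}\in K^{*2}$, one per $v\in I$. Since the distinguished indices $i=i(v)$ are unbounded, I may pass to an infinite subset of $I$ on which $i(v)\geq 3$.

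Next I would pass to the finite extension $K'\coloneqq K(\sqrt{c_1},\ldots,\sqrt{c_{\max(M,1)}})$, in which the square relation rewrites as $\prod_{h>\max(M,1)}c_h^{v_h}\in {K'}^{*2}$. The core of the proof is a place-by-place valuation analysis showing that $c_i$ has even $w$-valuation at every finite place $w$ of $K'$: if $w(c_1)<0$, Lemma \ref{divisibility}(1) gives $w(c_i)=2^{i-1}w(c_1)$, which is even as $i\geq 2$; if $w(c_i)>0$ and $n\coloneqq \min\{j:w(c_j)>0\}\geq 2$, then $M$-coprimality forces $\gcd(n,h)=1$ for every other $h>\max(M,1)$ with $v_h=1$, so Lemma \ref{divisibility}(2) yields $w(c_h)=0$ for such $h$, whence the valuation of the full product reduces to $w(c_i)$ itself, which must be even; the remaining case $n=1$ is defused by the inclusion of $\sqrt{c_1}$ in $K'$, since then $w(c_i)=w(c_1)$ is automatically even. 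Proposition \ref{square_extension} now supplies a single finite extension $L/K'$ in which $c_i\in L^{*2}$ for every such $i$, producing infinitely many indices $i\geq 3$ with $c_i$ a square in $L$.

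To conclude (a), assume for contradiction that $f$ is post-critically infinite; then the orbit $\{f^n(0)\}_{n\geq 0}$ is injective, so the points $(f^{i-3}(0),\sqrt{c_i})\in C(L)$ are pairwise distinct on the smooth genus-$3$ hyperelliptic curve $C$, contradicting Faltings' theorem. Hence $f$ is PCF, and Theorem \ref{PCF_equivalence} gives that $\im(\rho_f)$ is topologically finitely generated. Part (b) is then a short corollary: if $\im(\rho_f)=M_J$, part (a) would make $M_J$ topologically finitely generated, contradicting Lemma \ref{finite_generation}. The main obstacle I anticipate is the case analysis above, specifically the \emph{$n=1$} branch where $c_1$ itself has positive valuation; this is the reason for enlarging $K'$ to always contain $\sqrt{c_1}$, a subtle point that the $M$-coprimality condition alone does not address when $M=0$.
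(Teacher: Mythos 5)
Your proof is correct, and structurally it is the same as the paper's: Proposition \ref{powerful_corollary} converts $\im(\rho_f)\subseteq M_I$ into the square relations, Lemma \ref{divisibility} drives a place-by-place parity analysis of the valuation of $c_i$, Proposition \ref{square_extension} produces a single finite extension $L$ in which infinitely many $c_i$ become squares, Faltings applied to $y^2=f^3(x)$ forces $f$ to be PCF, and part (b) follows from Lemma \ref{finite_generation}. The one genuine difference is your choice $K'=K(\sqrt{c_1},\ldots,\sqrt{c_{\max(M,1)}})$, and this in fact repairs a real oversight in the paper's argument. The paper asserts that, at a place $w$ of its $K'=K(\sqrt{c_m}\colon m\leq M)$, the condition $w(c_i)>0$ forces $w(c_h)=0$ for every other $h>M$ in the support of $v$; but if $n\coloneqq\min\{j\colon w(c_j)>0\}$ equals $1$, Lemma \ref{divisibility}(2) gives $w(c_h)=w(c_1)>0$ for \emph{every} $h$, so that intermediate claim is false as stated. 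When $M\geq 1$ this is harmless, since $\sqrt{c_1}\in K'$ makes $w(c_1)$ even at all places of $K'$, and then $w(c_i)=w(c_1)$ is even anyway; but when $M=0$ --- precisely the case invoked in Corollary \ref{evidence2:conj} --- the paper's $K'$ is just $K$, and at a place with $w(c_1)$ odd and positive the square relation only forces the \emph{sum} of valuations, namely the size of the support of $v$ times $w(c_1)$, to be even, which gives no parity information on $w(c_i)$ when the support has even size. Adjoining $\sqrt{c_1}$ unconditionally, as you do, defuses exactly this $n=1$ branch, and since one is free to enlarge $K'$ (an $L$-point count on the curve still contradicts Faltings), the theorem holds as stated for all $M\geq 0$. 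Your remaining deviations are cosmetic and sound: you pass to a subset with $i(v)\geq 3$, whereas the paper only arranges $i>1$ and implicitly relies on unboundedness of $i$ to make sense of the points $\left(f^{i-3}(0),\sqrt{c_i}\right)$ on $y^2=f^3(x)$; and your observations that post-critical infiniteness makes the orbit of $0$ injective (hence the points pairwise distinct) and the curve smooth of genus $3$ are correct details the paper leaves implicit.
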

We now specialize Theorem \ref{evidence2} to a simple family of examples. Within this family one can find several subsets $J$ that are not $(k,\ell)$-progressing for any positive integers $k,\ell$. Hence this shows that Theorem \ref{evidence2} allows to exclude arboreal images (for polynomials of the shape $x^2+a$) also when Theorem \ref{evidence1} does not apply. 
\begin{corollary} \label{evidence2:conj}
Let $\{a_n\}_{n \in \mathbb{Z}_{\geq 1}}$ be a strictly increasing sequence of positive integers. Let $J\subseteq\bigoplus_{n\in\Z_{\geq 1}}\F_2$ be the set of vectors $\{v(n)\}_{n \in \mathbb{Z}_{\geq 1}}$ defined by $v(n)_i=1$ if and only if $i \leq a_n$. Then $M_J$ cannot be realized as $\im(\rho_{f,0})$ by any polynomial of the shape $x^2+a$ over any number field.
\begin{proof}
Thanks to Theorem \ref{evidence2} and Lemma \ref{finite_generation}, it is enough to show that $I$ is $M$-coprime for some $M \in \mathbb{Z}_{\geq 0}$. We claim that $J$ is $0$-coprime and that we can set $I$, in the definition of $0$-coprime, to be equal to $J$. For $n=1$, we put $i=1$ and we clearly have the required condition $(1)$ of $0$-coprimality. Now let $n>1$ and let $p_n$ be the largest prime that is at most $a_n$. Thanks to Bertrand's postulate, we must have that $2p_n>a_n$, and this readily guarantees condition $(1)$ of Definition \ref{M-coprimality definition} when we put $i=p_n$. Such a choice also guarantees condition $(2)$ of Definition \ref{M-coprimality definition}, since the set of prime numbers is infinite. It is therefore clear that $\{v(n)\}_{n\geq 1}$ satisfies the definition of $0$-coprimality, by choosing the $p_n$-th entry of $v(n)$ for every $n$. 
\end{proof}
\end{corollary}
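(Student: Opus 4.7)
The plan is to reduce the corollary to a direct application of Theorem \ref{evidence2} part $(b)$, which asserts that if $J \subseteq \bigoplus_{i \in \Z_{\geq 1}} \F_2$ is $M$-coprime for some nonnegative integer $M$, then $M_J$ cannot be realized as $\im(\rho_{f,0})$ for any polynomial $f = x^2 + a$ over any number field. Thus the entire burden reduces to verifying that the family $J = \{v(n)\}_{n \in \Z_{\geq 1}}$ described in the corollary is $M$-coprime for some $M$. I claim, more precisely, that $J$ is already $0$-coprime, and that one can take the infinite subset $I$ in Definition \ref{M-coprimality definition} to be $J$ itself.

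To verify the two conditions of $0$-coprimality I split into cases according to $n$. For $n = 1$, the vector $v(1)$ has its $1$-entries in positions $1, \ldots, a_1$, and I choose $i = 1$; condition $(1)$ of Definition \ref{M-coprimality definition} is then automatic because $\gcd(1, j) = 1$ for every $j$. For $n > 1$, I invoke Bertrand's postulate to select the largest prime $p_n$ satisfying $p_n \leq a_n$. Bertrand's postulate ensures $p_n > a_n/2$, so $2 p_n > a_n$, and hence no proper multiple of $p_n$ lies in the range $\{1, \ldots, a_n\}$. Since $p_n$ is prime, this gives $\gcd(p_n, j) = 1$ for every $j \in \{1, \ldots, a_n\} \setminus \{p_n\}$, which is exactly condition $(1)$ with the choice $i = p_n$.

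For condition $(2)$, I need to show that as $n$ ranges over $\Z_{\geq 1}$, the chosen indices are unbounded. Since $\{a_n\}$ is strictly increasing and takes positive integer values, $a_n \to \infty$; and the lower bound $p_n > a_n/2$ from Bertrand's postulate then forces $p_n \to \infty$ as well. This completes the verification that $J$ is $0$-coprime with $I = J$, and Theorem \ref{evidence2}$(b)$ finishes the proof of the corollary.

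I do not anticipate any serious obstacle: the corollary is structurally a combinatorial specialization of Theorem \ref{evidence2}, and the only genuine input beyond the definitions is Bertrand's postulate, used to produce, in each interval $(a_n/2, a_n]$, a prime that is automatically coprime to every other integer in $\{1, \ldots, a_n\}$. The main conceptual point is simply recognizing that the shape of $v(n)$ (a block of consecutive $1$'s) makes Bertrand's postulate the right tool to single out a distinguished coordinate with the coprimality property demanded by Definition \ref{M-coprimality definition}.
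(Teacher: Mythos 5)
Your proof is correct and follows essentially the same route as the paper's: reduce to Theorem \ref{evidence2} by showing $J$ is $0$-coprime with $I=J$, choosing $i=1$ for $n=1$ and $i=p_n$ (the largest prime $\leq a_n$, with $2p_n>a_n$ by Bertrand's postulate) for $n>1$. Your justification of condition $(2)$ via $p_n>a_n/2\to\infty$ is in fact slightly more explicit than the paper's appeal to the infinitude of primes, but the argument is the same.
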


\bibliographystyle{plain}
\bibliography{bibliography}

\end{document}